\titleformat{\section}{\large\bfseries\centering}{\thesection}{1em}{}
\titleformat{\subsection}{\it\centering\bfseries}{\thesubsection}{0.5em}{}
\titleformat{\subsubsection}[runin]{\bfseries}{\thesubsubsection.}{0.4em}{}[.]
\DeclareMathOperator{\dive}{div}
\def\p{\partial}
\def\wideubar{\underaccent{{\cc@style\underline{\mskip10mu}}}}
\def\Wideubar{\underaccent{{\cc@style\underline{\mskip8mu}}}}
\def\widebar{\accentset{{\cc@style\underline{\mskip10mu}}}}
\def\Widebar{\accentset{{\cc@style\underline{\mskip8mu}}}}
\newcommand*\xbar[1]{%
	\hbox{%
		\vbox{%
			\hrule height 0.4pt 
			\kern0.3ex
			\hbox{%
				\kern-0.1em
				\ensuremath{#1}%
				\kern-0.1em
			}%
		}%
	}%
}
\newcommand{\VERTiii}[1]{{\left\vert\kern-0.3ex\left\vert\kern-0.3ex\left\vert #1
		\right\vert\kern-0.3ex\right\vert\kern-0.3ex\right\vert}}
\newcommand{\VERT}{\vert\kern-0.3ex\vert\kern-0.3ex\vert}
\newcommand{\VERTl}{\left\vert\kern-0.3ex\left\vert\kern-0.3ex\left\vert}
\newcommand{\VERTr}{\right\vert\kern-0.3ex\right\vert\kern-0.3ex\right\vert}
\newcommand{\VERTbig}{\big\vert\kern-0.3ex\big\vert\kern-0.3ex\big\vert}
\newcommand{\VERTBig}{\Big\vert\kern-0.3ex\Big\vert\kern-0.3ex\Big\vert}
\definecolor{DarkRed}{RGB}{139,0,0}
\definecolor{Purple}{RGB}{128,0,128}
\numberwithin{equation}{section}
\newtheorem{lemma}{Lemma}[section]
\newtheorem{proposition}[lemma]{Proposition}
\newtheorem{theorem}{Theorem}[section]
\newtheorem{definition}{Definition}[section]
\newcommand\w[1]{\makebox[2em]{$#1$}}
\newcommand\z[1]{\makebox[3em]{$#1$}}
\begin{document}
	
	\title{\bf Stability of Multidimensional Thermoelastic Contact Discontinuities\let\thefootnote\relax\footnotetext{The research of {\sc Gui-Qiang G. Chen} was supported in part by
the UK Engineering and Physical Sciences Research Council Awards
EP/E035027/1 and EP/L015811/1,
and the Royal Society--Wolfson Research Merit Award (UK).
The research of {\sc Paolo Secchi} was supported in part by
the Italy MIUR Project PRIN 2015YCJY3A-004.
The research of {\sc Tao Wang} was supported in part by
the National Natural Science Foundation of China Grants
11971359 and 11731008.}
	}

	\author{
		{\sc Gui-Qiang~G.~Chen}\thanks{e-mail: chengq@maths.ox.ac.uk}\\
		{\footnotesize Mathematical Institute, University of Oxford, Oxford, OX2 6GG, UK}
		\\[2mm]
		{\sc Paolo~Secchi}\thanks{e-mail: paolo.secchi@unibs.it}\\
		{\footnotesize DICATAM, Mathematical Division, University of Brescia, 25133 Brescia, Italy}\\[2mm]
		{\sc Tao~Wang}\thanks{e-mail: tao.wang@whu.edu.cn}\\
		{\footnotesize School of Mathematics and Statistics, Wuhan University, Wuhan, 430072, China}
	}
	
	\date{\today}

	\maketitle
	

	\begin{abstract}
		We study the system of nonisentropic thermoelasticity describing the motion of thermoelastic nonconductors of heat in two and three spatial dimensions, where the frame-indifferent constitutive relation generalizes that for compressible neo-Hookean materials. Thermoelastic contact discontinuities are characteristic discontinuities for which the velocity is continuous across the discontinuity interface. Mathematically, this renders a nonlinear multidimensional hyperbolic problem with a characteristic free boundary. We identify a stability condition on the piecewise constant background states and establish the linear stability of thermoelastic contact discontinuities in the sense that the variable coefficient linearized problem satisfies {\it a priori} tame estimates in the usual Sobolev spaces under small perturbations. Our tame estimates for the linearized problem do not break down when the strength of thermoelastic contact discontinuities tends to zero. The missing normal derivatives are recovered from the estimates of several quantities relating to physical involutions. In the estimate of tangential derivatives, there is a significant new difficulty, namely the presence of characteristic variables in the boundary conditions. To overcome this difficulty, we explore an intrinsic cancellation effect, which reduces the boundary terms to an instant integral. Then we can absorb the instant integral into the instant tangential energy by means of the interpolation argument and an explicit estimate for the traces on the hyperplane.

		\vspace{4mm}
		\noindent{\bf Keywords}: 
		\quad
		Nonisentropic thermoelasticity, \ \
		Thermoelastic contact discontinuity, \ \
		Characteristic free boundary, \ \
		Linear stability, \ \
		Cancellation
		
		\vspace{2mm}
		\noindent{\bf Mathematics Subject Classification (2010)}:\quad
		35L65, 
		74J40,   
		74A15,  
		35L67,  
		35Q74,  
		74H55,  
		35R35 
	\end{abstract}
	
	\tableofcontents

	
	\section{Introduction}

We study the equations of nonisentropic thermoelasticity in the Eulerian coordinates,
governing the evolution of thermoelastic nonconductors of heat in two and three spatial dimensions.
The constitutive relation under consideration
generalizes that for {\it compressible neo-Hookean materials}
(see {\sc Ciarlet} \cite[p.\;189]{C88MR936420})
and satisfies the necessary {\it frame indifference principle}
(see {\sc Dafermos} \cite[\S 2.4]{D16MR3468916}).
This system can be reduced to a symmetrizable hyperbolic system
on account of the divergence constraints.

Our main interest concerns the stability of {\it thermoelastic contact discontinuities}
that are piecewise smooth, weak solutions with the discontinuity interface,
across which
the mass does not transfer and the velocity is continuous.
The boundary matrix for the free boundary problem
of thermoelastic contact discontinuities
is always singular on the discontinuity interface.
In other words, thermoelastic contact discontinuities are
characteristic discontinuities to the system of thermoelasticity.
As is well-known, characteristic discontinuities, along with shocks and rarefaction waves,
are building blocks of general entropy solutions
of multidimensional hyperbolic systems of conservation laws (see, {\it e.g.}, {\sc Chen--Feldman} \cite{CF18MR3791458}).
Therefore, it is important to analyze the stability of thermoelastic contact discontinuities
when the initial thermodynamic process and interface are perturbed
from the piecewise constant background state.
Mathematically, this renders a nonlinear hyperbolic initial-boundary value problem
with a characteristic free boundary.

Our work is motivated by the results on
3D compressible current-vortex sheets \cite{CW08MR2372810, T09MR2481071},
2D MHD contact discontinuities \cite{MTT15MR3306348,MTT18MR3766987},
and 2D compressible vortex sheets in elastodynamics \cite{CHW17MR3628211,CHW19}.
For ideal compressible magnetohydrodynamics (MHD),
there are two types of characteristic discontinuities:
compressible current-vortex sheets and MHD contact discontinuities,
corresponding respectively to $H\cdot N|_{\varGamma}= 0$ and $H\cdot N|_{\varGamma}\neq 0$,
where $H$ is denoted as the magnetic field, $\varGamma$ as the discontinuity interface,
and $N$ as the spatial normal to $\varGamma$.
{\sc Chen--Wang} \cite{CW08MR2372810,CW12MR3289359} and {\sc Trakhinin} \cite{T09MR2481071}
established  the nonlinear stability of 3D compressible current-vortex sheets independently,
indicating the stabilization effect of non-paralleled magnetic fields
to the motion of 3D compressible vortex sheets.
The local existence of 2D MHD contact discontinuities
was proved by {\sc Morando et al.}~\cite{MTT15MR3306348,MTT18MR3766987}
under the Rayleigh--Taylor sign condition
on the jump of the normal derivative of the pressure through a series of delicate energy estimates.
Notice that the extension of the results in \cite{MTT15MR3306348,MTT18MR3766987}
to 3D MHD contact discontinuities is still a difficult open problem.
For the system of thermoelasticity,
{\sc Chen et al.}~\cite{CHW17MR3628211,CHW19}
recently obtained the linear stability  of the 2D isentropic compressible vortex sheets
associated with the boundary constraint:
${\bm F}\cdot N|_{\varGamma}=0$ for the deformation gradient ${\bm F}$,
by developing the methodology in {\sc Coulombel--Secchi} \cite{CS04MR2095445}.
Comparing with the aforementioned two types of characteristic discontinuities in MHD,
we naturally introduce and investigate the thermoelastic contact discontinuities that correspond to
${\bm F}\cdot N|_{\varGamma}\ne 0$.

The goal of this paper is to explore the stabilizing mechanism in thermoelasticity
such that the thermoelastic contact discontinuities are stable.
More precisely,
{we identify a stability condition on the piecewise constant background states
and establish the linear stability of thermoelastic contact discontinuities in the sense that
the variable coefficient linearized problem satisfies appropriate {\it a priori} tame estimates
under small perturbations.}
In particular, our tame estimates do not break down
when the strength of thermoelastic contact discontinuities tends to zero.
{As far as we know, this is the first rigorous result on the stability of thermoelastic contact discontinuities in the mathematical theory of thermoelasticity.}

In general, for hyperbolic problems with a characteristic boundary,
there is a loss of control on the derivatives
(precisely, on the normal derivatives of the characteristic variables) in {\it a priori} energy estimates.
To overcome this difficulty, it is natural to introduce the Sobolev spaces with conormal regularity,
where two tangential derivatives count as one normal derivative
(see {\sc Secchi}~\cite{S96MR1405665}  and the references therein).
However, for our problem,
we manage to work in  the usual Sobolev spaces,
since the missing normal derivatives of the characteristic variables
can be recovered from the estimates of several quantities relating to the physical constraints.

In the estimate of tangential derivatives, there is a significant new difficulty,
namely the presence of characteristic variables in the boundary conditions,
which is completely different from the previous works such as
\cite{CSW19MR3925528,CW08MR2372810, T09MR2481071,CW12MR3289359,
	MTT15MR3306348,CHW19,CHW17MR3628211,CS04MR2095445}.
New ideas are required to control the boundary integral term
arising in the estimate of tangential derivatives
owing to the complex nature of the boundary conditions.
To address this issue, we utilize a combination of the boundary conditions and
the restriction of the interior equations on the boundary
to exploit an intrinsic cancellation effect.
This cancellation enables us to reduce the boundary term into
the sum of the error term $\mathcal{R}_2$  ({\it cf.}~\eqref{R2.def})
and the instant boundary integral term $\mathcal{R}_3$ ({\it cf.}~\eqref{Q.est1}).

To establish the energy estimates uniform in the strength of the thermoelastic
contact discontinuity for
$\mathcal{R}_2$ and $\mathcal{R}_3$,
we cannot use the boundary conditions for the spatial derivatives of the discontinuity function $\psi$,
owing to the dependence of the coefficients on the strength ({\it cf.}\;\eqref{ELP3.b.4}).
In order to overcome this difficulty, we develop an idea
from {\sc Trakhinin} \cite[Proposition 5.2]{T18MR3721411}
and explore new identities and estimates for the derivatives of $\psi$
with the aid of the interpolation argument.
We make the estimate of $\mathcal{R}_3$ differently for the cases
whether it contains a time derivative.
More precisely, we first consider the case with at least one time derivative.
Thanks to the restriction of the interior equations on the boundary,
the time derivative of the deformation gradient in $\mathcal{R}_3$
can be transformed into the tangential space derivatives of the velocity
({\it cf.}\;\eqref{key2b}).
As a result, the estimate of traces on the hyperplane
({\it cf.}\;Lemma~\ref{lem.trace2}) can be applied
to control the primary term $\mathcal{R}_{31}$
({\it cf.}\;\eqref{R3a.est1}).
Employing the identities and estimates
for the normal derivative of the noncharacteristic variables,
we can reduce the estimate of the instant tangential energy
into that with one less time derivative and one more tangential spatial derivative
({\it cf.}\;\eqref{E.tan.est3}).
Then we are led to deal with the case containing the space derivatives.
For this case,
we derive estimates \eqref{E.tan.est4b} and \eqref{R3.est3d}
by means of the identities and estimates for linearized quantities $(\eta,\zeta)$
({\it cf.}\;\eqref{eta}--\eqref{eta.est'}) and Lemma~\ref{lem.trace2}.
With these estimates in hand,
we can finally obtain the desired estimate for all the tangential derivatives
under the stability condition \eqref{thm.H1} on the background state.
The methods and techniques developed here may be also helpful for other problems
involving similar difficulties.

It is worth noting that
our tame estimates are with a {\it fixed} loss of derivatives
with respect to the source terms and coefficients.
As such,
the local existence and nonlinearly structural stability of thermoelastic contact discontinuities
could be achieved
with resorting to a suitable Nash--Moser iteration scheme as in \cite{CS08MR2423311,CSW19MR3925528}.

Let us also mention some recent results on
the classical solutions and weak--strong uniqueness for
the system of polyconvex thermoelasticity.
{\sc Christoforou et al.}~\cite{CGT18MR3910194} enlarge
the equations of polyconvex thermoelasticity
into a symmetrizable hyperbolic system,
which yields
the local existence of classical solutions of the Cauchy problem
by applying the general theory in \cite[Theorem 5.4.3]{D16MR3468916}.
The convergence in the zero-viscosity limit
from thermoviscoelasticity to thermoelasticity is also provided in \cite{CGT18MR3910194}
by virtue of the relative entropy formulation developed in \cite{CT18MR3799089}.
Moreover, {\sc Christoforou et al.}~\cite{CGT18MR3910194,CGT19MR4026976}
establish the weak--strong uniqueness property
in the classes of entropy weak and measure-valued solutions.

The rest of this paper is organized as follows:
In Section \ref{Sec.Nonlinear},
we introduce the system of thermoelasticity in the Eulerian coordinates,
which can be symmetrizable hyperbolic, via the divergence constraints.
Then we formulate the free boundary problem
and the reduced problem in a fixed domain for thermoelastic contact discontinuities.
It should be pointed out that
no thermoelastic contact discontinuity is possible
for the isentropic process ({\it cf.}~Proposition \ref{pro1.1}).
Section \ref{Sec.Bas} is devoted to stating the main theorem of this paper, Theorem~\ref{thm2}.
Before that, based on an alternative form of the boundary operator,
we deduce the variable coefficient linearized problem around the basic state
(that is, a small perturbation of the stationary thermoelastic contact
discontinuity satisfying suitable constraints).
In Section \ref{sec.Preliminary}, we collect some properties of the Sobolev functions
and notations for later use, including the definitions of fractional Sobolev spaces and norms,
the estimates of the traces on the hyperplane, and the Moser-type calculus inequalities.
To show Theorem~\ref{thm2},
in Section~\ref{sec.Homogenization}, we reduce
the effective linear problem to a problem with homogeneous boundary conditions.
Section~\ref{sec.normal} is dedicated to the proof of Proposition \ref{lem.normal},
{\it i.e.}, the estimate of normal derivatives.
More precisely,
we estimate the noncharacteristic variables $W_{\rm nc}$
and entropies $S^{\pm}$ in \S \ref{sec.normal1}--\S \ref{sec.entropy},
recover the missing $L^2$-norm of
$\p_1 \mathrm{D}_{\rm tan}^{\beta} W_{1}$  and $\p_1 \mathrm{D}_{\rm tan}^{\beta} W_{jd+i+1}$
for $1\leq i \leq d$ and $2\leq j\leq d$
in \S \ref{sec.W1}--\S \ref{sec.normal4},
and complete the proof of Proposition \ref{lem.normal}
by  finite induction in \S \ref{sec.normal5}.
Let us remark that
quantities $\varsigma$, $\eta$, and $\zeta$
({\it cf.}~definitions \eqref{varsigma}, \eqref{eta}, and \eqref{zeta})
are introduced and estimated
to compensate the loss of
the normal derivatives of characteristic variables $W_1$ and $W_{jd+i+1}$.
In Section \ref{sec.Tangential},
we deduce the estimate of tangential derivatives, {\it i.e.}, Proposition \ref{lem.tan}.
For this purpose, we start with the standard energy estimate to introduce
the boundary term $Q$ ({\it cf}.~\eqref{Q.def}) and the instant tangential
energy $\mathcal{E}_{\rm tan}^{\beta}(t)$  ({\it cf}.~\eqref{E.tan.def}).
We present the intrinsic cancellation for $Q$ in \S \ref{sec.tan2}.
Then the boundary integral term can be reduced to
the sum of $\mathcal{R}_2$ (the error term, defined by \eqref{R2.def}) and
$\mathcal{R}_3$ (the instant boundary integral term, {\it cf}.~\eqref{Q.est1}).
After that, we deduce the estimate of $\mathcal{R}_2$ in \S \ref{sec.R2} and
the estimate of $\mathcal{R}_3$ in \S \ref{sec.tan4.a}--\S\ref{sec.tan4.c}.
Proposition \ref{lem.tan} is proved, respectively,
for the two- and three-dimensional cases at the end of \S \ref{sec.tan4.b}
and \S \ref{sec.tan4.c}.
With Propositions \ref{lem.normal} and \ref{lem.tan} in hand,
we conclude the proof of the main theorem in Section \ref{sec.Proof}.
Propositions~\ref{pro1.1} and \ref{pro1} are shown
in Appendices~\ref{App.A} and \ref{App.B}, respectively.

\section{Formulation of the Nonlinear Problems}\label{Sec.Nonlinear}
In this section, we introduce the system of thermoelasticity in the Eulerian coordinates
and formulate the nonlinear problems for thermoelastic contact discontinuities.

\subsection{Equations of Motion}
In the context of elastodynamics,
a body is identified with an open subset $\mathcal{O}$
of the reference space $\mathbb{R}^d$ for $d=2,3$.
A motion of the body over a time interval $(t_1,t_2)$
is a Lipschitz mapping ${x}$ of $(t_1,t_2)\times\mathcal{O}$ to  $\mathbb{R}^d$
such that ${x}(t,\cdot)$ is a bi-Lipschitz homeomorphism of $\mathcal{O}$ for each $t$ in $(t_1,t_2)$.
Every particle $X$ of body $\mathcal{O}$ is deformed to the spatial position $x(t, X)$ at time $t$.

The {\it velocity} $\tilde{v}\in\mathbb{R}^d$ with $i$-th component $\tilde{v}_i$
and the {\it deformation gradient}  $\widetilde{\bm{F}}\in \mathbb{M}^{d\times d}$
with $(i,j)$-th entry $\widetilde{F}_{ij}$ are defined by
\begin{align*}
\tilde{v}_i(t,X):=\frac{\p x_i}{\p t}(t,X), \qquad
\widetilde{F}_{ij}(t,X):=\frac{\p x_i}{\p X_j}(t,X),
\end{align*}
respectively,
where $\mathbb{M}^{m\times n}$ stands for the vector space of real $m \times n$ matrices.
We assume that
map $x(t,\cdot):\mathcal{O}\to\mathbb{R}^d$ is  orientation-preserving so that
\begin{align} \label{detF>0}
\det \widetilde{\bm{F}}(t,\cdot)>0\qquad\, \textrm{in $\mathcal{O}$}.
\end{align}
The compatibility between fields $\tilde{v}$ and $\widetilde{\bm{F}}$ is expressed by
\begin{align}
\label{F.eq.L}
\frac{\p \widetilde{F}_{ij}}{\p t }(t,X)=\frac{\p \tilde{v}_i}{\p {X_j} }(t,X)
\qquad\,  \textrm{for }       i,j=1,\ldots,d.
\end{align}
We need to append the constraints:
\begin{align} \label{inv1.L}
\frac{\p \widetilde{F}_{ij}}{\p X_k}=\frac{\p \widetilde{F}_{ik}}{\p X_j}
\qquad\, \textrm{for }    i,j,k=1,\ldots,d,
\end{align}
in order to guarantee that $\widetilde{\bm{F}}$ is a gradient.
We emphasize that
constraints \eqref{inv1.L} are {\it involutions} to the system of thermoelasticity,
meaning that
constraints \eqref{inv1.L}
are preserved by the evolution via relations \eqref{F.eq.L}, provided that
they hold at the initial time
(see {\sc Dafermos} \cite{D86MR846895}).

We will work in the Eulerian coordinates $(t,x)$.
For convenience,
let us denote by $v=(v_1,\ldots,v_d)^{\mathsf{T}}$ the velocity
and by $\bm{F}=(F_{ij})$ the deformation gradient
in the Eulerian coordinates so that
\begin{align}\notag
v_i(t,x)=\tilde{v}_i(t,X(t,x))\qquad\, {F}_{ij}(t,x)=\widetilde{{F}}_{ij}(t,X(t,x)),
\end{align}
where $X(t,x)$ is the inverse map of $x(t,X)$ for each fixed $t$.

The system of thermoelasticity modeling the motion of thermoelastic nonconductors of heat
consists of the kinematic relations:
\begin{align}\label{F.eq.E}
(\p_t +v_\ell\p_\ell){F}_{ij}=\p_\ell v_i {F}_{\ell j}\qquad\,\,  \textrm{for }   i,j=1,\ldots,d,
\end{align}
and the following conservation laws of mass, linear momentum, and energy (see \cite[\S 2.3]{D16MR3468916}):
\begin{align} \label{conser.laws}
\left\{
\begin{aligned}
&\p_t\rho+\p_{\ell}(\rho v_{\ell})=0,  \\
&\p_t(\rho v_i)+\p_{\ell} (\rho v_{\ell}v_i) = \p_{\ell}  {T}_{i\ell}\qquad\qquad   \textrm{for }   i=1,\ldots,d,\\
&\p_t(\rho \varepsilon+ \tfrac{1}{2}\rho |v|^2)+\p_{\ell}  ((\rho \varepsilon+ \tfrac{1}{2}\rho |v|^2)v_{\ell})=\p_{\ell}  (v_jT_{j\ell}),
\end{aligned}
\right.
\end{align}
where $\p_t:=\frac{\p}{\p t}$ and $\p_\ell:=\frac{\p}{\p x_\ell}$ represent the partial differentials,
$\rho$ is the (spatial) density related with reference density $\rho_{\rm ref}>0$ through
\begin{align} \label{rho.relation.E}
\rho =\rho_{\rm ref}\, (\det \bm{F})^{-1},
\end{align}
symbol  $T_{ij}$ denotes the $(i,j)$-th entry of the Cauchy stress tensor $\bm{T}\in\mathbb{M}^{d\times d}$,
and $\varepsilon$ is the (specific) internal energy.
Equations \eqref{F.eq.E} are directly from the compatibility relations \eqref{F.eq.L}.
In the Eulerian coordinates, constraints \eqref{inv1.L} are reduced to
\begin{align} \label{inv1.E}
F_{\ell k}\p_{\ell} F_{ij}=F_{\ell j}\p_{\ell} F_{ik}\qquad\,  \textrm{for }   i,j,k=1,\ldots,d,
\end{align}
which are the involutions of system \eqref{F.eq.E}--\eqref{conser.laws};
see {\sc Lei--Liu--Zhou} \cite[Remark 2]{LLZ08MR2393434} for instance.
Throughout this paper, we adopt the Einstein summation convention
whereby a repeated index in a term implies the summation over all the values of that index.

For every given thermoelastic medium,
the following constitutive relations hold
(see {\sc Coleman--Noll} \cite{CN63MR153153}):
\begin{align} \notag
\varepsilon=\varepsilon(\bm{F},{ S }),\quad\,
\bm{T} =\bm{T} ^{\mathsf{T}}
= \rho \frac{\p\varepsilon(\bm{F},{ S })}{\p \bm{F}}\bm{F}^{\mathsf{T}},
\quad\, \vartheta :=\frac{\p\varepsilon(\bm{F},{ S })}{\p { S }}>0,
\end{align}
where ${ S }$ and $\vartheta$ represent the (specific) entropy and the (absolute) temperature, respectively.
In this paper, we consider the internal energy functions of the form:
\begin{align}
\varepsilon(\bm{F},{ S })=\sum_{i,j=1}^d \frac{a_{j}}{2}  F_{ij} ^2+e(\rho,{ S }),
\label{varepsilon}
\end{align}
where $a_j$, for $j=1,\ldots,d$, are positive constants.
In view of \eqref{rho.relation.E},
the internal energy $\varepsilon(\bm{F},{ S })$ depends on the deformation gradient $\bm{F}$
only through $\bm{F}^{\mathsf{T}}\bm{F}$. Hence,
relation \eqref{varepsilon} is {\it frame-indifferent}:
\begin{align*}
\varepsilon(\bm{F},{ S })=\varepsilon(\bm{Q}\bm{F},{ S })
\end{align*}
for all $\bm{Q}\in \mathbb{M}^{d\times d}$ with $\bm{Q}\bm{Q}^{\mathsf{T}}=\bm{I}_d$ and $\det \bm{Q}=1$.
Here and below, $\bm{I}_m$ denotes the identity matrix of order $m$.
Moreover, the constitutive relation \eqref{varepsilon}  generalizes
that for the {\it compressible neo-Hookean materials}
(see \cite[p.\;189]{C88MR936420}) to the nonisentropic thermoelasticity.
A direct computation gives
\begin{align}\label{T.bm}
\bm{T}= \rho \bm{F} \mathrm{diag}\,(a_1,\ldots,a_d)\bm{F}^{\mathsf{T}}-p \bm{I}_d,
\end{align}
with
\begin{align}\label{p.def}
p:= \rho^2\frac{\p e(\rho,{ S })}{\p \rho}, \qquad\, \vartheta=\frac{\p e(\rho,{ S })}{\p S}>0,
\end{align}
where $p=p(\rho,{ S })$ is the pressure.
The speed of sound $c=c(\rho,{ S })$ is assumed to satisfy
\begin{align}\label{c.def}
c(\rho,{ S }):=\sqrt{p_{\rho}(\rho,{ S })}>0\qquad\, \textrm{for } \rho>0.
\end{align}
If all of $a_j$ are the same, then the material is {\it isotropic};
otherwise it is {\it anisotropic}
(see \cite[\S 3.4]{C88MR936420}).
In the special case when all of $a_j$ are equal to zero,
{system \eqref{conser.laws} is reduced to the compressible Euler equations in gas dynamics.}
Since this paper concerns the effect of elasticity to the evolution of materials,
we set without loss of generality that $a_j=1$ for all $j$.
We refer to
\cite[Chapters 3--4]{C88MR936420} and \cite[Chapter 2]{D16MR3468916}
for a thorough discussion of the constitutive relations.

For simplicity, the reference density $\rho_{\rm ref}$ is supposed to be unit,
leading to
\begin{gather} \label{inv2.E}
\dive(\rho {F}_j):=
\p_{\ell} (\rho F_{\ell j})=0\qquad\,\textrm{for $j=1,\ldots, d$},
\end{gather}
where ${F}_j $ stands for the $j$-th column of $\bm{F}$; see, {\it e.g.}, \cite[Remark 1]{LLZ08MR2393434}.
By virtue of the divergence constraints \eqref{inv2.E},
we can reformulate \eqref{F.eq.E} and \eqref{inv1.E} into the conservation laws:
\begin{alignat}{2}
\label{F.eq.conser}&\p_t(\rho {F}_{ij})+\p_{\ell}(\rho {F}_{ij}v_{\ell}- v_i \rho {F}_{\ell j})=0
\qquad&&\textrm{for }    i,j=1,\ldots,d,\\[1mm]
\label{inv1.conser}&\p_{\ell}(\rho F_{\ell k}F_{ij}-\rho F_{\ell j}F_{ik} )=0
\quad&& \textrm{for }  i,j,k=1,\ldots,d.
\end{alignat}
In light of \eqref{p.def}--\eqref{inv2.E}, for smooth solutions,
system \eqref{F.eq.E}--\eqref{conser.laws}  is  equivalent to
\begin{subequations}
\label{sym.form}
 \begin{alignat}{2}
 \label{sym.form.1}&(\partial_t+v_{\ell}\p_{\ell})p+\rho c^2\p_{\ell} v_{\ell}=0,&&\\[0.5mm]
 \label{sym.form.2}&\rho (\partial_t+v_{\ell}\p_{\ell}) v_i+\p_i p-    \rho {F}_{ \ell k } \p_{\ell } {F}_{i k }=0
 \qquad &&\textrm{for } i=1,\ldots,d,\\[0.5mm]
 \label{sym.form.3}&   \rho (\partial_t+v_{\ell}\p_{\ell}) {F}_{ij}-   \rho {F}_{\ell j} \p_{\ell} v_i=0
 &&\textrm{for }  i,j=1,\ldots,d,\\[0.5mm]
 \label{sym.form.4} & (\partial_t+v_{\ell}\p_{\ell}) { S }=0.&&
 \end{alignat}
\end{subequations}
Let us take $U=(p,v,{F}_{1},\ldots,{F}_{d},{ S })^{\mathsf{T}}$ as the primary unknowns
and define the following symmetric matrices:
\setlength{\arraycolsep}{3pt}
\begin{align}
\label{A0.def}
A_0(U)&:=\mathrm{diag}\big( 1/(\rho c^2),\,\rho \bm{I}_{d+d^2}, \,1\big),\\[2mm]
\label{Aj.def}
A_i(U)&:= \begin{pmatrix}
v_i/(\rho c^2)  &  \bm{e}_i^{\mathsf{T}}&0&\cdots&0&0\\[0.5mm]
\bm{e}_i& \rho v_i \bm{I}_d&- \rho {F}_{i1}\bm{I}_{d}&\cdots&- \rho {F}_{id}\bm{I}_{d}&0\\[0.5mm]
0&- \rho {F}_{i1}\bm{I}_{d}&   &  & &0\\[-0.5mm]
\vdots &\vdots &  & \rho v_i \bm{I}_{d^2}& &\vdots\\[0.5mm]
0&- \rho {F}_{id}\bm{I}_{d}& &  &   &0\\[0.5mm]
\w{0}&\w{0}&\w{0}&\w{\cdots}&\w{0}&\w{v_i}
\end{pmatrix}
\end{align}
for $i=1,\ldots, d,$
where we denote $\bm{e}_i:=(\delta_{i,1},\ldots,\delta_{i,d})^{\mathsf{T}}$
with $\delta_{i,j}$ being the Kronecker delta.
Then system \eqref{sym.form} reads
\begin{align} \label{vec.form}
A_0(U)\partial_t U+ A_{i} (U)\partial_{i}U=0,
\end{align}
which is symmetric hyperbolic, due to \eqref{rho.relation.E} and \eqref{c.def}.

\subsection{Thermoelastic Contact Discontinuities}
Let $U$ be smooth on each side of a smooth hypersurface
$\varGamma(t):=\{x\in\mathbb{R}^d: x_1=\varphi(t,x')\}$ for $x':=(x_2,\ldots,x_d)$:
\begin{align}\label{U.form}
U(t,x)=\begin{cases}
U^+(t,x)\qquad \textrm{in }\ \varOmega^+(t):=\{x\in\mathbb{R}^d: x_1>\varphi(t,x') \},\\[0.5mm]
U^-(t,x)\qquad \textrm{in }\ \varOmega^-(t):=\{x\in\mathbb{R}^d: x_1<\varphi(t,x') \},
\end{cases}
\end{align}
where $U^{\pm}(t,x)$ are smooth functions in respective domains $\varOmega^{\pm}(t)$.
Then $U$ is a weak solution of \eqref{conser.laws} and \eqref{inv2.E}--\eqref{inv1.conser}
if and only if it is a smooth solution of
\eqref{inv1.E}, \eqref{inv2.E}, and \eqref{vec.form} in domains $\varOmega^{\pm}(t)$,
and the following Rankine--Hugoniot jump conditions hold
at every point of front $\varGamma(t)$:
\begin{subequations}
	\label{RH1}
	\begin{alignat}{3}
	\label{RH1.a}  &[m_{N}]=0,&&\\[0.5mm]
	\label{RH1.b}  &[m_{N}v]+[\rho {F}_{\ell N} {F}_{\ell }]=N[p],&&\\[0.5mm]
	\label{RH1.c}  &[m_{N}(\varepsilon+\tfrac{1}{2}|v|^2)]+[\rho  F_{\ell N}  F_{\ell}\cdot v ]= [p v_N ],\quad &&\\[0.5mm]
	\label{RH1.d}  &[m_{N}{F}_{ij}]+[\rho {F}_{jN}v_i]=0 &&\textrm{for }   i,j=1,\ldots,d,\\[0.5mm]
	\label{inv3.E}  &[\rho {F}_{jN}]=0&&\textrm{for }   j=1,\ldots,d,\\[0.5mm]
	\label{inv4.E}  &[\rho F_{kN}F_{ij}-\rho F_{jN} {F}_{ik}]=0 &&\textrm{for }  i,j,k=1,\ldots,d,
	\end{alignat}
\end{subequations}
where $[g]:=(g^+-g^-)|_{\varGamma(t)}$ stands for the jump across $\varGamma(t)$, and
\begin{align*}
v_N^{\pm}:=v^{\pm}\cdot N,\quad\,\,
{F}_{jN}^{\pm}:={F}_{j}^{\pm}\cdot N,\quad\,\,
m_{N}^{\pm}:=\rho ^{\pm}(\p_t\varphi-v_N^{\pm})
\end{align*}
with $N:=(1,-\p_2\varphi,\ldots,-\p_d\varphi)^{\mathsf{T}}$,
so that $m^\pm_N$ represent the mass transfer {fluxes}.
Also see \cite[\S 3.3]{D16MR3468916}
for the corresponding jump conditions written in the Lagrangian description.

We are interested in discontinuous weak solutions $U$
for which the mass does not {transfer} across the discontinuity interface $\varGamma(t)$:
\begin{align} \label{m.N}
m_N ^{\pm}=\rho ^{\pm}(\p_t\varphi-v_N^{\pm})= 0\qquad\, \textrm{on $\varGamma(t)$}.
\end{align}
Then the matrix:
\begin{align*}
&\big(\p_t\varphi A_0(U)-N_{\ell}A_{\ell}(U)\big)\big|_{\varGamma(t)} \\[1mm] &\quad 
=\left.\begin{pmatrix}
0 &  -N^{\mathsf{T}}&0&\cdots &0&0\\[0.5mm]
-N& \bm{O}_d& \rho  {F}_{1N}\bm{I}_{d}&\cdots & \rho  {F}_{dN} \bm{I}_{d}&0\\[0.5mm]
0& \rho  {F}_{1N}\bm{I}_{d}& &  & &0\\[-0.5mm]
\vdots & \vdots&  & \bm{O}_{d^2}& &\vdots\\[0.5mm]
0& \rho  {F}_{dN}\bm{I}_{d}&  & & &0\\[0.5mm]
\w{0}&\w{0}&\w{0}&\w{\cdots}&\w{0}&\w{0}
\end{pmatrix}\right|_{\varGamma(t)}
\end{align*}
has eigenvalues
\begin{alignat*}{4}
&&&\pm\sqrt{|N|^2+ \rho^2 {F}_{\ell N} {F}_{\ell N}}\qquad  &&  \textrm{with multiplicity 1},\\
&&&\pm \rho\sqrt{ {F}_{\ell N}  {F}_{\ell N}}  &&\textrm{with multiplicity $d-1$},\\
&&&\ 0  &&\textrm{with multiplicity $d^2-d+2$},
\end{alignat*}
where $\bm{O}_m$ denotes the zero matrix of order $m$.
As a result, the boundary matrix on $\varGamma(t)$:
\begin{align*}
A_{\rm bdy}
:=\mathrm{diag}\big(  \p_t\varphi A_0(U^{+})-N_{\ell}A_{\ell}(U^{+}),
-\p_t\varphi A_0(U^{-})+N_{\ell}A_{\ell}(U^{-})\big)\big|_{\varGamma(t)}
\end{align*}
is singular, which implies that the free boundary $\varGamma(t)$ is {\it characteristic}.
In this sense, the weak solution $U$ is a {\it characteristic discontinuity}.

We now reformulate the jump conditions \eqref{RH1} by means of assumption \eqref{detF>0}.
More precisely, from \eqref{detF>0}, we derive
\begin{align} \label{F.N}
\begin{pmatrix}
{F}_{1N}^{\pm}\\[-0.5mm] \vdots \\  {F}_{dN}^{\pm}
\end{pmatrix}
=
\begin{pmatrix}
{F}_{11}^{\pm}\\[-0.5mm] \vdots \\  {F}_{1d}^{\pm}
\end{pmatrix}
-\sum_{\ell=2}^d\p_{\ell}\varphi
\begin{pmatrix}
{F}_{\ell 1}^{\pm}\\[-0.5mm] \vdots \\   {F}_{\ell d}^{\pm}
\end{pmatrix}
\neq 0 \qquad\,\, \textrm{on }\   \varGamma(t).
\end{align}
Consequently, the boundary matrix $A_{\rm bdy}$ on $\varGamma(t)$
has $2d$ negative, $2d$ positive, and $2(d^2-d+2)$ zero eigenvalues.
Since one more boundary condition is needed to determine the unknown interface function $\varphi$,
the correct number of boundary conditions is $2d+1$,
according to the well-posedness theory for hyperbolic problems.
Plugging involutions \eqref{inv3.E} and condition \eqref{m.N} into \eqref{RH1.d} leads to
\begin{align}\notag
{F}_{jN}^+ [v]=0\qquad  \textrm{on $\varGamma(t)$, for $j=1,\ldots,d$.}
\end{align}
Then it follows from \eqref{F.N} that $[v]=0$ on $\varGamma(t)$.
We employ \eqref{inv3.E} and \eqref{m.N} again to rewrite \eqref{RH1.a}--\eqref{RH1.d} as
\begin{equation} \label{BC.E.general}
\p_t\varphi=v_{N}^+, \quad\,\,
[v]=0,
\quad\,\,
\rho^+ {F}_{\ell N}^+[{F}_{\ell}]=N[p] \qquad\,\,  \textrm{on $\varGamma(t)$}.
\end{equation}

\begin{definition} \label{def1.TCD}
A {thermoelastic contact discontinuity} is
a discontinuous weak solution of form \eqref{U.form}
of system \eqref{conser.laws} and \eqref{inv2.E}--\eqref{inv1.conser}
with the boundary conditions \eqref{BC.E.general}.

\end{definition}
We exclude \eqref{inv3.E}--\eqref{inv4.E} from  \eqref{BC.E.general} in order to
prescribe the correct number of boundary conditions for the well-posedness
of the thermoelastic contact discontinuity problem.
On one hand, \eqref{inv3.E}--\eqref{inv4.E}  are involutions inherited from the initial data.
On the other hand, they prevent any thermoelastic contact discontinuity in the {\it isentropic} process.
More generally, we have the following physically relevant result whose proof is postponed to Appendix \ref{App.A}.

\begin{proposition} \label{pro1.1}
	If $[{ S }]=0$ on $\varGamma(t)$, then $[{ U }]=0$ on $\varGamma(t)$ so that
	no thermoelastic contact discontinuity exists.
\end{proposition}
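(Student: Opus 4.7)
The plan is to combine involutions \eqref{inv3.E}--\eqref{inv4.E} with the remaining jump condition in \eqref{BC.E.general} to extract a rank-one structure for $[\bm{F}]$, and then exploit the monotonicity of the equation of state at fixed entropy. Since $[v]=0$ and $[S]=0$ are already known, it suffices to prove that $[p]=0$ and $[F_{ij}]=0$ for all $i,j$.

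First I would set $\alpha_j := \rho^+ F_{jN}^+ = \rho^- F_{jN}^-$, which is well defined on $\varGamma(t)$ thanks to involution \eqref{inv3.E}; the vector $\alpha := (\alpha_1,\ldots,\alpha_d)^{\mathsf{T}}$ is nonzero because \eqref{F.N} together with $\rho^\pm>0$ forces $|\alpha|^2>0$. Rewriting involution \eqref{inv4.E} in terms of $\alpha$ gives $\alpha_k[F_{ij}] = \alpha_j[F_{ik}]$ for all indices $i,j,k$; for each fixed $i$, this makes the row $([F_{i1}],\ldots,[F_{id}])$ parallel to $\alpha$, so $[F_{ij}]=\beta_i\alpha_j$ for some scalars $\beta_i$. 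Substituting into the boundary condition $\rho^+F_{\ell N}^+[F_\ell]=N[p]$, written componentwise as $\alpha_\ell[F_{i\ell}]=N_i[p]$, yields $|\alpha|^2\beta_i=N_i[p]$, and hence the rank-one jump formula
\begin{align*}
[F_{ij}] \;=\; \frac{[p]}{|\alpha|^2}\,N_i\alpha_j.
\end{align*}

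Next I would derive a compatibility identity by evaluating $[F_{\ell N}]$ in two ways: directly from $F_{\ell N}^\pm=\alpha_\ell/\rho^\pm$ it equals $\alpha_\ell[1/\rho]$, while the rank-one formula gives $\sum_i N_i[F_{i\ell}] = |N|^2\alpha_\ell[p]/|\alpha|^2$; choosing any $\ell$ with $\alpha_\ell\neq 0$ and cancelling produces $[1/\rho] = (|N|^2/|\alpha|^2)\,[p]$. Finally I would invoke the hypothesis $[S]=0$: writing $S^+=S^-=:S_0$, the assumption $p_\rho = c^2 > 0$ from \eqref{c.def} makes $\rho \mapsto p(\rho,S_0)$ strictly increasing and $\rho \mapsto 1/\rho$ strictly decreasing, so $[p]\cdot[1/\rho] \leq 0$ with equality iff $[\rho]=0$. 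But the identity just derived forces $[p]\cdot[1/\rho] = |N|^2[p]^2/|\alpha|^2 \geq 0$, and these two inequalities can coexist only when $[p]=0$; this then gives $[\rho]=0$ and, via the rank-one formula, $[F_{ij}]=0$ for all $i,j$, whence $[U]=0$ on $\varGamma(t)$.

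The main obstacle, as I see it, is recognizing that involution \eqref{inv4.E} is exactly what encodes the rank-one structure of $[\bm{F}]$; once that algebraic observation is in place, the thermodynamic monotonicity at fixed entropy closes the argument almost automatically. A subsidiary point worth verifying carefully is that $|\alpha|^2>0$ throughout $\varGamma(t)$, which is ensured by the non-degeneracy \eqref{F.N} built into the orientation-preserving hypothesis \eqref{detF>0}.
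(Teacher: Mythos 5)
Your proof is correct, and it rests on the same ingredients as the paper's own argument in Appendix~\ref{App.A}: dotting the jump condition $\rho^+F_{\ell N}^+[F_\ell]=N[p]$ with $N$ and using \eqref{inv3.E} to obtain the compatibility identity $|N|^2[p]=|\alpha|^2[1/\rho]$, then invoking \eqref{c.def} at fixed entropy to conclude $[p]=[\rho]=0$, and finally using \eqref{inv4.E} to kill $[\bm{F}]$. The one organizational difference worth noting: you extract the rank-one representation $[F_{ij}]=[p]N_i\alpha_j/|\alpha|^2$ from the involutions and the boundary condition \emph{before} knowing $[p]=0$, which lets the conclusion $[\bm{F}]=0$ drop out uniformly in $d$ once $[p]=0$ is established; the paper instead proves $[p]=[\rho]=0$ first and then treats $d=2$ and $d=3$ separately (your argument is essentially its $d=3$ cross-product step, done dimension-independently), so your ordering slightly streamlines the case analysis without changing the substance. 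Your subsidiary checks ($\alpha\neq 0$ from \eqref{F.N} and $\rho^\pm>0$, and the sign argument $[p][1/\rho]\le 0$ versus $[p][1/\rho]=|N|^2[p]^2/|\alpha|^2\ge 0$) are exactly the points that need care, and they are handled correctly.
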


If condition \eqref{detF>0} is ignored on interface $\varGamma(t)$,
then there is another type of characteristic discontinuities
for \eqref{conser.laws} and \eqref{inv2.E}--\eqref{inv1.conser}
with the constitutive relation \eqref{T.bm},
{\it i.e.}, the so-called {\it compressible vortex sheets}
that are associated with the boundary constraints
$({F}_{1N}^{\pm},\ldots,{F}_{dN}^{\pm})|_{\varGamma(t)}= 0$.
In this case, the jump conditions \eqref{RH1}  are reduced to
\begin{align}\notag
\p_t\varphi-v_{N}^+=\p_t\varphi-v_{N}^-=[p]=0\qquad\, \textrm{on $\varGamma(t)$}.
\end{align}
Then the normal velocity and pressure are continuous across front $\varGamma(t)$,
while the tangential components of the velocity can  undergo a jump.
See
\cite{CHW17MR3628211,CHW19}
for the two-dimensional isentropic case  in this regard.

In this paper, we focus on
the thermoelastic contact discontinuity problem corresponding to
the boundary constraints:
\begin{align} \label{inv5.E}
{F}_{1 N}^{\pm}\neq 0, \quad  {F}_{2N}^{\pm}=\cdots={F}_{dN}^{\pm}=0
 \qquad \textrm{on $\varGamma(t)$}.
\end{align}
Then the boundary conditions \eqref{BC.E.general} on $\varGamma(t)$ become
\begin{align}\label{BC.E}
\left\{\begin{aligned}
&\p_t\varphi-v_{N}^+=0,\quad &&[v]=0,\\[0.5mm]
& \rho^+ {F}_{1N}^+[{F}_{11}]=[p], \quad
&&[{F}_{11}\p_i\varphi+F_{i1}]=0\quad \textrm{for $i=2,\ldots, d$}.
\end{aligned}\right.
\end{align}
By virtue of \eqref{inv5.E}, involutions \eqref{inv4.E} are equivalent to
\begin{align} \label{inv4.E'}
[F_j]=0\qquad \textrm{on $\varGamma(t)$, for $j=2,\ldots,d.$}
\end{align}
Since $\varphi$ describing the discontinuity front $\varGamma(t)$
is one of the unknowns,
the thermoelastic contact discontinuity problem is a {\it free boundary problem}.

Taking into account the Galilean invariance of \eqref{inv1.E}, \eqref{inv2.E}, \eqref{vec.form}, and \eqref{RH1},
we choose the following piecewise constant thermoelastic contact discontinuity
as the background state:
\begin{align}
\label{background}
\bar{\varphi}=0,\qquad
\widebar{U}(x)=
\left\{\begin{aligned}
&\widebar{U}^+:=(\bar{p}^+,0,\widebar{\bm{F}}^+, \widebar{ S }^+)\quad &\textrm{if } x_1>0,\\
&\widebar{U}^-:=(\bar{p}^-,0, \widebar{\bm{F}}^-, \widebar{ S }^-)\quad &\textrm{if } x_1<0,
\end{aligned}\right.
\end{align}
where $\widebar{\bm{F}}^{\pm}=\mathrm{diag}\big(\widebar{F}_{11}^{\pm},
 \widebar{F}_{22}, \ldots,  \widebar{F}_{dd}\big)$ and
\begin{align} \label{background1}
\bar{p}^{\pm}=p( \bar{\rho}^{\pm},\widebar{ S }^{\pm} ),\quad
  \bar{\rho}^{+}\widebar{F}_{11}^+[\widebar{F}_{11}]= [\bar{p}]
 \qquad\,\, \textrm{for $\bar{\rho}^{\pm}:=(\det \widebar{\bm{F}}^{\pm})^{-1}$},
\end{align}
in keeping with \eqref{rho.relation.E}, \eqref{p.def}, and \eqref{inv5.E}--\eqref{inv4.E'}.
Without loss of generality,  we assume  that
the principal stretches $\widebar{F}_{11}^{\pm},$ $\widebar{F}_{22},$ $\ldots,$ $\widebar{F}_{dd}$
are positive constants with
$\widebar{F}_{11}^{+}>\widebar{F}_{11}^{-}$.
We point out that
each of the background deformations
is either a {\it dilation} or a {\it simple extension}
when $\widebar{F}_{22}=\cdots=\widebar{F}_{dd}$
(see {\sc Truesdell--Toupin} \cite[\S 43--\S 44]{TT60MR0118005}).

\subsection{Reduced Problem in a Fixed Domain}

It is more convenient to convert the free boundary problem for thermoelastic contact discontinuities into
a problem in a fixed domain.
To this end, we replace unknowns $U^{\pm}$, being smooth in $\varOmega^{\pm}(t)$, by
\begin{align} \label{transform}
U^{\pm}_{\sharp}(t,x):=U(t,\varPhi^{\pm}(t,x),x'),
\end{align}
where we take the lifting functions $\varPhi^{\pm}$
as in {\sc M{{\'e}}tivier} \cite[p.\;70]{M01MR1842775} to have the form:
\begin{align}     \label{varPhi}
\varPhi^{\pm}(t,x):=\pm x_1+\chi(\pm x_1)\varphi(t,x'),
\end{align}
with $\chi\in C^{\infty}_0(\mathbb{R})$ satisfying
\begin{align} \label{chi}
\chi\equiv 1\quad \textrm{on $[-1,1]$},  \qquad\,\,
\|\chi'\|_{L^{\infty}(\mathbb{R})} <1.
\end{align}
The cut-off function $\chi$ is introduced as in \cite{M01MR1842775,MTT18MR3766987}  to
avoid the assumption in the main theorem that the initial perturbations have compact support.
This change of variables is admissible on the time interval $[0,T]$
as long as $\|\varphi\|_{L^{\infty}([0,T]\times\mathbb{R}^{d-1})}\leq \frac{1}{2}$.

The existence of thermoelastic contact discontinuities
amounts to constructing solutions $U^{\pm}_{\sharp}$,
which are smooth in the fixed domain $\varOmega:=\{x\in\mathbb{R}^d:x_1>0\}$,
of the following initial-boundary value problem:
\begin{subequations}   \label{TCD0}
\begin{alignat}{2}
\label{TCD0.a}
&\mathbb{L}(U^{\pm},\varPhi^{\pm}) :=L(U^{\pm},\varPhi^{\pm})U^{\pm} =0
&  \qquad &\textrm{if }   x_1>0,\\
\label{TCD0.b}
&\mathbb{B}(U^{+},U^{-},\varphi)=0
& \qquad  &\textrm{if }  x_1=0,\\
\label{TCD0.c}
&(U^{+},U^{-},\varphi)=(U^{+}_0,U^{-}_0,\varphi_0)
& \qquad &\textrm{if } t=0,
\end{alignat}
\end{subequations}
where index $``\sharp"$ has been dropped for notational simplicity.
Thanks to transformation \eqref{transform}, operator $L(U,\varPhi)$ is given by
\begin{align}
\label{L.def}
L(U,\varPhi):=A_0(U)\partial_t+\widetilde{A}_1(U,\varPhi)\partial_1+\sum_{i=2}^d A_i(U)\partial_i,
\end{align}
where $A_i(U)$, for $i=0,\ldots, d$, are defined by \eqref{A0.def}--\eqref{Aj.def}, and
\begin{align}  \notag
\widetilde{A}_1(U, \varPhi):=
\frac{1}{\partial_1\varPhi}\Big(A_1(U)-\partial_t\varPhi A_0(U)-\sum_{i=2}^d\partial_i\varPhi A_i(U)\Big).
\end{align}
According to \eqref{BC.E}, the boundary operator $\mathbb{B}$ reads
\begin{align}
\label{B.bb.def}
\mathbb{B}(U^+,U^-,\varphi):=
\begin{pmatrix}
\partial_t\varphi-v_N^+ \\[0.5mm]
[v] \\[0.5mm]
 [p]-\rho^+{F}_{1N}^+ [{F}_{11}]\\[0.5mm]
[{F}_{11}\p_2\varphi+{F}_{21}]\\
\vdots\\
[{F}_{11}\p_d\varphi+{F}_{d1}]
\end{pmatrix}.
\end{align}

{The boundary matrix
$\mathrm{diag}(-\widetilde{A}_1(U^+, \varPhi^+),-\widetilde{A}_1(U^-, \varPhi^-))$
for problem \eqref{TCD0}}
has $2d$ negative eigenvalues (``incoming characteristics'')
on boundary $\partial\varOmega:=\{x\in\mathbb{R}^d:x_1=0\}$.
As discussed before, the correct number of boundary conditions is $2d+1$,
which is just the case in \eqref{TCD0.b}.

In accordance with \eqref{rho.relation.E}--\eqref{inv1.E}, \eqref{inv3.E}--\eqref{inv4.E}, and \eqref{inv5.E},
we assume that the initial data \eqref{TCD0.c} satisfy
\begin{alignat}{4}
\label{rho.relation}& \rho^{\pm}=( {\rm det\,}\bm{F}^{\pm})^{-1}
&&& \quad&\textrm{if } x_1\geq 0,\\
\label{inv1}& F_{\ell k}^{\pm} \p_{\ell} ^{\varPhi^{\pm}} F_{ij}^{\pm} -F_{\ell j}^{\pm} \p_{\ell} ^{\varPhi^{\pm}} F_{ik}^{\pm}=0
\quad &&\textrm{for }  i,j,k=1,\ldots,d, & \quad&\textrm{if }  x_1> 0,\\
\label{inv3}&[\rho {F}_{jN}]=0
&&\textrm{for }  j=1,\ldots,d, &\quad&\textrm{if }  x_1=0,\\
\label{inv4}& [\rho F_{kN}F_{ij}-\rho F_{jN} {F}_{ik}]=0
&&\textrm{for }   i,j,k=1,\ldots,d,&\quad&\textrm{if } x_1=0,\\
\label{inv5}&{F}_{jN}^{\pm} =0
&&\textrm{for }  j=2,\ldots,d, &\quad&\textrm{if } x_1=0.
\end{alignat}
Here and below, to simplify the notation, we denote the partial differentials
with respect to the lifting function $\varPhi$  by
\begin{align} \label{differential}
\partial_t^{\varPhi}:=\partial_t-\frac{\partial_t\varPhi}{\partial_1\varPhi}\p_1,\ \
\partial_1^{\varPhi}:=\frac{1}{\partial_1\varPhi}\partial_1,\ \
\partial_i^{\varPhi}:=\partial_i-\frac{\partial_i\varPhi}{\partial_1\varPhi}\partial_1\ \ \mbox{for $i=2,\ldots,d.$}
\end{align}

The following proposition manifests that identities \eqref{rho.relation}--\eqref{inv5}
are involutions in the straightened coordinates.
See Appendix \ref{App.B} for the proof.

\begin{proposition}\label{pro1}
 For each sufficiently smooth solution of problem \eqref{TCD0} on the time interval $[0,T]$,
 if constraints \eqref{rho.relation}--\eqref{inv5} are satisfied at the initial time,
 then these constraints and
 \begin{align}
 \label{inv2}\partial_{\ell}^{\varPhi^{\pm}} (\rho^{\pm}{F}_{\ell  j}^{\pm})=0\qquad
\textrm{if $x_1>0,$  for $j=1,\ldots,d$,}
 \end{align}
 hold for all $t\in[0,T]$.
\end{proposition}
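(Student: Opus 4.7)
The plan is to verify each of the six identities separately, showing in every case that it satisfies a homogeneous evolution equation so that initial vanishing is propagated by uniqueness. For the three pointwise interior relations \eqref{rho.relation}, \eqref{inv1}, and \eqref{inv2}, I would first note that the change of variables $y=\Phi^{\pm}(t,x)$ identifies $\partial_t^{\Phi^{\pm}}$ and $\partial_i^{\Phi^{\pm}}$ with genuine partial derivatives in the $y$-variable, so that the straightened system \eqref{TCD0.a} coincides on each side with the Eulerian system \eqref{sym.form} and the constraints reduce to their Eulerian counterparts \eqref{rho.relation.E}, \eqref{inv1.E}, and \eqref{inv2.E}. To propagate \eqref{rho.relation}, I would combine \eqref{sym.form.1}, \eqref{sym.form.3}, \eqref{sym.form.4}, and the equation of state to check directly that
\[
(\partial_t+v_\ell\partial_\ell)(\rho\,\det\bm{F})=0,
\]
so that particle-trajectory uniqueness forces $\rho\,\det\bm{F}\equiv 1$. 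The identities \eqref{inv1} and \eqref{inv2} were recalled in Section~\ref{Sec.Nonlinear} to be involutions of \eqref{F.eq.E}--\eqref{conser.laws}; differentiating \eqref{sym.form.3}, commuting derivatives, and using the just-established \eqref{rho.relation}, each constraint satisfies a linear homogeneous transport equation and is therefore propagated. Alternatively, \eqref{inv2} can be read off pointwise from \eqref{rho.relation} and \eqref{inv1} via the Piola identity applied to the inverse deformation.

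For the boundary identities, I would restrict the interior equations to $\{x_1=0\}$ and compute the time derivative of each constraint using the boundary conditions \eqref{BC.E}. Writing $F_{jN}^{\pm}=F_{1j}^{\pm}-\sum_{i=2}^d \partial_i\varphi\,F_{ij}^{\pm}$ and using $\partial_t\varphi=v_N^+=v_N^-$ (the second equality follows from $[v]=0$), the contributions generated by $\partial_t\partial_i\varphi$ cancel against the convective terms coming from \eqref{sym.form.3}, leaving a relation of the form $\partial_t F_{jN}^{\pm}=G^{\pm}F_{jN}^{\pm}$ on $\{x_1=0\}$; this preserves \eqref{inv5} for $j=2,\ldots,d$. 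The jump relations \eqref{inv3} and \eqref{inv4} then follow from the conservation form \eqref{F.eq.conser}: taking its $N$-contraction on $\{x_1=0\}$ and combining the interior divergence constraint \eqref{inv2}, the just-established \eqref{inv5}, $[v]=0$, and $\partial_t\varphi=v_N^{\pm}$ reduces the time derivatives of $[\rho F_{jN}]$ and $[\rho F_{kN}F_{ij}-\rho F_{jN}F_{ik}]$ to expressions vanishing modulo the constraints themselves.

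The main obstacle will be the boundary computation for \eqref{inv5}: the normal $N=(1,-\partial_2\varphi,\ldots,-\partial_d\varphi)^{\mathsf{T}}$ depends on $\varphi$, so $\partial_t N$ appears at every step, and the crucial cancellation against the convective contribution only becomes visible after $\partial_t\varphi=v_N^{\pm}$ is substituted with care. Once this proportionality is exposed, the remaining propagation arguments for \eqref{inv3}--\eqref{inv4} are routine applications of the Rankine--Hugoniot/divergence structure combined with the jump conditions \eqref{BC.E.general}.
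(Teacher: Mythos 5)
Your proposal follows essentially the same route as the paper's Appendix~\ref{App.B}: propagate the interior constraints through the homogeneous transport identities for $\rho\det\bm{F}$ and the curl-type quantities (with \eqref{inv2} obtained pointwise from \eqref{rho.relation} and \eqref{inv1}, which is exactly the paper's argument), and propagate the boundary constraints by restricting the equations to $x_1=0$ and using $\partial_t\varphi=v_N^{\pm}$, $[v]=0$, and the tangency of the transport field to obtain homogeneous transport equations for $\rho F_{jN}$ and for the jumps in \eqref{inv3}--\eqref{inv4}. The only difference is bookkeeping — the paper derives \eqref{inv3} and \eqref{inv5} together from one boundary transport equation and proves \eqref{inv2} last, whereas you establish \eqref{inv2} first and then invoke the conservation form — but the underlying cancellations are the same.
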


Relations \eqref{inv2} are involutions in the straightened coordinates
corresponding to the divergence constraints \eqref{inv2.E},
from which we can pass from the Eulerian to the Lagrangian formulation
of the thermoelastic contact discontinuity problem.


\section{Linearized Problem and Main Theorem}\label{Sec.Bas}

In this section we introduce the basic state $(\mathring{U}^{\pm} ,\mathring{\varphi} )$
that is a small perturbation of the stationary thermoelastic contact discontinuity
$(\widebar{U}^{\pm},\bar{\varphi})$ given in
\eqref{background}--\eqref{background1}.
Then we perform the linearization
and state the main theorem of this paper.

\subsection{Basic State}
We denote
$\varOmega_T:=(-\infty,T)\times \varOmega$ and $\omega_T:=(-\infty,T)\times\partial \varOmega$
for any real number $T$.
Let the basic state $(\mathring{U}^{\pm},\mathring{\varphi})$ with
$\mathring{U}^{\pm}:=(\mathring{p}^{\pm},\mathring{v}^{\pm},\mathring{\bm{F}}^{\pm},\mathring{ S }^{\pm})^{\mathsf{T}}$
be sufficiently smooth.
According to form \eqref{varPhi}, we introduce the notations:
\begin{alignat}{4} \label{bas.relation}
\widebar{\varPhi}^{\pm}&:=\pm x_1,&\ \
\mathring{\varPhi}^{\pm}& :=\pm x_1+\mathring{\varPsi}^{\pm},&\ \ 
\mathring{\varPsi}^{\pm} &:=\chi(\pm x_1)\mathring{\varphi}(t,x'),\\
\label{N.ring}
\mathring{v}_{N}^{\pm}&:=\mathring{v}^{\pm}\cdot \mathring{N}^{\pm},&\ \
{\mathring{{F}}}_{jN}^{\pm}&:=\mathring{{F}}_{j}^{\pm}\cdot \mathring{N}^{\pm},&\ \
\mathring{N}^{\pm}&:=(1,-\partial_2\mathring{\varPhi}^{\pm},\ldots,-\partial_d\mathring{\varPhi}^{\pm})^{\mathsf{T}},
\end{alignat}
where $\chi\in C_0^{\infty}(\mathbb{R})$ satisfies \eqref{chi}, and
$\mathring{{F}}_{j}^{\pm}$  are the $j$-th columns of $\mathring{\bm{F}}^{\pm}$.

Perturbations $\mathring{V}^{\pm}:=\mathring{U}^{\pm}-\widebar{U}^{\pm}$
and $\mathring{\varphi}$ are supposed to satisfy
\begin{align} \label{bas.bound}
\|\mathring{V}^{\pm}\|_{H^6(\varOmega_T)}+\|\mathring{\varphi}\|_{H^6(\omega_T)}\leq K
\end{align}
for a sufficiently small positive constant $K\leq 1$, so that
\begin{align}
\pm \partial_1\mathring{\varPhi}^{\pm}\geq \frac{1}{2} \,\qquad \textrm{on $\xbar{\varOmega_T}$},
\label{bas.positive}
\end{align}
thanks to the Sobolev embedding $H^6(\varOmega_T)\hookrightarrow W^{3,\infty}(\varOmega_T)$.
We assume further that the basic state $(\mathring{U}^{\pm},\mathring{\varphi})$
satisfies constraints \eqref{rho.relation}, \eqref{TCD0.b}, and \eqref{inv3}--\eqref{inv5}, {\it i.e.},
\begin{subequations} \label{bas.cc}
\begin{alignat}{4}
\label{bas.c1}&\mathring{\rho}^{\pm}=(\det\mathring{\bm{F}}^{\pm})^{-1},\quad \mathring{p}^{\pm}=p(\mathring{\rho}^{\pm},\mathring{ S }^{\pm})
&  &\textrm{if } x_1\geq 0,\\[0.5mm]
\label{bas.c2}&\mathbb{B}\big(\mathring{U}^+,\mathring{U}^-,\mathring{\varphi}\big)=0
& &\textrm{if } x_1= 0,\\[0.5mm]
\label{bas.c3a}& [\mathring{\rho}\mathring{F}_{jN}]=0
\qquad \qquad \qquad \qquad  \; \textrm{for }  j=1,\ldots,d,
&  &\textrm{if } x_1= 0,\\[0.5mm]
\label{bas.c4}& [\mathring{\rho}\mathring{F}_{kN}\mathring{F}_{ij}-\mathring{\rho}\mathring{F}_{jN}\mathring{F}_{ik}]=0
 \qquad  \textrm{for }  i,j,k=1,\ldots,d,
&\quad &\textrm{if } x_1= 0,\\[0.5mm]
\label{bas.c3b}&  \mathring{F}_{jN}^{\pm}=0
\qquad \qquad \qquad \qquad \quad \ \textrm{for }  j=2,\ldots,d,
&  &\textrm{if } x_1= 0.
\end{alignat}
\end{subequations}
Under \eqref{bas.c3a} and \eqref{bas.c3b}, relations \eqref{bas.c4} are equivalent to
\begin{align}
[\mathring{F}_{j}]=0\qquad \textrm{on $\p\varOmega$, \ \ for $j=2,\ldots,d$}.
\label{bas.c5}
\end{align}
Moreover, we assume that the basic state satisfies
\begin{align} \label{bas.c6}
\Big(\p_t+\sum_{\ell=2}^d\mathring{v}_{\ell}^{\pm}\p_{\ell}\Big) \mathring{ {F}}_{j}^{\pm}
-\sum_{\ell=2}^d\mathring{ {F}}_{\ell j}^{\pm}\p_{\ell} \mathring{v}^{\pm}=0\qquad  \textrm{on $\p\varOmega$,\ \ for $j=2,\ldots,d$,}
\end{align}
which will play an important role in the estimate of the tangential derivatives,
especially in the proof of Lemma \ref{lem.key}.
As a matter of fact, constraints \eqref{bas.c6}
come from restricting the interior equations for $F_{j}^{\pm}$
on boundary $\p\varOmega$ and utilizing \eqref{bas.c2}--\eqref{bas.c3b}.

Before performing the linearization,
we give an alternative form of the boundary operator $\mathbb{B}$ defined in \eqref{B.bb.def},
which will be essential for providing the cancellation effect in the estimate of the tangential derivatives.
More precisely, by virtue of \eqref{inv5}, we observe
\begin{align}   \label{key}
\det \bm{F}^{\pm}= \varrho(\bm{F}^{\pm})^{-1}  F_{1N}^{\pm} \qquad\, \textrm{on $\p\varOmega$},
\end{align}
where $\varrho(\bm{F})$ is the scalar function defined by
\begin{align}    \label{varrho}
\varrho(\bm{F}):=\left\{
\begin{aligned}
&{F_{22}^{-1}}\qquad &&\textrm{if }  d=2,\\
&(F_{22}F_{33}-F_{23}F_{32})^{-1}\qquad &&\textrm{if } d=3.
\end{aligned}
\right.
\end{align}
In particular, for the background state \eqref{background}, we have
\begin{align}
\label{varrho.bar}
\varrho(\widebar{\bm{F}}^{\pm})=\left\{
\begin{aligned}
&{\widebar{F}_{22}^{-1}}\qquad &&\textrm{if }  d=2,\\
& \widebar{F}_{22}^{-1}\widebar{F}_{33}^{-1}\qquad &&\textrm{if } d=3.
\end{aligned}
\right.
\end{align}
Combine \eqref{key} with \eqref{rho.relation} and use \eqref{inv3} to obtain
\begin{align} \label{key1}
\rho^{\pm} F_{1N}^{\pm} = \varrho(\bm{F}^{+})= \varrho(\bm{F}^{-})\qquad\, \textrm{on $\p\varOmega$},
\end{align}
which yields
\begin{align}
\label{B.bb.def2}
\mathbb{B}(U^+,U^-,\varphi)=
\begin{pmatrix}
\partial_t\varphi-v_N^+ \\[0.5mm]
[v] \\[0.5mm]
 [p]-\varrho(\bm{F}^{+}) [{F}_{11}]\\[0.5mm]
[{F}_{11}\p_2\varphi+{F}_{21}]\\
\vdots\\
[{F}_{11}\p_d\varphi+{F}_{d1}]
\end{pmatrix}.
\end{align}
Furthermore, from \eqref{bas.c1}, \eqref{bas.c3b}, and \eqref{bas.c5}, we infer
\begin{align} \label{key1.bas}
\mathring{\rho}^{\pm} \mathring{F}_{1N}^{\pm} = \varrho(\mathring{\bm{F}}^{+})
= \varrho(\mathring{\bm{F}}^{-})\qquad\, \textrm{on $\p\varOmega$}.
\end{align}
As a result, constraints \eqref{bas.cc}--\eqref{bas.c6}
are equivalent to constraints  \eqref{bas.c1}--\eqref{bas.c2} and \eqref{bas.c3b}--\eqref{bas.c6}.

\subsection{Linearization and Main Theorem}

Let us now deduce the linearized problem based on identity \eqref{B.bb.def2}.
For this purpose, we consider families
$(U^{\pm}_{\epsilon},\,\varPhi^{\pm}_{\epsilon})
=(\mathring{U}^{\pm}+\epsilon V^{\pm},\,\mathring{\varPhi}^{\pm}+\epsilon \varPsi^{\pm})$,
where
\begin{align}  \label{varPsi.def}
 \varPsi^{\pm}(t,x):=\chi(\pm x_1)\psi(t,x').
\end{align}
The linearized operators are given by
\begin{align*}
\left\{\begin{aligned}
&\mathbb{L}'\big(\mathring{U}^{\pm},\mathring{\varPhi}^{\pm}\big)(V^{\pm},\varPsi^{\pm})
:=\left.\frac{\mathrm{d}}{\mathrm{d}\epsilon}
\mathbb{L}\big(U^{\pm}_{\epsilon}, \varPhi^{\pm}_{\epsilon}\big)\right|_{\epsilon=0},\\
&\mathbb{B}'\big(\mathring{U}^{\pm},\mathring{\varphi}\big)(V,\psi)
:=\left.\frac{\mathrm{d}}{\mathrm{d}\epsilon}
 \mathbb{B}(U^{+}_{\epsilon},U^{-}_{\epsilon},\varphi_{\epsilon})\right|_{\epsilon=0},
\end{aligned}\right.
\end{align*}
where  $V:=(V^+,V^-)^{\mathsf{T}}$,
and $\varphi_{\epsilon}:=\mathring{\varphi}+\epsilon \psi$
denotes the common trace of $\varPhi^{\pm}_{\epsilon}$ on boundary  $\partial\varOmega$.
A standard calculation leads to
\begin{align} \label{L.prime}
\mathbb{L}'(U,\varPhi)(V,\varPsi)=
L(U,\varPhi)V+\mathcal{C}(U,\varPhi)V - \frac{1}{\partial_1 \varPhi}\big(L(U,\varPhi)\varPsi \big)\partial_1 U,
\end{align}
where $L(U,\varPhi)$ is given in \eqref{L.def}, and
$\mathcal{C}(U,\varPhi)$ is the zero-th order operator defined by
\begin{align}\label{C.cal}
  \mathcal{C}(U,\varPhi)V:= V_{\ell}\frac{\p A_0(U)}{\p {U_{\ell}} } \p_t U
  +  V_{\ell}\frac{\p \widetilde{A}_1(U,\varPhi)}{\p {U_{\ell}} }  \p_1 U
+\sum_{i=2}^d V_{\ell} \frac{\p A_i(U)}{\p {U_{\ell}} } \p_i U .
\end{align}
Thanks to the alternative form \eqref{B.bb.def2},
we compute
\begin{align}
\mathbb{B}'\big(\mathring{U}^{\pm},\mathring{\varphi}\big)(V,\psi)=
\begin{pmatrix}
\big(\p_t+\sum_{i=2}^d\mathring{v}_i^+\p_i\big)\psi -v^+ \cdot \mathring{N} \\[0.5mm]
[v]\\[0.5mm]
[p]-\varrho(\mathring{\bm{F}}^{+}) [{F}_{11}]
-[\mathring{F}_{11}]\p_{F_{ij}}\varrho(\mathring{\bm{F}}^{+}) F_{ij}^{+} \\[0.5mm]
[{F}_{11}\p_2\mathring{\varphi}+{F}_{21}]+[\mathring{F}_{11}] \p_2\psi \\
\vdots\\
[{F}_{11}\p_d\mathring{\varphi}+{F}_{d1}]+[\mathring{F}_{11}] \p_d\psi
\end{pmatrix}
\label{B.prime}
\end{align}
with $\varrho(\bm{F})$ defined by \eqref{varrho}.

As in {\sc Alinhac} \cite{A89MR976971}, applying the ``good unknowns'':
\begin{align} \label{good}
\dot{V}^{\pm}:=V^{\pm}-\frac{\varPsi^{\pm}}{\partial_1 \mathring{\varPhi}^{\pm}}\partial_1\mathring{U}^{\pm},
\end{align}
we calculate ({\it cf.} \cite[Proposition\,1.3.1]{M01MR1842775})
\begin{align} \notag
&\mathbb{L}'(\mathring{U}^{\pm}, \mathring{\varPhi}^{\pm})(V^{\pm},\varPsi^{\pm})\\
&\quad =L(\mathring{U}^{\pm}, \mathring{\varPhi}^{\pm})\dot{V}^{\pm}
+\mathcal{C}( \mathring{U}^{\pm},\mathring{\varPhi}^{\pm})\dot{V}^{\pm}
+\frac{\varPsi^{\pm}}{\partial_1\mathring{\varPhi}^{\pm}}
\partial_1\big(L(\mathring{U}^{\pm},\mathring{\varPhi}^{\pm} )\mathring{U}^{\pm}\big).
\label{Alinhac}
\end{align}
In view of the nonlinear results obtained in \cite{A89MR976971, CS08MR2423311,CSW19MR3925528},
we neglect the zero-th order terms in $\varPsi^{\pm}$ of \eqref{Alinhac}
and consider the {\it effective linear problem}:
\begin{subequations} \label{ELP}
 \begin{alignat}{3}
 \label{ELP.1}
 &\mathbb{L}'_{e,\pm} \dot{V}^{\pm}=f^{\pm}
 &\qquad &\textrm{if } x_1>0,\\
  \label{ELP.2}
  &\mathbb{B}'_e (\dot{V},\psi)=g
 &&\textrm{if } x_1=0,\\
 \label{ELP.3}
 &(\dot{V},\psi)=0   &&\textrm{if } t<0,
 \end{alignat}
\end{subequations}
where we abbreviate $\dot{V}:=(\dot{V}^+,\dot{V}^-)^{\mathsf{T}}$ and denote
\begin{align}
\label{Le.bb}&\mathbb{L}'_{e,\pm}  \dot{V}^{\pm} :=
L(\mathring{U}^{\pm}, \mathring{\varPhi}^{\pm})\dot{V}^{\pm}
+\mathcal{C}( \mathring{U}^{\pm},\mathring{\varPhi}^{\pm})\dot{V}^{\pm},\\[2mm]
\label{Be.bb}&\mathbb{B}'_e (\dot{V},\psi) :=
\begin{pmatrix}
\big(\p_t+\sum_{i=2}^d\mathring{v}_i^+\p_i\big)\psi
-\dot{v}^+\cdot \mathring{N}^{+}-\partial_1 \mathring{v}_N^+ \psi  \\[1mm]
[\dot{v}]+  \psi (\p_1 \mathring{v}^++ \p_1 \mathring{v}^-) \\[1mm]
[\dot{p}]-\varrho(\mathring{\bm{F}}^{+}) [\dot{F}_{11}]
-[\mathring{F}_{11}]\p_{F_{ij}}\varrho(\mathring{\bm{F}}^{+}) \dot{F}_{ij}^{+} + \mathring{b}_1 \psi \\[1mm]
[\dot{{F}}_{11}\p_2\mathring{\varphi}+\dot{{F}}_{21}]+ [\mathring{F}_{11}]\p_2 \psi  +\mathring{b}_2 \psi \\
\vdots\\
[\dot{{F}}_{11}\p_d\mathring{\varphi}+\dot{{F}}_{d1}]+ [\mathring{F}_{11}]\p_d \psi +\mathring{b}_d \psi
\end{pmatrix},
\end{align}
with operators $L$ and $\mathcal{C}$ defined by \eqref{L.def} and \eqref{C.cal}.
In \eqref{Be.bb}, we denote
\begin{align*}
&\mathring{b}_1:= \p_1 \mathring{p}^++\p_1 \mathring{p}^--\varrho(\mathring{\bm{F}}^{+}) (\p_1 \mathring{F}_{11}^+ +\p_1 \mathring{F}_{11}^-)
-[\mathring{F}_{11}]\p_{F_{ij}}\varrho(\mathring{\bm{F}}^{+}) \p_1 \mathring{F}_{ij}^{+},\\
&\mathring{b}_i:=(\p_1 \mathring{{F}}_{11}^++\p_1 \mathring{{F}}_{11}^-)\p_i\mathring{\varphi}+
(\p_1 \mathring{{F}}_{i1}^++\p_1 \mathring{{F}}_{i1}^-)\qquad
\textrm{for $i=2,\ldots,d$}.
\end{align*}
The explicit form \eqref{Be.bb} results from the identity
$\mathbb{B}'_e(\dot{V},\psi) =\mathbb{B}'\big(\mathring{U}^{\pm},\mathring{\varphi}\big)(V,\psi).$
We write $\mathring{V}:=(\mathring{V}^+,\mathring{V}^-)^{\mathsf{T}}$,
$\mathring{\varPsi}:=(\mathring{\varPsi}^+,\mathring{\varPsi}^-)^{\mathsf{T}}$,
$\mathbb{L}'_e \dot{V}:=(\mathbb{L}'_{e,+} \dot{V}^+,\mathbb{L}'_{e,-}\dot{V}^-)^{\mathsf{T}}$,
${f}:=({f}^+,{f}^-)^{\mathsf{T}}$, etc.
to avoid overloaded expressions.

We now state the main result of this paper.

\begin{theorem}\label{thm2}
Let $T>0$ and $s\in \mathbb{N}_+$ be fixed.
Assume that the background state \eqref{background}--\eqref{background1}
	satisfies the stability condition{\rm :}
\begin{align}
\label{thm.H1}
&\frac{[\widebar{F}_{11}]}{ \widebar{F}_{11}^+} < 
\left\{
\begin{aligned}
& \widebar{F}_{22}^2 (\widebar{F}_{11}^+)^{-2}&\qquad & \textrm{if } d=2,\\[1mm]
&\widebar{C }^{-1} &&\textrm{if } d=3,
\end{aligned}\right.
\end{align}
with
\begin{align}
\notag \qquad \quad & \widebar{C}:= 
{   \begin{aligned}
	\Big(1+\frac{\widebar{F}_{22}^2}{\widebar{F}_{33}^2} \Big)^{1/2}
	\bigg\{
	\max(1,\, \frac{(\widebar{F}_{11}^+)^2}{\widebar{F}_{22}^{2}})
	+
	\max ( 1,\,\frac{(\widebar{F}_{11}^+)^2}{\widebar{F}_{33}^{2}})
	\frac{\widebar{F}_{33}}{\widebar{F}_{22}}
	\bigg\},
	\end{aligned}}
\end{align}
and that perturbations $(\mathring{V}^{\pm},\mathring{\varphi})\in H^{s+2}(\varOmega_T)\times H^{s+2}(\omega_T)$
satisfy constraints \eqref{bas.bound}--\eqref{bas.c6}.
Then there exist positive constants $K_0$ and $C_0$, uniformly bounded
even when $[\widebar{F}_{11}]$ tends to zero, such that,
for all $K \leq K_0$ and $(\dot{V}^{\pm},\psi)\in H^{s+1}(\varOmega_T)\times H^{s+3/2}(\omega_T)$
vanishing in the past,
\begin{alignat}{3}
\notag &
\|\dot{V}\|_{H^{1}(\varOmega_T)}+\|\psi\|_{H^{3/2}(\omega_T)}&\\ &\quad 
\leq C_0
\Big\{\|\mathbb{L}'_e \dot{V}\|_{H^{1}(\varOmega_T)}+
 \|\mathbb{B}'_e(\dot{V},\psi) \|_{H^{3/2}(\omega_T)}
\Big \} &\quad  \textrm{if } s=1,
	\label{thm2.est2}\\
\notag
&\|\dot{V}\|_{H^{s}(\varOmega_T)}+\|\psi\|_{H^{s+1/2}(\omega_T)}&\\
&\notag \quad 	\leq C_0
	\Big\{ \|\mathbb{L}'_e \dot{V}\|_{H^{s}(\varOmega_T)}+\|\mathbb{B}'_e(\dot{V},\psi)\|_{H^{s+1/2}(\omega_T)}&\\
	&\notag \qquad \qquad
	+\left(\|\mathbb{L}'_e \dot{V}\|_{H^3(\varOmega_T)} +\|\mathbb{B}'_e(\dot{V},\psi)\|_{H^{7/2}(\omega_T)}\right)\ &\\
	&\qquad \qquad\quad \times\left(\|\mathring{V}^{\pm}\|_{H^{s+2}(\varOmega_T)} +\|\mathring{\varphi}\|_{H^{s+2}(\omega_T)}  \right)
	\Big\} &\quad  {\textrm{if } s\geq 3.}
	\label{thm2.est}
	\end{alignat}
\end{theorem}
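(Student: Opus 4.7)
The plan is to follow the blueprint laid out in the introduction, separating the a priori estimate into a normal-derivative part and a tangential-derivative part, then recombining them by finite induction.

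First, I would reduce problem \eqref{ELP} to one with homogeneous boundary conditions by constructing a lifting of the boundary data $g$ that absorbs the $H^{s+1/2}$-norm of $\mathbb{B}'_e(\dot V,\psi)$ into the source term $f$ and into a modified unknown; this is the content alluded to in Section~\ref{sec.Homogenization}. After this step, the stability condition \eqref{thm.H1}, the smallness of $K$, and the constraints \eqref{bas.cc}--\eqref{bas.c6} on the basic state are inherited by the reduced system. The benefit is that I only need to estimate $\dot V$ and $\psi$ against a volume source, paying the usual interpolation price that produces the tame form of \eqref{thm2.est}.

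Second, I would establish the \emph{normal-derivative estimate} (Proposition~\ref{lem.normal}). The noncharacteristic components $W_{\rm nc}$ of $\dot V$ satisfy an ODE in $x_1$ obtained by inverting the noncharacteristic block of $\widetilde{A}_1(\mathring U^\pm,\mathring\varPhi^\pm)$, so their normal derivatives are controlled directly by tangential derivatives and $f$. The entropies $S^\pm$ are transported by $\mathring v$, giving a similar control. The genuine difficulty lies with the characteristic components $W_1$ and $W_{jd+i+1}$ ($1\le i\le d$, $2\le j\le d$); for these I would exploit the physical involutions \eqref{inv1}, \eqref{inv2}, \eqref{inv3}--\eqref{inv5} through the auxiliary quantities $\varsigma$, $\eta$, $\zeta$ introduced in \eqref{varsigma}--\eqref{zeta} and estimated via \eqref{eta.est'}. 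These quantities are linear combinations of normal derivatives of characteristic variables with coefficients built from $\mathring{\bm F}$, so their control plus the noncharacteristic estimate recovers the missing $L^2$-norms of $\p_1 \mathrm{D}_{\rm tan}^\beta W_1$ and $\p_1 \mathrm{D}_{\rm tan}^\beta W_{jd+i+1}$. A finite induction on the total order completes this step.

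Third, I would prove the \emph{tangential-derivative estimate} (Proposition~\ref{lem.tan}). Applying $\mathrm{D}_{\rm tan}^\beta$ to the symmetric hyperbolic system and integrating against the symmetrizer produces the standard interior energy $\mathcal E^\beta_{\rm tan}(t)$ plus a boundary quadratic form $Q$ on $\omega_T$. The main obstacle is that $Q$ involves characteristic variables because of the explicit $[F_{11}]$, $[F_{i1}]$ terms in \eqref{Be.bb}, which is the new phenomenon the paper emphasizes. Here I would invoke the intrinsic cancellation of Section~\ref{sec.tan2}: combining the boundary conditions with the restriction of the interior equations for $F_j$ to $\p\varOmega$ (which is why \eqref{bas.c6} was imposed) collapses $Q$ into an error term $\mathcal R_2$ plus an instant boundary integral $\mathcal R_3$. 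The term $\mathcal R_2$ is handled by the identities for derivatives of $\psi$ following Trakhinin's idea, with interpolation ensuring the coefficients remain uniform as $[\widebar F_{11}]\to 0$. For $\mathcal R_3$ I would split on whether a time derivative appears: in the time-derivative case, use \eqref{key2b} to trade $\p_t F$ for tangential spatial derivatives of $v$ and apply the hyperplane trace Lemma~\ref{lem.trace2} to bound $\mathcal R_{31}$; the pure-space case is closed through the identities \eqref{eta}--\eqref{eta.est'} for $(\eta,\zeta)$ together with the same trace lemma. This is exactly where the algebraic inequality \eqref{thm.H1} is consumed to give absorption into $\mathcal E^\beta_{\rm tan}(t)$ with a strictly positive gap.

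Finally, I would combine Propositions~\ref{lem.normal} and~\ref{lem.tan} as in Section~\ref{sec.Proof}. The $L^2$ and $H^1$ cases follow from a straightforward Gronwall argument, yielding \eqref{thm2.est2}. For $s\ge 3$ I would iterate, using Moser-type calculus inequalities from Section~\ref{sec.Preliminary} to peel off nonlinear coefficients, producing the tame dependence on $\|\mathring V\|_{H^{s+2}}+\|\mathring\varphi\|_{H^{s+2}}$ with the fixed loss of derivatives indicated in \eqref{thm2.est}. The hardest step I expect is the tangential estimate's $\mathcal R_3$ bound with constants independent of $[\widebar F_{11}]$, since that is where both the cancellation structure and the sharp form of the stability condition \eqref{thm.H1} must work together.
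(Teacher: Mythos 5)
Your proposal follows essentially the same route as the paper's own proof: partial homogenization and the change of unknowns, the normal-derivative estimate recovered through $\varsigma$, $\eta$, $\zeta$ and finite induction, the tangential estimate via the boundary cancellation reducing $Q$ to $\mathcal{R}_2+\mathcal{R}_3$ (with the Trakhinin-type identities for $\mathrm{D}_{x'}\psi$, the hyperplane trace lemma, the time/space splitting of $\mathcal{R}_3$, and the absorption under \eqref{thm.H1}), and the final combination with Gr\"onwall and Moser calculus. The outline is accurate and correctly identifies where the stability condition and the uniformity in $[\widebar{F}_{11}]$ enter.
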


{Notice that the $H^2(\varOmega_T)\times H^{5/2}(\omega_T)$--estimate of $(\dot{V},\psi)$ follows from \eqref{thm2.est} with $s=3$.}
We remark that the tame estimates \eqref{thm2.est2}--\eqref{thm2.est} present
no loss of regularity with respect to the interior source term $\mathbb{L}'_e \dot{V}$,
while there is a loss of one derivative with respect to the boundary source term $\mathbb{B}'_e(\dot{V},\psi)$.
It should also be pointed out that estimate \eqref{thm2.est}
is with a {\it fixed} loss of regularity with respect to the coefficients,
which offers a way to establish
the nonlinear stability of thermoelastic contact discontinuities
by a suitable Nash--Moser iteration scheme.
The dropped terms in \eqref{Alinhac} will be considered as error terms
at each Nash--Moser iteration step.
Moreover, Theorem \ref{thm2} provides the tame estimates in the usual Sobolev spaces $H^s$
for the solutions and source terms vanishing in the past,
which corresponds to the nonlinear problem with zero initial data.
The case with general initial data is postponed to the nonlinear analysis that involves
the construction of so-called approximate solutions.


\section{Sobolev Functions and Notations}\label{sec.Preliminary}
In this section, we first state the definitions of some fractional Sobolev spaces and norms for self-containedness.
Then we prove two important estimates for the traces of  $H^1(\mathbb{R}^{n+1}_+)$--functions
on the hyperplane $\{y\in\mathbb{R}^{n+1}:y_1=0\}$ with $\mathbb{R}_+^{n+1} :=\{y\in\mathbb{R}^{n+1}:y_1>0 \}$.
We also present the Moser-type calculus inequalities and the notations for later use.

\subsection{Fractional Sobolev Spaces and Norms}

We first give the definitions of the Sobolev spaces and norms for general domains;
see also {\sc Tartar} \cite{T07MR2328004} for more details.

\begin{definition}\label{def.Sobolev1}
	Let $\mathcal{O}$ be an open subset of $\mathbb{R}^n$ with $n\in\mathbb{N}_+$.
	For every nonnegative integer $m$, the Sobolev space $H^m(\mathcal{O})$ is defined by
	\begin{align*}
	H^m(\mathcal{O}):=\{u\in L^2(\mathcal{O})\,:\,\p^{\alpha} u\in L^2(\mathcal{O})\
	\textrm{ for all } \alpha\in \mathbb{N}^n \textrm{ with } |\alpha|\leq m \},
	\end{align*}
	equipped with the norm{\rm :}
	\begin{align} \label{norm.def}
	\|u\|_{H^m(\mathcal{O})}:=
	\Big(\sum_{|\alpha|\leq m} \int_{\mathcal{O}} |\p^{\alpha} u(y)|^2 \mathrm{d}y \Big)^{\frac{1}{2}},
	\end{align}
	where $\alpha:=(\alpha_1,\ldots,\alpha_n)\in\mathbb{N}^n$ denotes a multi-index,
	\begin{align*}
	|\alpha|:=\alpha_1+\cdots+\alpha_n,\qquad\,\,
	\p^{\alpha} u(y):=\frac{\p^{|\alpha|}}{\p y_1^{\alpha_1}\cdots \p y_n^{\alpha_n}}u(y).
	\end{align*}
	For each real number $s\geq 0$ that is not an integer, the fractional Sobolev space
    $H^s(\mathcal{O})$ and its norm $\|\cdot\|_{H^s(\mathcal{O})}$ can be defined by interpolation
    between $H^{\lfloor s\rfloor}(\mathcal{O})$ and $H^{\lfloor s\rfloor+1}(\mathcal{O})$
    $($see {\rm \cite[\S 22]{T07MR2328004}}$)$,
	where $\lfloor s\rfloor$ denotes the greatest integer less than or equal to $s$.
\end{definition}
Next we present an alternative definition of the Sobolev space $H^s(\mathbb{R}^n)$ via the Fourier transform.

\begin{definition}\label{def.Sobolev2}
	For each real number $s\geq 0$,  we define
	\begin{align*}
	H^s(\mathbb{R}^n):=\left\{u\in L^2(\mathbb{R}^n)\,:\,|\xi|^s \mathcal{F} u (\xi)\in L^2(\mathbb{R}^n) \right\},
	\end{align*}
	where $\mathcal{F} u$ denotes the Fourier transform of $u$; in particular,
	\begin{align*}
	\mathcal{F} u(\xi):=\int_{\mathbb{R}^n} u(y) \mathrm{e}^{-2\pi  \mathrm{i} \, y\cdot\xi}\,\mathrm{d}y \qquad
	\textrm{for $u\in L^1(\mathbb{R}^n)$}.
	\end{align*}
	The negative-order Sobolev spaces $H^{-s}(\mathbb{R}^n)$ are defined by duality as
	$$
H^{-s}(\mathbb{R}^n):=(H^s(\mathbb{R}^n) )'\qquad\, \textrm{for all $s\geq 0$}.
$$
\end{definition}

Let us recall that
\begin{alignat}{3} \label{Fourier.id1}
\mathcal{F}(\p^{\alpha} u )&=(2\pi \mathrm{i}\, \xi)^{\alpha}\mathcal{F}u &&
\textrm{for all $u\in L^2(\mathbb{R}^n)$},\\
\label{Fourier.id2}
\int_{\mathbb{R}^n} u\;\! \xbar{w}\,\mathrm{d}y 
&=\int_{\mathbb{R}^n} \mathcal{F}u \, \xbar{\mathcal{F} w}\,\mathrm{d}y &\qquad&
  \textrm{for all $u,w\in L^2(\mathbb{R}^n)$},
\end{alignat}
where $\xbar{w}$  denotes the complex conjugation of $w$.
Using identities \eqref{Fourier.id1}--\eqref{Fourier.id2},
we can show that Definition \ref{def.Sobolev1} is equivalent to Definition \ref{def.Sobolev2}
for $s\in\mathbb{N}$ and $\mathcal{O}=\mathbb{R}^n$.
Furthermore, we refer to \cite{T07MR2328004}  for
the equivalence between Definition \ref{def.Sobolev1} and Definition \ref{def.Sobolev2}
for fractional Sobolev spaces $H^s(\mathbb{R}^n)$.

\subsection{Traces on the Hyperplane}

The following lemma is to characterize
the traces of $H^{1}(\mathbb{R}_+^{n+1} )$--functions on the hyperplane $\{y\in\mathbb{R}^{n+1}:y_1=0\}$.
\begin{lemma}\label{lem.trace1}
	Any function $u\in H^{1}(\mathbb{R}_+^{n+1} )$
	has a trace $w$ on the hyperplane $\{y\in\mathbb{R}^{n+1}:y_1=0\}$  such that
	$w$ belongs to $H^{1/2}(\mathbb{R}^{n} )$ and satisfies
	\begin{align}  \label{trace.est1}
	\int_{\mathbb{R}^n}(1+4\pi^2|\xi'|^2)^{\frac{1}{2}} \left|\mathcal{F}w(\xi' ) \right|^2  \mathrm{d} \xi'
	\leq   \|u \|_{H^1(\mathbb{R}_+^{n+1} )}^2 .
	\end{align}
\end{lemma}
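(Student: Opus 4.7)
The plan is to establish the trace inequality by a sharp one-dimensional estimate, carried out fibrewise after a tangential Fourier transform. By a standard density argument, it suffices to prove \eqref{trace.est1} for $u\in C_0^{\infty}(\xbar{\mathbb{R}^{n+1}_+})$, and then the trace operator extends by continuity to all of $H^1(\mathbb{R}^{n+1}_+)$.

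First I would write $y=(y_1,y')$ with $y'=(y_2,\ldots,y_{n+1})$, and apply the partial Fourier transform in the tangential variable $y'$ to obtain $v(y_1,\xi'):=\mathcal{F}_{y'\to\xi'}u(y_1,\cdot)(\xi')$. By Plancherel's identity \eqref{Fourier.id2}, combined with \eqref{Fourier.id1} applied to the tangential derivatives, one has
\begin{align*}
\|u\|_{L^2(\mathbb{R}^{n+1}_+)}^2 &=\int_{\mathbb{R}^n}\!\!\int_0^\infty |v(y_1,\xi')|^2\,\d y_1\,\d\xi', \\
\|\partial_1 u\|_{L^2(\mathbb{R}^{n+1}_+)}^2 &=\int_{\mathbb{R}^n}\!\!\int_0^\infty |\partial_1 v(y_1,\xi')|^2\,\d y_1\,\d\xi', \\
\sum_{i=2}^{n+1}\|\partial_i u\|_{L^2(\mathbb{R}^{n+1}_+)}^2 &=\int_{\mathbb{R}^n}\!\!\int_0^\infty 4\pi^2|\xi'|^2\,|v(y_1,\xi')|^2\,\d y_1\,\d\xi'.
\end{align*}
Moreover, the trace $w$ satisfies $\mathcal{F}w(\xi')=v(0,\xi')$.

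Next, for each fixed $\xi'$, I would apply the fundamental theorem of calculus together with a weighted Cauchy--Schwarz inequality. Writing
\[
|v(0,\xi')|^2 = -\int_0^\infty \partial_{y_1}|v(y_1,\xi')|^2\,\d y_1
\leq 2\int_0^\infty |\partial_1 v(y_1,\xi')|\,|v(y_1,\xi')|\,\d y_1,
\]
and using the inequality $2ab\leq \alpha a^2+\alpha^{-1}b^2$ with the choice $\alpha=(1+4\pi^2|\xi'|^2)^{-1/2}$, I obtain the fibrewise bound
\[
(1+4\pi^2|\xi'|^2)^{1/2}|v(0,\xi')|^2 \leq \int_0^\infty |\partial_1 v|^2\,\d y_1 + (1+4\pi^2|\xi'|^2)\int_0^\infty |v|^2\,\d y_1.
\]

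Finally, I would integrate this pointwise estimate over $\xi'\in\mathbb{R}^n$. Applying Plancherel once more, the three terms on the right reassemble exactly into $\|\partial_1 u\|_{L^2}^2+\|u\|_{L^2}^2+\sum_{i=2}^{n+1}\|\partial_i u\|_{L^2}^2=\|u\|_{H^1(\mathbb{R}^{n+1}_+)}^2$, which yields \eqref{trace.est1} with the sharp constant $1$. The only delicate point, and the reason the estimate comes out with constant exactly $1$ rather than some unspecified $C$, is the precise balancing in the weighted Cauchy--Schwarz step; any other choice of the parameter $\alpha$ would produce a larger constant and fail to give the clean bound needed later in the paper.
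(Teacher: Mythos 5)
Your proof is correct, and it takes a genuinely different route from the paper. You work entirely on the half-space: after a tangential Fourier transform you prove the one-dimensional trace bound fibre by fibre, using the fundamental theorem of calculus and a weighted Young inequality with the $\xi'$-dependent parameter $\alpha=(1+4\pi^2|\xi'|^2)^{-1/2}$, and then reassemble via Plancherel; the only external ingredients are density of functions smooth up to the boundary in $H^{1}(\mathbb{R}^{n+1}_+)$ and the resulting continuous extension of the trace map, both standard. The paper instead extends $u$ to all of $\mathbb{R}^{n+1}$ by even reflection (with the factor $\sqrt{2}$ bookkeeping), reduces to Schwartz functions, writes $\mathcal{F}w(\xi')=\int_{\mathbb{R}}\mathcal{F}\tilde{u}(\xi_1,\xi')\,\mathrm{d}\xi_1$, and applies Cauchy--Schwarz together with the explicit computation $\int_{\mathbb{R}}\frac{\mathrm{d}\xi_1}{1+4\pi^2|\xi|^2}=\tfrac12(1+4\pi^2|\xi'|^2)^{-1/2}$; the factor $\tfrac12$ there exactly compensates the factor $2$ from the reflection, so both arguments land on the same sharp constant $1$, which is what matters for Lemma~\ref{lem.trace2} and the tangential estimates later. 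Your version is somewhat more self-contained (no extension operator, no formula for the trace as a $\xi_1$-integral, both of which the paper cites from Tartar), at the cost of having to invoke the density/continuity argument explicitly to know the trace exists; just note that your final parenthetical claim that any other choice of $\alpha$ fails is only about the balancing of the two terms, not a logical ingredient of the proof.
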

\begin{proof}
	We first extend $u\in H^{1}(\mathbb{R}_+^{n+1})$ to be defined in $\mathbb{R}^{n+1}$ by setting
	\begin{align*}
	E u(y_1,\,y'):=
	\left\{\begin{aligned}
	& u(y_1,\,y')\qquad && \textrm{if } y_1>0,\\
	& u(-y_1,\,y')\qquad && \textrm{if } y_1<0,
	\end{aligned}\right.
	\end{align*}
	for all $y':=(y_2,\ldots\,,y_{n+1})\in\mathbb{R}^n$.
In view of \cite[Lemma 12.5]{T07MR2328004},
	we obtain that $E u\in H^1(\mathbb{R}^{n+1})$.
	A direct computation yields
	\begin{align} \label{exten.est}
	\left\{\begin{aligned}
	&\|E u\|_{L^2(\mathbb{R}^{n+1})}\leq \sqrt{2}\|u\|_{L^2(\mathbb{R}_+^{n+1})},\\
	&\|\p_y E u\|_{L^2(\mathbb{R}^{n+1})}\leq \sqrt{2}\|\p_{y} u\|_{L^2(\mathbb{R}_+^{n+1})}.
	\end{aligned}\right.
	\end{align}
	By virtue of \eqref{exten.est}, it suffices to prove that, for all rapidly decreasing $C^{\infty}$--function $\tilde{u}\in \mathscr{S}(\mathbb{R}^{n+1}) $,
	\begin{align}  \label{trace.est1'}
	\int_{\mathbb{R}^n}(1+4\pi^2|\xi'|^2)^{\frac{1}{2}} \left|\mathcal{F}w(\xi') \right|^2  \mathrm{d} \xi'
	\leq   \frac{1}{2}\|\tilde{u} \|_{H^1(\mathbb{R}^{n+1} )}^2
	\end{align}
	with $w$ defined by $w(y'):=\tilde{u}(0,y')$ for $y'\in\mathbb{R}^n$.
	According to \cite[Lemma 15.11]{T07MR2328004}, we have
	\begin{align*}
	\mathcal{F} w(\xi')=\int_{\mathbb{R}}\mathcal{F} \tilde{u}(\xi_1,\xi')\mathrm{d}\xi_1 \qquad\,\textrm{for $\xi'\in\mathbb{R}^n$},
	\end{align*}
	which, along with the Cauchy--Schwarz inequality, implies
	\begin{align*}
	|\mathcal{F} w(\xi')|^{2}\leq \int_{\mathbb{R}} (1+4\pi^2|\xi|^2)
	\left|\mathcal{F}\tilde{u}(\xi_1,\xi')\right|^2\, \mathrm{d} \xi_1
	\int_{\mathbb{R}}  \frac{\mathrm{d}\xi_1 }{1+4\pi^2|\xi|^2}.
	\end{align*}
Performing the change of variable: $\xi_1=t{(1+4\pi^2|\xi'|^2)^{1/2}}$, we obtain
	\begin{align*}
	\int_{\mathbb{R}}  \frac{\mathrm{d}\xi_1 }{1+4\pi^2|\xi|^2}
	=(1+4\pi^2|\xi'|^2)^{-\frac{1}{2}}\int_{\mathbb{R}}  \frac{\mathrm{d} t}{1+4\pi^2 t^2}
	=\frac{1}{2}(1+4\pi^2|\xi'|^2)^{-\frac{1}{2}}.
	\end{align*}
	Combine the two estimates above to infer
\begin{align}\notag
	\int_{\mathbb{R}^n}(1+4\pi^2|\xi'|^2)^{\frac{1}{2}} \left|\mathcal{F}w(\xi') \right|^2  \mathrm{d} \xi'
	\leq   \frac{1}{2}\int_{\mathbb{R}^{n+1}} (1+4\pi^2|\xi|^2) \left|\mathcal{F}\tilde{u}(\xi_1,\xi')\right|^2\mathrm{d} \xi,
\end{align}
from which  we can deduce \eqref{trace.est1'} by means of \eqref{Fourier.id1}--\eqref{Fourier.id2}.
\qed\end{proof}

The next lemma will be crucial for reducing the boundary integrals to the volume ones
in the estimate of tangential derivatives.
\begin{lemma} \label{lem.trace2}
	If $n\in\mathbb{N}_+$  and $u_1,u_2 \in H^{1}(\mathbb{R}_+^{n+1})$, then
	\begin{align} \notag 
	&\left| \int_{\mathbb{R}^n} {u}_1 \frac{\p  u_2}{\p y_j}(0,y')\,  \mathrm{d} y' \right|
	\\
	& \label{trace.est2} \qquad \leq \|u_1 \|_{H^1(\mathbb{R}_+^{n+1} )}\|u_2 \|_{H^1(\mathbb{R}_+^{n+1} )}\quad\textrm{for } j=2,\ldots,n+1.
	\end{align}
\end{lemma}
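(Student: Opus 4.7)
The key structural feature of the estimate is that $j\geq 2$, so the derivative $\partial/\partial y_j$ is \emph{tangential} to the boundary hyperplane $\{y_1=0\}$. This means it commutes with the trace operator, so that, writing $w_i(y'):=u_i(0,y')$ for $i=1,2$, we have $(\partial_{y_j}u_2)(0,y')=\partial_{y_j}w_2(y')$. The plan is to reduce everything to an $L^2$ computation on the hyperplane via the Fourier transform and then invoke Lemma \ref{lem.trace1} to convert $H^{1/2}$-norms of traces into $H^1$-norms on the half-space.

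First, by the density of $C^\infty\cap H^1(\mathbb{R}^{n+1}_+)$, I may assume $u_1,u_2$ are smooth so that the integral makes classical sense, and by linearity split into real and imaginary parts if necessary so as to freely insert complex conjugates. Using Parseval's identity on $\mathbb{R}^n$ and identity \eqref{Fourier.id1}, I would rewrite
\begin{align*}
\int_{\mathbb{R}^n} w_1\,\partial_{y_j}w_2\,\mathrm{d}y'
=\int_{\mathbb{R}^n} \mathcal{F}w_1(\xi')\,\overline{\mathcal{F}(\partial_{y_j}w_2)(\xi')}\,\mathrm{d}\xi'
=-\int_{\mathbb{R}^n} 2\pi\mathrm{i}\,\xi_j\,\mathcal{F}w_1(\xi')\,\overline{\mathcal{F}w_2(\xi')}\,\mathrm{d}\xi'.
\end{align*}

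Next, I would distribute the weight $(1+4\pi^2|\xi'|^2)^{1/4}$ symmetrically onto the two factors and use the elementary bound $|2\pi\xi_j|\leq (4\pi^2|\xi'|^2)^{1/2}\leq (1+4\pi^2|\xi'|^2)^{1/2}$ (valid precisely because $j\geq 2$, so $\xi_j$ is a component of $\xi'$). The Cauchy--Schwarz inequality then yields
\begin{align*}
\left|\int_{\mathbb{R}^n} w_1\,\partial_{y_j}w_2\,\mathrm{d}y'\right|
\leq \left(\int_{\mathbb{R}^n}(1+4\pi^2|\xi'|^2)^{1/2}|\mathcal{F}w_1|^2\mathrm{d}\xi'\right)^{1/2}
\left(\int_{\mathbb{R}^n}(1+4\pi^2|\xi'|^2)^{1/2}|\mathcal{F}w_2|^2\mathrm{d}\xi'\right)^{1/2}.
\end{align*}
Finally, each of the two factors on the right is bounded by $\|u_i\|_{H^1(\mathbb{R}^{n+1}_+)}$ directly from the trace estimate \eqref{trace.est1} of Lemma \ref{lem.trace1}, and the conclusion \eqref{trace.est2} follows.

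The only genuinely delicate point is the weight bookkeeping in the step above: one must check that the weight $(1+4\pi^2|\xi'|^2)^{1/2}$ exactly absorbs the factor $|\xi_j|^2$ while leaving the same weight on the other factor, so that both trace norms produced match the form appearing in \eqref{trace.est1}. The restriction $j\geq 2$ is essential here, since for $j=1$ the derivative would be normal to the boundary and the trace of $\partial_{y_1}u_2$ would no longer lie in $H^{-1/2}(\mathbb{R}^n)$; this is why the lemma is stated only for tangential derivatives.
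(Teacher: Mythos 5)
Your proof is correct and follows essentially the same route as the paper: pass to the traces, use Parseval and $\mathcal{F}(\partial_{y_j}w_2)=2\pi\mathrm{i}\,\xi_j\mathcal{F}w_2$, apply Cauchy--Schwarz with the weight $(1+4\pi^2|\xi'|^2)^{\pm 1/2}$ together with $|\xi_j|\leq|\xi'|$ (which is where $j\geq 2$ enters), and finish with estimate \eqref{trace.est1} of Lemma \ref{lem.trace1} for both factors. The only cosmetic difference is that you absorb $2\pi|\xi_j|$ before a symmetric Cauchy--Schwarz, while the paper uses asymmetric weights and bounds $4\pi^2\xi_j^2(1+4\pi^2|\xi'|^2)^{-1/2}$ afterwards; these are equivalent.
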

\begin{proof}
	In light of  \eqref{Fourier.id1}--\eqref{Fourier.id2},   we have
	\begin{align*}
	\left| \int_{\mathbb{R}^n}{u}_1 \frac{\p  u_2}{\p y_j}(0,y')\,  \mathrm{d} y' \right|
	&=\left| \int_{\mathbb{R}^n}  \xbar{\mathcal{F} {\widebar{u}}_1}\,
	2\pi \mathrm{i}\,\xi_j \mathcal{F}  u_2(0,\xi')\,  \mathrm{d} \xi ' \right|        \\
	&\leq \left( \int_{\mathbb{R}^n}  (1+4\pi^2 |\xi'|^2)^{\frac{1}{2}}
	\left|\mathcal{F}u_1 (0,\xi ')\right|^2\,  \mathrm{d} \xi ' \right)^{\frac{1}{2}}\\
	&\quad  \times \left( \int_{\mathbb{R}^n}   (1+4\pi^2 |\xi'|^2)^{-\frac{1}{2}} 4\pi^2 \xi_j^2
	\left|\mathcal{F}u_2 (0,\xi ')\right|^2\,  \mathrm{d} \xi '\right)^{\frac{1}{2}},
	\end{align*}
which, combined with  \eqref{trace.est1}, leads to \eqref{trace.est2}.
\qed\end{proof}

\subsection{Moser-Type Calculus Inequalities}

We present the following Moser-type calculus inequalities
that will be repeatedly employed in the subsequent analysis.

\begin{lemma}[Moser-type calculus inequalities]
	Let $\mathcal{O}$ be an open subset of $\mathbb{R}^n$ with Lipschitz boundary for $n\in\mathbb{N}_+$.
	Assume that  $b\in C^{\infty}(\mathbb{R})$ and
	$u,w\in L^{\infty}(\mathcal{O}) \cap H^m(\mathcal{O})$ for an integer $m>0$.
	
	\noindent {\rm (a)}
	If $|\alpha|+|\beta|\leq m$ and $b(0)=0$, then
	\begin{alignat}{3}
	\label{Moser1}
	&\|\p^{\alpha} u\p^{\beta} w\|_{L^{2}}+ \| uw\|_{H^m}
	\leq C \|u\|_{H^{m}}\|w\|_{L^{\infty}}+C \|u\|_{L^{\infty}}\|w\|_{H^{m}},\\
	\label{Moser2}
	&\|b(u)\|_{H^{m}}\leq C(M_0)\|u\|_{H^{m}}.
	\end{alignat}
	
	\noindent {\rm (b)}
	If $|\alpha+\beta+\gamma|\leq m$, then
	\begin{align}    \label{Moser3}
	\|\p^{\alpha}[\p^{\beta},b(u)]\p^{\gamma}w\|_{L^{2}}
	\leq        C(M_0)
	\big(\|w\|_{H^{m}}+\|u\|_{H^{m}}\|w\|_{L^{\infty}}\big).
	\end{align}
	\mbox{\quad\ }Moreover, if $u\in W^{1,\infty}(\mathcal{O})$, then
	\begin{align}
	\|\p^{\alpha}[\p^{\beta},b(u)]\p^{\gamma}w\|_{L^{2}}
	&\leq      C(M_1)
	\big(\|w\|_{H^{m-1}}+\|u\|_{H^{m}}\|w\|_{L^{\infty}}\big). \label{Moser4}
	\end{align}
	Here we write $\|\cdot\|_{L^{p}}:=\|\cdot\|_{L^{p}(\mathcal{O})}$,
	$\|\cdot\|_{H^{m}}:=\|\cdot\|_{H^{m}(\mathcal{O})}$,
	and $\|\cdot\|_{W^{1,\infty}}:=\|\cdot\|_{W^{1,\infty}(\mathcal{O})}$
	for notational simplicity, and
	$M_0$ and $M_1$ are positive constants such that
	$\|u\|_{L^{\infty}}\leq M_0$ and $\|u\|_{W^{1,\infty}}\leq M_1$.
	 As usual,
	$$
     [a,b]c:=a(bc)-b(ac)
    $$
	denotes the notation of commutator.
\end{lemma}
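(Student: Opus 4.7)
The strategy is to build everything from two standard pillars: the Gagliardo--Nirenberg interpolation inequality
\[
\|\p^{\gamma}f\|_{L^{2m/|\gamma|}(\mathcal{O})}\leq C\|f\|_{H^{m}(\mathcal{O})}^{|\gamma|/m}\|f\|_{L^{\infty}(\mathcal{O})}^{1-|\gamma|/m},\qquad 0<|\gamma|<m,
\]
valid on any Lipschitz domain via a Calder\'on extension, together with the Leibniz and Fa\`a di Bruno formulas. Young's inequality $ab\leq \theta a^{1/\theta}+(1-\theta)b^{1/(1-\theta)}$ will be used to split mixed interpolation products, and $\|f\|_{L^{\infty}}$ simply replaces the interpolated factor whenever $|\gamma|=0$ or $|\gamma|=m$.

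For \eqref{Moser1}, I would apply H\"older's inequality to $\|\p^{\alpha}u\,\p^{\beta}w\|_{L^{2}}$ with dual exponents $p=2m/|\alpha|$ and $q=2m/|\beta|$ (interpreting $p=\infty$ if $|\alpha|=0$ and $q=\infty$ if $|\beta|=0$), and then Gagliardo--Nirenberg and Young's inequality on each factor to recover $\|u\|_{H^{m}}\|w\|_{L^{\infty}}+\|u\|_{L^{\infty}}\|w\|_{H^{m}}$. The estimate of $\|uw\|_{H^{m}}$ then follows by expanding $\p^{\delta}(uw)$ through the Leibniz rule for $|\delta|\leq m$ and summing the pointwise bounds just obtained. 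For \eqref{Moser2}, since $b(0)=0$, I would write $b(u)=u\int_{0}^{1}b'(\tau u)\,\mathrm{d}\tau$ and invoke Fa\`a di Bruno to expand $\p^{\delta}b(u)$ as a sum of products of the form $b^{(k)}(u)\prod_{i}\p^{\gamma_{i}}u$ with $\sum_{i}|\gamma_{i}|=|\delta|\leq m$ and $|\gamma_{i}|\geq 1$. Since $\|u\|_{L^{\infty}}\leq M_{0}$ and $b\in C^{\infty}$, the factor $b^{(k)}(u)$ is bounded in $L^{\infty}$ by $C(M_{0})$, and the remaining product is controlled by iterating \eqref{Moser1}.

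For part (b), I would first rewrite the commutator using Leibniz,
\[
[\p^{\beta},b(u)]\p^{\gamma}w=\sum_{\beta'+\beta''=\beta,\,|\beta'|\geq 1}\binom{\beta}{\beta'}\,\p^{\beta'}\!\big(b(u)\big)\,\p^{\beta''+\gamma}w,
\]
so that the top-order term $b(u)\p^{\beta+\gamma}w$ cancels. Applying $\p^{\alpha}$ and expanding once more, each resulting summand takes the form $\p^{\mu}(b(u))\,\p^{\nu}w$ with $|\mu|\geq 1$, $|\nu|\leq m-1$ and $|\mu|+|\nu|\leq m$. Fa\`a di Bruno then converts $\p^{\mu}(b(u))$ into a sum of $b^{(k)}(u)\prod_{i}\p^{\gamma_{i}}u$ with $|\gamma_{i}|\geq 1$ and $\sum_{i}|\gamma_{i}|=|\mu|\geq 1$, and the $L^{2}$ estimate closes by \eqref{Moser1} (pair one $\p^{\gamma_{i}}u$ factor with $\p^{\nu}w$; absorb the remaining $u$-factors into $L^{\infty}$ via Gagliardo--Nirenberg and $\|u\|_{L^{\infty}}\leq M_{0}$), giving \eqref{Moser3}. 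The sharper \eqref{Moser4} is obtained by the same argument, but now using $\|u\|_{W^{1,\infty}}\leq M_{1}$ to place one extra $\p u$ factor in $L^{\infty}$; this effectively trades one derivative on $u$ for the gain of one derivative on $w$, so $\|w\|_{H^{m}}$ is reduced to $\|w\|_{H^{m-1}}$.

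The main technical point, rather than a genuine obstacle, is the combinatorial bookkeeping in part (b): one must organize the Leibniz expansion so that in every surviving term at least one derivative falls on $u$ (which is precisely the commutator cancellation), and for \eqref{Moser4} one must further isolate a single $\p u$ factor to place in $L^{\infty}$. The reduction from a Lipschitz $\mathcal{O}$ to $\mathbb{R}^{n}$ via an extension operator is routine, and the endpoint cases $|\alpha|=0$ or $|\beta|=0$ require only elementary Sobolev embeddings rather than genuine interpolation.
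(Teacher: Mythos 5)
Your argument is essentially the classical interpolation proof and it can be made to work, but it is a genuinely different (and heavier) route than the one in the paper. The paper does not reprove part (a): it cites Stein for the reduction of a Lipschitz domain to $\mathbb{R}^n$ and Alinhac--G\'erard for \eqref{Moser1}--\eqref{Moser2} on $\mathbb{R}^n$, and then derives part (b) without any Fa\`a di Bruno formula: expanding $\p^{\alpha}[\p^{\beta},u]\p^{\gamma}w$ by Leibniz (so that at least one derivative always falls on $u$, and, for the sharper bound, peeling off exactly one derivative $\p^{\beta''}u$ with $|\beta''|=1$ to serve as the base function measured in $L^{\infty}$), it first obtains the commutator bounds for multiplication by a function $u$ directly from \eqref{Moser1}, and then simply substitutes $b(u)-b(0)$ for $u$, controlling $\|b(u)-b(0)\|_{H^{m}}$ by \eqref{Moser2} and $\|b(u)-b(0)\|_{W^{1,\infty}}$ by $C(M_{1})$. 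Your direct attack buys self-containedness but needs two repairs to be literally correct: (i) the H\"older exponents $2m/|\alpha|$, $2m/|\beta|$ are conjugate to $2$ only when $|\alpha|+|\beta|=m$; since $\mathcal{O}$ (e.g.\ $\varOmega_{T}$) is unbounded, for $|\alpha|+|\beta|<m$ you must interpolate at order $k:=|\alpha|+|\beta|$ and then use $\|\cdot\|_{H^{k}}\leq\|\cdot\|_{H^{m}}$; (ii) after Fa\`a di Bruno you cannot simply ``absorb the remaining $u$-factors into $L^{\infty}$'': neither $M_{0}$ nor $M_{1}$ controls $\|\p^{\gamma_{j}}u\|_{L^{\infty}}$ for $|\gamma_{j}|\geq 1$ (resp.\ $\geq 2$), so you need the multi-factor Moser product estimate, i.e.\ a joint H\"older/Gagliardo--Nirenberg/Young argument applied simultaneously to all factors of $\prod_{i}\p^{\gamma_{i}}u\,\p^{\nu}w$, which then yields the right-hand side of \eqref{Moser3}, and of \eqref{Moser4} after peeling one derivative and using $M_{1}$. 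With these standard fixes your proof goes through; the paper's reduction of the $b(u)$-commutator to the multiplication case via \eqref{Moser2} is simply the shorter path.
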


\begin{proof}
	We refer to {\sc Stein} \cite[\S VI.3--\S VI.4]{S70MR0290095}
	for reducing the analysis of this lemma to the case when $\mathcal{O}=\mathbb{R}^n$.
	See {\sc Alinhac--G{{\'e}}rard} \cite[pp.\;84--89]{AG07MR2304160}
	for the detailed proof of assertion (a) when $\mathcal{O}=\mathbb{R}^n$.
	Here we give the proof of \eqref{Moser3}--\eqref{Moser4} by means of \eqref{Moser1}--\eqref{Moser2}.
	It follows from \eqref{Moser1} that
	\begin{align}
	\notag \big\|\p^{\alpha}[\p^{\beta},u]\p^{\gamma}w\big\|_{L^{2}}
	&\le C \sum_{\alpha'\leq \alpha} \sum_{\substack{0< \beta'\leq \beta}}
	\big\|\p^{\alpha'} \p^{\beta'} u\,\p^{\alpha-\alpha'} \p^{\beta-\beta'}\p^{\gamma} w\big\|_{L^{2}}\\
	\label{Moser3'}
	&\le C \|u\|_{H^{m}}\|w\|_{L^{\infty}}+C \| u\|_{L^{\infty}}\|w\|_{H^{m}},\\[3mm]
	\notag \big\|\p^{\alpha}[\p^{\beta},u]\p^{\gamma}w\big\|_{L^{2}}
	&\le C \sum_{\alpha'\leq \alpha} \sum_{\substack{\beta''\leq \beta'\leq \beta\\ |\beta''|=1}}
	\big\|\p^{\alpha'} \p^{\beta'-\beta''}\big(\p^{\beta''} u\big)\,\p^{\alpha-\alpha'} \p^{\beta-\beta'}\p^{\gamma} w\big\|_{L^{2}}\\
	\label{Moser4'}
	&\le C \|u\|_{H^{m}}\|w\|_{L^{\infty}}+C \|u\|_{W^{1,\infty}}\|w\|_{H^{m-1}}.
	\end{align}
	Combining  \eqref{Moser4'} with \eqref{Moser2} yields
	\begin{align*}
	&\|\p^{\alpha}[\p^{\beta},b(u)]\p^{\gamma}w\|_{L^{2}} 
    =\|\p^{\alpha}[\p^{\beta},b(u)-b(0)]\p^{\gamma}w\|_{L^{2}}\\
	&\quad \le C \|b(u)-b(0)\|_{H^{m}}\|w\|_{L^{\infty}}+C\|b(u)-b(0)\|_{W^{1,\infty}}\|w\|_{H^{m-1}}\\
	&\quad \leq C(M_1) (\|w\|_{H^{m-1}}+\|u\|_{H^{m}}\|w\|_{L^{\infty}}).
	\end{align*}
	Inequality \eqref{Moser3} can be proved similarly from \eqref{Moser2} and \eqref{Moser3'}.
\qed\end{proof}

\subsection{Notations}
For convenience, we collect the following notations.
\begin{itemize}
\item[(i)] We will use letter $C$ to denote any universal positive constant.
Symbol $C(\cdot)$ denotes any generic positive constant
depending only on the quantities listed in the parenthesis.
Notice that constants $C$ and $C(\cdot)$ may vary at different occurrence.
We denote $A\lesssim B$ (or $B\gtrsim A$) if
$A \leq CB$ holds uniformly for some universal  positive constant $C$.
Symbol $A\sim B$ means that both $A\lesssim B$ and $B\lesssim  A$ hold.
	
\item[(ii)] Letter $d$ always denotes the spatial dimension.
	{Both the two and three dimensional cases ($d=2,3$) are considered.}
	Symbol $\varOmega$ stands for the half-space $\{x\in\mathbb{R}^d: x_1>0 \}$.
	Boundary $\p\varOmega:=\{x\in\mathbb{R}^d: x_1=0 \}$ is identified to $\mathbb{R}^{d-1}$.
	We write $\varOmega_t:=(-\infty,t)\times \varOmega$
	and $\omega_t:=(-\infty,t)\times\p\varOmega.$
	
\item[(iii)]  Symbol $\mathrm{D}$ will be used to denote
	\begin{align*}
	\mathrm{D}:=(\p_t,\p_1,\ldots,\p_d),
	\end{align*}
where $\p_t:=\frac{\p }{\p t}$ and $ \p_{\ell}:=\frac{\p }{\p x_{\ell}}$ are the partial differentials.
For any multi-index $\alpha=(\alpha_0,\alpha_1,\ldots,\alpha_d)\in \mathbb{N}^{d+1}$,
we define
\begin{align*}
	\mathrm{D}^{\alpha}:=\p_t^{\alpha_0}\p_1^{\alpha_1}\cdots\p_d^{\alpha_d}, \qquad 
	|\alpha|:=\alpha_0+\alpha_1+\cdots+\alpha_d.
\end{align*}
For $m\in\mathbb{N}$, we denote $\mathrm{D}^{m} :=\{\mathrm{D}^{\alpha}: |\alpha|= m\}$.
	
\item[(iv)]  Denote $\mathrm{D}_x:=(\p_1,\ldots,\p_d)$ as the gradient vector and
$\mathrm{D}_{\rm tan}:=(\p_t,\p_2,\ldots,\p_d)$ as the tangential derivative.
 We write
\begin{alignat*}{2}
	\mathrm{D}_{\rm tan}^{\beta}:=\p_t^{\beta_0}\p_2^{\beta_2}\cdots\p_d^{\beta_d},\qquad 
	|\beta|:=\beta_0+\beta_2+\cdots+\beta_d,
\end{alignat*}
for any multi-index $\beta=(\beta_0,\beta_2,\ldots,\beta_d)\in \mathbb{N}^d$.
We denote $\mathrm{D}_{x'}:=(\p_2,\ldots,\p_d)$.
\item[(v)]
For any nonnegative integer $m$, we introduce
\begin{align} \label{VERT.norm}
&\VERT u(t) \VERT_{m}
:=\Big(\sum_{|\alpha|\leq m} \|\mathrm{D}^{\alpha} u(t) \|_{L^{2} (\varOmega)}^2 \Big)^{1/2},\\
\label{VERT.tan}
&\VERT u(t)\VERT_{{\rm tan},\,m}
:= \Big(\sum_{|\beta|\leq m}\| \mathrm{D}_{\rm tan}^{\beta} u(t) \|_{L^2(\varOmega)}^2\Big)^{1/2},\\[1.5mm]
\label{C.ring}
&\mathring{\rm C}_m
:={1+\| (\mathring{V},  \mathring{\varPsi})\|_{H^{m}(\varOmega_T)}^2,}
\end{align}
so that
our formulas will be much shortened in the calculations.
	
\item[(vi)]  Recall
the partial differentials with respect to functions $\mathring{\varPhi}^{\pm}$
from the notations in \eqref{differential} to obtain
	\begin{align*}
	\p_t^{\mathring{\varPhi}^{\pm}}+\mathring{v}^{\pm}_{\ell}\p_{\ell}^{\mathring{\varPhi}^{\pm}}
	=\p_t+\mathring{w}^{\pm}_{\ell}\p_{\ell},
	\end{align*}
	where
\begin{align} \label{w.ring}
\mathring{w}^{\pm}_1
:=\frac{1}{\p_1\mathring{\varPhi}^{\pm}}(\mathring{v}_{N}^{\pm}-\p_t\mathring{\varPhi}^{\pm}),
    \qquad\,\,
	\mathring{w}^{\pm}_i:=\mathring{v}_i^{\pm} \quad\textrm{for $i=2,\ldots,d$}.
\end{align}
In view of condition \eqref{bas.c2}, we have
\begin{align} \label{w1.ring.bdy}
\mathring{w}^{\pm}_1=0 \qquad \textrm{on $\p\varOmega$}.
\end{align}
Let us  define
\begin{align} \label{partial0}
\p_0:=\p_t+\sum_{i=2}^{d}\mathring{v}_i^+\p_i  \qquad \textrm{on $\xbar{\varOmega_T}$},
\end{align}
which coincides with $\p_t+\mathring{w}^{\pm}_{\ell}\p_{\ell}$
on boundary $\p\varOmega$ as a result of \eqref{bas.c2} and \eqref{w1.ring.bdy}.

\item[(vii)]  For any nonnegative integer $m$,
a generic and smooth matrix-valued function of
$\{(\mathrm{D}^{\alpha} \mathring{V},\mathrm{D}^{\alpha}\mathring{\varPsi}):|\alpha|\leq m\}$
is denoted by $\mathring{\rm c}_m$,
and by $\underline{\mathring{\rm c}}_m$ if it vanishes at the origin.
The exact forms of $\mathring{\rm c}_m$ and $\underline{\mathring{\rm c}}_m$
may be different at each occurrence.
For instance,  the equations for $p^{\pm}$ in \eqref{ELP.1} can be written as
	\begin{align}
	\notag
	(\p_t+\mathring{w}_{\ell}^{\pm}\p_{\ell})p^{\pm}+\mathring{\rho}^{\pm}\mathring{c}_{\pm}^2\p_{\ell}^{\mathring{\varPhi}^{\pm}} v_{\ell}^{\pm}
	=\mathring{\rm c}_0 {f}+\underline{\mathring{\rm c}}_1 V,
	\end{align}
since $\mathcal{C}(\mathring{U},\mathring{\varPhi})$ are
$C^{\infty}$--functions of $(\mathring{V},\mathrm{D}\mathring{V}, \mathrm{D} \mathring{\varPsi})$ vanishing at the origin.
\end{itemize}

\section{Partial Homogenization and Reformulation}\label{sec.Homogenization}

It is more convenient to reformulate problem \eqref{ELP} into the case with homogeneous boundary conditions.
 To this end,
 noting that $g=\mathbb{B}'_{e}(\dot{V},\psi)\in H^{s+1/2}(\omega_T)$ vanishes in the past,
 we employ the trace theorem to find a regular function ${V}_{\natural}=({V}_{\natural}^+,{V}_{\natural}^-)^{\mathsf{T}}\in H^{s+1}(\varOmega_T)$
 vanishing in the past such that
 \begin{align}
 {\mathbb{B}'_e({V}_{\natural},0)\big|_{\omega_T}=g},\quad \ 
 \|{V}_{\natural} \|_{H^{m}(\varOmega_T)}\lesssim \|g\|_{H^{m-1/2}(\omega_T)}\ \  \mbox{for $m=1,\ldots, s+1.$}
 \label{V.natural.est}
 \end{align}
 Then the new unknowns $V_{\flat}^{\pm}:=\dot{V}^{\pm}-{V}_{\natural}^{\pm}$ solve problem \eqref{ELP} with zero boundary source term
 and new internal source terms $\tilde{f}^{\pm}$:
 \begin{subequations} \label{ELP2}
  \begin{alignat}{3}   \label{ELP2.1}
  &\mathbb{L}'_{e,\pm} V^{\pm}=\tilde{f}^{\pm}\qquad\, &&\textrm{if } x_1>0,\\
  &\mathbb{B}'_e (V,\psi)=0\qquad  &&\textrm{if } x_1=0, \label{ELP2.2}\\
  &({V},\psi)=0\qquad  &&\textrm{if } t<0, \label{ELP2.3}
  \end{alignat}
 \end{subequations}
where we have dropped index ``$\flat$'' for simplicity of notation,
operators $\mathbb{L}'_{e,\pm}$ and $\mathbb{B}'_e$ are defined by \eqref{Le.bb}--\eqref{Be.bb},
and
\begin{align} \label{f.tilde}
\tilde{f}^{\pm}:=f^{\pm}-L(\mathring{U}^{\pm},\mathring{\varPhi}^{\pm}){V}_{\natural}^{\pm}-\mathcal{C}(\mathring{U}^{\pm},\mathring{\varPhi}^{\pm}){V}_{\natural}^{\pm}    .
\end{align}

We introduce new unknowns $W^{\pm}$ in order to distinguish the noncharacteristic variables
from the others for problem \eqref{ELP2}. More precisely, we define
\begin{align}
\label{W.def.d}
\left\{ \begin{aligned}
&W_1^{\pm}:=p^{\pm},\quad 
W_2^{\pm}:=v^{\pm}\cdot \mathring{N}^{\pm},\quad 
W_{j+1}^{\pm}:=v_j^{\pm},\\
&W_{d+2}^{\pm}:=p^{\pm}-\mathring{\rho}^{\pm}\mathring{{F}}_{1N}^{\pm}{F}_{11}^{\pm},\quad
W_{d+j+1}^{\pm}:=\partial_j \mathring{\varPhi}^{\pm}{F}_{11}^{\pm}+ {F}_{j1}^{\pm},\\
& W_{jd+i+1}^{\pm}:=F_{ij}^{\pm},\quad
W_{d^2+d+2}^{\pm}:=S^{\pm}
\quad \textrm{for }i=1,\ldots,d,\ j=2,\ldots,d,
\end{aligned}\right.
\end{align}
where $\mathring{N}^{\pm}$ and $\mathring{{F}}_{1N}^{\pm}$ are given in \eqref{N.ring}.
Equivalently, we set
\begin{align}
\notag 
W^{\pm}:=\mathring{J}_{\pm}^{-1}V^{\pm},
\qquad  \mathring{J}_{\pm}:={J}(\mathring{U}^{\pm}, \mathring{\varPhi}^{\pm}),
\end{align}
where $J({U},  {\varPhi})$ is the $C^{\infty}$--function of $(U,\mathrm{D}\varPhi)$ defined as
 \begin{align} \notag
 {J}({U},  {\varPhi}):=
 {\small \begin{pmatrix}1 & 0 & 0 & 0 & 0 & 0  \\[2mm]
  0 & 1 & \partial_2{\varPhi}    & 0 & 0 & 0  \\[2mm]
  0 & 0 & 1 & 0 & 0 & 0    \\[2mm]
  \dfrac{1}{ {\rho}{{F}}_{1N}}  & 0 & 0 & -\dfrac{1}{{\rho}{{F}}_{1N}} & 0 & 0  \\[3.5mm]
  -\dfrac{\partial_2{\varPhi}}{   {\rho}{{F}}_{1N}} & 0 & 0  & \dfrac{\partial_2{\varPhi}}{{\rho}{{F}}_{1N}}  & 1 & 0  \\[3.5mm]
  \w0 & \w0 & \w0 & \w0 & \w0 & \w{\bm{I}_3}
  \end{pmatrix}}
 \qquad\,\, \textrm{if $d=2$},
 \end{align}
and
\begin{align} \notag
  {J}({U},  {\varPhi}):=
   {\small \begin{pmatrix}1 & 0 & 0 & 0 & 0 & 0 & 0 & 0  \\[2mm]
  0 & 1 & \partial_2{\varPhi}  & \partial_3{\varPhi}  & 0 & 0 & 0 & 0  \\[2mm]
  0 & 0 & 1 & 0 & 0 & 0 & 0 & 0  \\[2mm]
  0 & 0 & 0 & 1 & 0 & 0 & 0 & 0  \\[3mm]
  \dfrac{1}{ {\rho}{{F}}_{1N}} & 0 & 0 & 0 & -\dfrac{1}{{\rho}{{F}}_{1N}}& 0 & 0 & 0  \\[3.5mm]
  -\dfrac{\partial_2{\varPhi}}{   {\rho}{{F}}_{1N}} & 0 & 0 & 0 & \dfrac{\partial_2{\varPhi}}{{\rho}{{F}}_{1N}}  & 1& 0 & 0  \\[3.5mm]
  -\dfrac{\partial_3{\varPhi}}{   {\rho}{{F}}_{1N}} & 0 & 0 & 0 &
  \dfrac{\partial_3{\varPhi}}{{\rho}{{F}}_{1N}}  & 0 & 1& 0  \\[3.5mm]
  \w0 & \w0 & \w0 & \w0 & \w0 & \w0 & \w0 & \w{\bm{I}_7}
  \end{pmatrix}}
 \qquad\,\, \textrm{if $d=3$}.
  \end{align}

In terms of the new unknowns $W^{\pm}$, we obtain the equivalent formulation of problem \eqref{ELP2.1} as
 \begin{align} \label{ELP3.a}
 \mathring{\mathcal{A}}_0^{\pm}\partial_t W^{\pm}+\sum_{j=1}^{d}\mathring{\mathcal{A}}_j^{\pm}\partial_j W^{\pm}+\mathring{\mathcal{A}}_4^{\pm}W^{\pm}=\mathring{J}_{\pm}^{\mathsf{T}}\tilde{f}^{\pm}
 \qquad\,\, \textrm{in }\ \varOmega_T,
 \end{align}
 where
 $\mathring{\mathcal{A}}_{i}^{\pm}:=\mathcal{A}_i(\mathring{U}^{\pm}, \mathring{\varPhi}^{\pm})$,
 for $i=0,\ldots,d$, with
 \begin{align} \label{A.cal}
\left\{
\begin{aligned}
 \mathcal{A}_1(U,\varPhi)&:=J({U},  {\varPhi})^{\mathsf{T}}\widetilde{A}_1({U},  {\varPhi})J({U},  {\varPhi}),\\[1.5mm]
  \mathcal{A}_j(U,\varPhi)&:={J}({U},  {\varPhi})^{\mathsf{T}}A_j(U)J({U},  {\varPhi})\qquad\textrm{for }  j=0,2,\ldots,d,\\[1.5mm]
 \mathcal{A}_4({U},  {\varPhi})&:=J({U},  {\varPhi})^{\mathsf{T}}\! \left(L({U},  {\varPhi})J({U},  {\varPhi})+
\mathcal{C}({U},  {\varPhi})J({U},  {\varPhi})\right).
\end{aligned}\right.
 \end{align}
Note that the coefficient matrices $\mathring{\mathcal{A}}_j^{\pm}$,
for $j=0,\ldots,d$, are symmetric, and $\mathring{\mathcal{A}}_0^{\pm}$ are positive definite.
In particular, a straightforward calculation gives
\begin{align}
\label{A0.cal}
\mathcal{A}_0(\widebar{U}^{\pm},\widebar{\varPhi}^{\pm})
&=
  {\small \begin{pmatrix}
 \dfrac{1}{\bar{\rho}^{\pm} \bar{c}_{\pm}^2 } +\dfrac{1}{\bar{\rho}^{\pm} (\widebar{F}_{11}^{\pm})^2}
 &0 &-\dfrac{1}{\bar{\rho}^{\pm} (\widebar{F}_{11}^{\pm})^2} &0 &0\\[4mm]
 0&\bar{\rho}^{\pm}\bm{I}_{d}& 0&0&0\\[2mm]
 -\dfrac{1}{\bar{\rho}^{\pm} (\widebar{F}_{11}^{\pm})^2} &0 &
 \dfrac{1}{\bar{\rho}^{\pm} (\widebar{F}_{11}^{\pm})^2} &0&0\\[4mm]
 0&0&0&\bar{\rho}^{\pm}\bm{I}_{d^2-1}&0\\[3mm]
 \w{0}&\w{0}&\w{0}&\w{0}&\w{1}
  \end{pmatrix}},\\[2mm]
\label{A2.cal}
\mathcal{A}_2(\widebar{U}^{\pm},\widebar{\varPhi}^{\pm})
&=
{\small \begin{pmatrix}
	0&\bm{e}_2^{\mathsf{T}} &0 &0 &0\\[1mm]
	\bm{e}_2& \bm{O}_{d}&  \bm{O}_{d}& -\bar{\rho}^{\pm}  \widebar{F}_{22}\bm{I}_{d}&0\\[1mm]
	0 & \bm{O}_{d} &	 &&\\[1mm]
	0& -\bar{\rho}^{\pm}  \widebar{F}_{22}\bm{I}_{d}& & \bm{O}_{d^2+1}&\\[1mm]
	\w{0}&\w{0}&\w{\ }&\w{\ } &\w{\ }
	\end{pmatrix}},
\end{align}
and, for the three-dimensional case,
\begin{align}
\label{A3.cal}
\mathcal{A}_3(\widebar{U}^{\pm},\widebar{\varPhi}^{\pm})
&=
{\small \begin{pmatrix}
	0&\bm{e}_3^{\mathsf{T}} &0&0 &0 &0\\[1mm]
\bm{e}_3& \bm{O}_{3}&\bm{O}_{3}&\bm{O}_{3}& -\bar{\rho}^{\pm}  \widebar{F}_{33}\bm{I}_{3}&0\\[1mm]
0 & \bm{O}_{3} &\bm{O}_{3}	& \bm{O}_{3}&\bm{O}_{3}&0\\[1mm]
0 & \bm{O}_{3} &\bm{O}_{3}	& \bm{O}_{3}&\bm{O}_{3}&0\\[1mm]
0& -\bar{\rho}^{\pm}  \widebar{F}_{33}\bm{I}_{3}&\bm{O}_{3} &\bm{O}_{3}  &\bm{O}_{3} &0\\[1mm]
\z{0}&\z{0}&\z{0}&\z{0}&\z{0} &\z{0}
	\end{pmatrix}},
\end{align}
where
$\bar{\rho}^{\pm}:=(\det \widebar{\bm{F}}^{\pm})^{-1}$ are the background densities,
and $\bar{c}_{\pm}:= p_{\rho}(\bar{\rho}^{\pm},\widebar{ S }^{\pm})^{1/2}$ are  the background speeds of sound.
The explicit expressions \eqref{A0.cal}--\eqref{A3.cal} will be used
in the estimate of tangential derivatives.

We now compute the exact form of $\mathring{\mathcal{A}}_{1}^{\pm}$ on boundary $\p\varOmega$, which is necessary
for deriving the energy estimate of tangential derivatives.
We first infer from \eqref{bas.c2}  and  \eqref{bas.c3b}
that matrices $\widetilde{A}_1(\mathring{U}^{\pm},\mathring{\varPhi}^{\pm})$ satisfy
 \begin{align}
 \left.\widetilde{A}_1(\mathring{U}^{\pm},\mathring{\varPhi}^{\pm})\right|_{x_1=0}
 =\pm
   \left.{\small \begin{pmatrix}
  0 & (\mathring{N}^{\pm})^{\mathsf{T}}& 0 & 0 \\[1.5mm]
  \mathring{N}^{\pm} & \bm{O}_d & -\mathring{\rho}^{\pm}\mathring{F}_{1N} ^{\pm}\bm{I}_d & 0 \\[1.5mm]
  0 & -\mathring{\rho}^{\pm}\mathring{F}_{1N} ^{\pm}\bm{I}_d   & \bm{O}_d & 0 \\[1.5mm]
   0& 0   &  0 & {\bm{O}_{d^2-d+1 }}
  \end{pmatrix}}\right|_{x_1=0}.
 \label{A1.tilde.bdy}
 \end{align}
In light of \eqref{A1.tilde.bdy}, we can decompose the boundary matrices $\mathring{\mathcal{A}}_1^{\pm} $ as
\begin{align}\label{A1.cal.decom}
\mathring{\mathcal{A}}_1^{\pm}=\mathring{J}_{\pm}^{\mathsf{T}} \widetilde{A}_1(\mathring{U}^{\pm},\mathring{\varPhi}^{\pm})\mathring{J}_{\pm}=\mathring{\mathcal{A}}_{1a}^{\pm}+\mathring{\mathcal{A}}_{1b}^{\pm}
\qquad\,\textrm{with $\left.\mathring{\mathcal{A}}_{1b}^{\pm}\right|_{x_1=0}=0$},
\end{align}
where
\begin{align} \label{A.1a}
\mathring{\mathcal{A}}_{1a}^{\pm}:=
\pm\begin{pmatrix}\w0 &\w0 & \w0 & \w0 \\[1.5mm]
0 & \bm{O}_d & \bm{A}(\mathring{U}^{\pm},\mathring{\varPhi}^{\pm}) & 0    \\[1.5mm]
0 & \bm{A}(\mathring{U}^{\pm},\mathring{\varPhi}^{\pm})& \bm{O}_d  & 0    \\[1.5mm]
0 & 0 & 0 & \bm{O}_{d^2-d+1}
\end{pmatrix},
\end{align}
with
\begin{align} \label{A.bm}
\bm{A}(U,\varPhi):=
\mathrm{diag}\,(1,\,-  \rho F_{1N}\bm{I}_{d-1}).
\end{align}
The explicit expression of $\mathring{\mathcal{A}}_{1b}^{\pm}$ is of no interest. According to
the kernels of matrices $\mathring{\mathcal{A}}_{1a}^{\pm}$, we denote by
\begin{align}
\label{W.nc}
W_{\rm nc}^{\pm}:=(W_{2}^{\pm}, \ldots,W_{2d+1}^{\pm})^{\mathsf{T}}
\end{align}
the noncharacteristic parts of unknowns $W^{\pm}$,
and by
$$
W_{\rm c}^{\pm}:=(W_{1}^{\pm}, W_{2d+2}^{\pm},  \ldots,W_{d^2+d+2}^{\pm})^{\mathsf{T}}
$$
the characteristic parts of $W^{\pm}$.

We reformulate the boundary conditions \eqref{ELP2.2} for unknowns $W^{\pm}$ into
\begin{subequations} \label{ELP3.b}
 \begin{alignat}{2}
 \label{ELP3.b.1} &\p_0\psi= W_2^++\underline{\mathring{\rm c}}_1 \psi &\quad & \textrm{on }\ \omega_T  ,\\
 \label{ELP3.b.2}  &[W_{i+1}]=\underline{\mathring{\rm c}}_1\psi
 & \textrm{for } i=1,\ldots,d,\quad   & \textrm{on }\ \omega_T ,\\
 \label{ELP3.b.3} &[W_{d+2}]=[\mathring{F}_{11}] \p_{F_{ij}} \varrho(\mathring{\bm{F}}^+)F_{ij}^++\underline{\mathring{\rm c}}_1\psi\ \  &\quad & \textrm{on }\ \omega_T,\\
 \label{ELP3.b.4} &[W_{d+j+1}]=-[\mathring{F}_{11}] \p_j \psi+\underline{\mathring{\rm c}}_1\psi
  &\textrm{for } j=2,\ldots,d, \quad & \textrm{on }\ \omega_T  ,
 \end{alignat}
\end{subequations}
where $ \varrho(\bm{F})$ and $\p_0$ are defined by \eqref{varrho} and \eqref{partial0}, respectively.
Here we recall that symbol $\underline{\mathring{\rm c}}_m$ denotes a generic and smooth matrix-valued function of $\{(\mathrm{D}^{\alpha}\mathring{V},\, \mathrm{D}^{\alpha}\mathring{\varPsi}): |\alpha|\leq m \}$ vanishing at the origin.
It is worth mentioning that the boundary conditions \eqref{ELP3.b} depend upon the traces of $W^{\pm}$
not only through the noncharacteristic variables $W_{\rm nc}^{\pm}$ but also through
the characteristic variables $F_{ij}^+$ for $i,j=2,\ldots,d,$
which is a different situation from the standard one (see, {\it e.g.}, \cite[\S 4.1]{B-GS07MR2284507}).

\section{Estimate of the Normal Derivatives} \label{sec.normal}
This section is devoted to the proof of the following proposition.

\begin{proposition}   \label{lem.normal}
If the assumptions in Theorem {\rm \ref{thm2}} are satisfied, then
\begin{align}
 \VERT  W (t)\VERT_{s}^2
\lesssim \VERT W(t)\VERT_{{\rm tan},s}^2
 + \|    (   \tilde{f},W)\|_{H^s(\varOmega_t)}^2 +\mathring{\rm C}_{s+2}
\|    ( \tilde{f},W)\|_{L^{\infty}(\varOmega_t)}^2 ,
\label{normal.est}
\end{align}
where $\VERT  \cdot \VERT_{s}$, $ \VERT \cdot \VERT_{{\rm tan},s}$, and $\mathring{\rm C}_{s+2} $
are defined by \eqref{VERT.norm}--\eqref{C.ring}, respectively.
In addition,
\begin{align}
 \VERT  W (t)\VERT_{1} ^2
\lesssim  \VERT W(t)\VERT_{{\rm tan},1}^2+
 \|  ( \tilde{f},W)\|_{H^1(\varOmega_t)}^2  .
\label{normal.est'}
\end{align}
\end{proposition}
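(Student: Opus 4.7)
The plan is to split $W^{\pm}$ into three blocks—the noncharacteristic part $W_{\rm nc}^{\pm}$, the entropy $S^{\pm}=W_{d^{2}+d+2}^{\pm}$, and the residual characteristic components $W_{1}^{\pm}$ and $W_{jd+i+1}^{\pm}$ with $j\ge 2$—and to estimate $\partial_{1}$ on each block separately. A finite induction on the number of normal derivatives then upgrades the single-$\partial_{1}$ estimates to arbitrary order $s$.

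First, for the noncharacteristic block, the decomposition \eqref{A1.cal.decom}--\eqref{A.bm} shows that the corresponding sub-block of $\mathring{\mathcal{A}}_{1a}^{\pm}$ is nonsingular, while $\mathring{\mathcal{A}}_{1b}^{\pm}$ vanishes on $\partial\varOmega$ and is of size $O(K)$ in the interior. I would solve the interior system \eqref{ELP3.a} algebraically for $\partial_{1}W_{\rm nc}^{\pm}$, apply $\mathrm{D}^{\alpha}$ with $|\alpha|\le s-1$, and use the Moser inequalities \eqref{Moser1}--\eqref{Moser4} to commute derivatives past the coefficients. This yields a bound for $\|\partial_{1}\mathrm{D}^{\alpha}W_{\rm nc}^{\pm}\|_{L^{2}(\varOmega_t)}$ by a tangential norm, a strictly lower-order normal norm, the source $\tilde f^{\pm}$, and the tame remainder $\mathring{\rm C}_{s+2}\|(\tilde f,W)\|_{L^{\infty}}$. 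The $O(K)\,\partial_{1}W^{\pm}$ residue is absorbed by choosing $K$ small.

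Second, the entropy equation \eqref{sym.form.4}, when rewritten in the straightened coordinates, becomes a transport equation $(\partial_{t}+\mathring{w}_{\ell}^{\pm}\partial_{\ell})S^{\pm}=\mathring{\rm c}_{0}\tilde f+\underline{\mathring{\rm c}}_{1}V$; because $\mathring{w}_{1}^{\pm}|_{\partial\varOmega}=0$ by \eqref{w1.ring.bdy}, solving for $\partial_{1}S^{\pm}$ and then differentiating as in Step~1 controls $\partial_{1}\mathrm{D}^{\alpha}S^{\pm}$ by tangential derivatives. For the genuinely characteristic variables $W_{1}^{\pm}$ and $W_{jd+i+1}^{\pm}$, $j\ge 2$, the corresponding rows of $\mathring{\mathcal{A}}_{1}^{\pm}$ vanish and algebraic inversion fails. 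Here I would exploit the physical involutions \eqref{inv1} and \eqref{inv2} to introduce the auxiliary quantities $\varsigma,\eta,\zeta$ (the objects whose definitions appear in \eqref{varsigma}, \eqref{eta}, \eqref{zeta}), chosen precisely so that they are linear combinations of $\partial_{1}W_{1}^{\pm}$, $\partial_{1}W_{jd+i+1}^{\pm}$, and tangential derivatives of $W$. Because the involutions are preserved by the flow, $\varsigma,\eta,\zeta$ solve transport-type equations with tame sources, from which one derives bounds of the form \eqref{eta.est'}. Combined with the $W_{\rm nc}$ and $S$ estimates, this recovers $\partial_{1}W_{1}^{\pm}$ and $\partial_{1}W_{jd+i+1}^{\pm}$.

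Finally, a finite induction on the maximal number of $\partial_{1}$'s in $\mathrm{D}^{\alpha}$ converts $\VERT W(t)\VERT_{s}$ into $\VERT W(t)\VERT_{{\rm tan},s}$ plus sources plus the tame remainder, giving \eqref{normal.est}. For the sharper estimate \eqref{normal.est'} at $s=1$, a single application of the three steps suffices; the $\mathring{\rm C}_{s+2}\|\cdot\|_{L^{\infty}}$ term disappears because no Moser commutator $[\mathrm{D}^{\alpha},\text{coefficient}]$ with $|\alpha|\ge 2$ arises at this order, and \eqref{bas.bound} provides $L^{\infty}$-control of the coefficients directly. The main obstacle is the construction and estimation of $\varsigma,\eta,\zeta$: one must verify both that they genuinely span the missing normal derivatives of the degenerate characteristic variables and that their bounds remain uniform as $[\widebar F_{11}]\to 0$, in keeping with the uniformity claimed in Theorem~\ref{thm2}.
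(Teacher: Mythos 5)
Your architecture coincides with the paper's: invert the nondegenerate block $\bm{A}(\mathring{U},\mathring{\varPhi})=\mathrm{diag}(1,-\mathring{\rho}\mathring{F}_{1N}\bm{I}_{d-1})$ for $\p_1 W_{\rm nc}$, treat the entropy by its transport equation, recover $\p_1 W_1$ and $\p_1 F_{ij}$ through the involution quantities $\varsigma,\eta,\zeta$ (exactly the identities \eqref{varsigma.id2} and \eqref{eta.id1}--\eqref{zeta.id1}), and close by finite induction on the number of normal derivatives. However, your entropy step contains a genuine gap: the equation for $S^{\pm}$ is $(\p_t+\mathring{w}_{\ell}^{\pm}\p_{\ell})S^{\pm}=\mathring{\rm c}_0\tilde{f}+\underline{\mathring{\rm c}}_1 W$, and the coefficient of $\p_1 S^{\pm}$ is $\mathring{w}_1^{\pm}$, which vanishes on $\p\varOmega$ by \eqref{w1.ring.bdy} and is $O(K)$ in the interior. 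You therefore cannot ``solve for $\p_1 S^{\pm}$'' as you do for $W_{\rm nc}$: division by $\mathring{w}_1^{\pm}$ is impossible at the boundary and would in any case ruin uniformity as $K\to 0$. The fact $\mathring{w}_1^{\pm}|_{\p\varOmega}=0$ is used in the opposite way: apply $\mathrm{D}^{\alpha}$ (normal derivatives included) to the transport equation and run the $L^2$ energy estimate; the boundary flux $\int_{\p\varOmega}\mathring{w}_1|\mathrm{D}^{\alpha}S|^2$ vanishes and one obtains \eqref{S.est}--\eqref{S.est'} with only space--time norms on the right, no algebraic inversion and indeed no instantaneous tangential energy being needed. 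The same mechanism, which you do state correctly, is what makes your transport bounds for $\varsigma,\eta,\zeta$ legitimate.

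A second, smaller deviation: for the interior term $\mathring{\bm{B}}\mathring{\mathcal{A}}_{1b}\p_1 W$ you propose to absorb an $O(K)\|\p_1\mathrm{D}_{\rm tan}^{\beta}W\|_{L^2(\varOmega)}$ remainder by smallness of $K$, whereas the paper exploits $\mathring{\mathcal{A}}_{1b}|_{x_1=0}=0$ to bound the coefficient by $\sigma(x_1)$ and estimates the weighted quantities $\|\sigma\p_1^k\mathrm{D}_{\rm tan}^{\beta'}W\|_{L^2}$ by a boundary-term-free energy argument (\eqref{sigma.est'}--\eqref{sigma.est}). Your absorption route is viable, but only if you assemble the inequalities for all blocks ($W_{\rm nc}$, $W_1$, $F_{ij}$, $S$) at the same induction level before absorbing, because the remainder contains normal derivatives of the characteristic components, which are recovered only at the end of that level; as written, absorbing it inside the $W_{\rm nc}$ estimate alone is circular. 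With the entropy step corrected and the absorption organized in this way, the rest of your outline (Moser commutators, the uniformity of the coefficients in \eqref{varsigma.id2} and \eqref{eta.id1}--\eqref{zeta.id1} as $[\widebar{F}_{11}]\to 0$, and the low-order case \eqref{normal.est'}) matches the paper's proof.
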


In this section, we let
$\beta=(\beta_0,\beta_2,\ldots,\beta_d)\in \mathbb{N}^d$ be a multi-index
with  $|\beta|\leq s-1$.
The proof of this proposition is divided into the following five subsections.

\subsection{Estimate of the Noncharacteristic Variables}\label{sec.normal1}

In view of \eqref{ELP3.a} and \eqref{A1.cal.decom}--\eqref{A.1a}, we have
\begin{align}
\notag
\begin{pmatrix}
0\\
\p_1 W_{\rm nc}^{\pm}\\ 0
\end{pmatrix}
=\,&
-\mathring{\bm{B}}^{\pm} \mathring{\mathcal{A}}_0^{\pm}  \partial_t W^{\pm}
-\sum_{j=2}^d \mathring{\bm{B}}^{\pm}\mathring{\mathcal{A}}_j^{\pm}   \partial_j W^{\pm}\\
&
-\mathring{\bm{B}}^{\pm}\mathring{\mathcal{A}}_{1b}^{\pm}\partial_1 W^{\pm}
-\mathring{\bm{B}}^{\pm}\mathring{\mathcal{A}}_4^{\pm}W^{\pm}
+\mathring{\bm{B}}^{\pm}\mathring{J}_{\pm}^{\mathsf{T}}\tilde{f}^{\pm},
\label{Wnc.id}
\end{align}
where $\mathring{\bm{B}}^{\pm} :=\pm\bm{B}(\mathring{U}^{\pm},\mathring{\varPhi}^{\pm})$,
and $ \bm{B}(U,\varPhi)$ is defined by
\begin{align} \label{B.bm}
\bm{B}(U,\varPhi)
:=\begin{pmatrix}\w0 &\w0 & \w0 & \w0 \\[1mm]
0 & \bm{O}_d & \bm{A}(U,\varPhi)^{-1} & 0    \\[1mm]
0 & \bm{A}(U,\varPhi)^{-1}& \bm{O}_d  & 0    \\[1mm]
0 & 0 & 0 & \bm{O}_{d^2-d+1}
\end{pmatrix},
\end{align}
with $\bm{A}(U,\varPhi)$ given in \eqref{A.bm}.

Noting that  $ \bm{B}(U,\varPhi)$ and $\mathcal{A}_j(U,\varPhi)$
are  $C^{\infty}$--functions of $(U,\mathrm{D}\varPhi)$ for $j=0,\ldots,d$,
we  apply operator $\mathrm{D}_{\rm tan}^{\beta}:=\p_t^{\beta_0}\p_2^{\beta_2}\cdots\p_d^{\beta_d}$
to identity \eqref{Wnc.id} and deduce
\begin{align}
\notag
\| \p_1 \mathrm{D}_{\rm tan}^{\beta} W_{\rm nc} \|_{L^2(\varOmega)}^2\lesssim \;&
\|  \mathrm{D}_{\rm tan}^{\beta}  (\mathring{\rm c}_1 \mathrm{D}_{\rm tan} W)\|_{L^2(\varOmega)}^2
+\|  \mathrm{D}_{\rm tan}^{\beta}  (\mathring{\bm{B}} \mathring{\mathcal{A}}_{1b} \partial_1 W )\|_{L^2(\varOmega)}^2
\\
\label{Wnc.est1}
&+\|  \mathrm{D}_{\rm tan}^{\beta}  (\mathring{\bm{B}}\mathring{\mathcal{A}}_4 W)\|_{L^2(\varOmega)}^2
+\|  \mathrm{D}_{\rm tan}^{\beta}  (\mathring{\bm{B}}\mathring{J}^{\mathsf{T}}\tilde{f})\|_{L^2(\varOmega)}^2.
\end{align}
Here we recall that $ {\mathring{\rm c}}_m$ denotes a generic and smooth matrix-valued function
of $\{(\mathrm{D}^{\alpha}\mathring{V}, \mathrm{D}^{\alpha}\mathring{\varPsi}): |\alpha|\leq m \}$.

We integrate by parts to obtain
\begin{align}
 \VERT u(t) \VERT_{m-1}^2
\label{MTT.inequ}
\lesssim  \sum_{|\alpha|\leq m-1}
\int_{\varOmega_t} |\mathrm{D}^{\alpha} u(\tau,x)||\p_t \mathrm{D}^{\alpha} u(\tau,x)| \mathrm{d }x \mathrm{d}\tau  \lesssim \|u\|^2_{H^{m}(\varOmega_t)}.
\end{align}
By virtue of \eqref{MTT.inequ} and the Moser-type calculus inequality \eqref{Moser4}, we have
\begin{align}
\notag  \|  \mathrm{D}_{\rm tan}^{\beta}  (\mathring{\rm c}_1 \mathrm{D}_{\rm tan} W)\|_{L^2(\varOmega)}^2
 &\lesssim \|\mathring{\rm c}_1  \mathrm{D}_{\rm tan}^{\beta}  \mathrm{D}_{\rm tan} W+
 [\mathrm{D}_{\rm tan}^{\beta}  ,\,\mathring{\rm c}_1] \mathrm{D}_{\rm tan} W\|_{L^2(\varOmega)}^2\\[0.5mm]
\notag &  \lesssim    \VERT W\VERT_{{\rm tan},s}^2
+ \|   [\mathrm{D}_{\rm tan}^{\beta}  ,\,\mathring{\rm c}_1] \mathrm{D}_{\rm tan} W\|_{H^1(\varOmega_t)}^2\\
&    \lesssim  \VERT W\VERT_{{\rm tan},s}^2
+ \| W\|_{H^s(\varOmega_t)}^2
+ \mathring{\rm C}_{s+2} \|    W\|_{L^{\infty}(\varOmega_t)}^2 .
\label{Wnc.est1a}
\end{align}

Since $ \bm{B}(U,\varPhi)$ and  $J(U,\varPhi)$ are $C^{\infty}$--functions of $(U,\mathrm{D}\varPhi)$,
and $\mathcal{A}_4({U},  {\varPhi})$ is a $C^{\infty}$--function of $(U,\mathrm{D}\varPhi,\mathrm{D}U,\mathrm{D}^2\varPhi)$,
we use \eqref{MTT.inequ} and the Moser-type calculus inequality \eqref{Moser3} to obtain
\begin{align}
\notag &\|\mathrm{D}_{\rm tan}^{\beta}(\mathring{\bm{B}} \mathring{\mathcal{A}}_4 W )\|_{L^2(\varOmega)}^2
+\|\mathrm{D}_{\rm tan}^{\beta}(\mathring{\bm{B}} \mathring{J}^{\mathsf{T}}\tilde{f})\|_{L^2(\varOmega)}^2\\
 &\qquad \lesssim \|\mathring{\rm c}_2  W \|_{H^s(\varOmega_t)}^2
+\|\mathring{\rm c}_1  \tilde{f} \|_{H^s(\varOmega_t)}^2 \notag \\
\label{Wnc.est1b}
&\qquad  \lesssim  \|  ( \tilde{f},W)\|_{H^s(\varOmega_t)}^2
+ \mathring{\rm C}_{s+2} \|    ( \tilde{f},W)\|_{L^{\infty}(\varOmega_t)}^2 .
\end{align}

Notice  from \eqref{bas.relation} and \eqref{bas.bound}
that the $W^{2,\infty}(\varOmega_T)$--norm of  $(\mathring{V},\,\mathring{\varPsi})$
is bounded by $CK$ for some positive constant $C$ depending only on $\chi$.
In view of \eqref{A1.cal.decom}, we have
$$\|\p_1(\mathring{\bm{B}}^{\pm}\mathring{\mathcal{A}}_{1b}^{\pm})\|_{L^{\infty}(\varOmega_T)}
\lesssim \|\mathring{\rm c}_2\|_{L^{\infty}(\varOmega_T)} \lesssim 1,
\qquad \mathring{\bm{B}}^{\pm}\mathring{\mathcal{A}}_{1b}^{\pm}\big|_{x_1=0}=0.
$$
Then we integrate by parts to obtain
\begin{align}\label{Wnc.est1e}
\big\|\big(\mathring{\bm{B}}^{\pm}\mathring{\mathcal{A}}_{1b}^{\pm}\big)(\cdot,x_1,\cdot)\big\|_{L^{\infty}([0,T]\times \mathbb{R}^{d-1})}
\lesssim \sigma(x_1)\qquad\,\, \textrm{for $x_1\geq 0$},
\end{align}
where $\sigma$ is an increasing function of $x_1$ satisfying
\begin{align}  \label{sigma}
\sigma=\sigma(x_1)  \in C^{\infty}(\mathbb{R}),\qquad
\sigma(x_1)=\begin{cases}
x_1&\,\,\,\, \textrm{for $0\leq x_1\leq 1$},\\
2&\,\,\,\, \textrm{for $x_1\geq 4$}.
\end{cases}
\end{align}
Utilizing the estimate above along with \eqref{MTT.inequ} and \eqref{Moser4},
we infer
\begin{align}
\notag& \|\mathrm{D}_{\rm tan}^{\beta }(\mathring{\bm{B}}\mathring{\mathcal{A}}_{1b}\partial_1 W)\|_{L^2(\varOmega)}^2\\
\notag &\qquad \lesssim \big\| \mathring{\bm{B}}\mathring{\mathcal{A}}_{1b}\mathrm{D}_{\rm tan}^{\beta } \partial_1 W
+[\mathrm{D}_{\rm tan}^{\beta},\mathring{\bm{B}}\mathring{\mathcal{A}}_{1b}]\partial_1 W \big\|_{L^2(\varOmega)}^2\\[0.5mm]
\notag &\qquad
\lesssim   \big\|\sigma \mathrm{D}_{\rm tan}^{\beta } \partial_1 W  \big\|_{L^2(\varOmega)}^2
+\big\| [\mathrm{D}_{\rm tan}^{\beta },\mathring{\bm{B}}\mathring{\mathcal{A}}_{1b}]\partial_1 W \big\|_{H^1(\varOmega_t)}^2\\
&\qquad\lesssim  \|\sigma\p_1\mathrm{D}_{\rm tan}^{\beta }W\|_{L^2(\varOmega)}^2
+\| W\|_{H^s(\varOmega_t)}^2
+ \mathring{\rm C}_{s+2} \|W\|_{L^{\infty}(\varOmega_t)}^2.
\label{Wnc.est1c}
\end{align}
Apply operator $\sigma \p_1^k\mathrm{D}_{\rm tan}^{\beta'}$ with $k+|\beta'|\leq s$
to system \eqref{ELP3.a}  and  employ the standard arguments of the energy method to deduce
\begin{align}\label{sigma.est'}
  & \|\sigma \p_1^k\mathrm{D}_{\rm tan}^{\beta'} W(t)\|_{L^2(\varOmega)}^2
 \lesssim   \|  ( \tilde{f},W)\|_{H^1(\varOmega_t)}^2\qquad   \textrm{for $k+|\beta'|\leq 1$},
\\
\label{sigma.est}
&\|\sigma \p_1^k\mathrm{D}_{\rm tan}^{\beta'} W(t)\|_{L^2(\varOmega)}^2
\lesssim \|  ( \tilde{f},W)\|_{H^s(\varOmega_t)}^2
+ \mathring{\rm C}_{s+2} \|    ( \tilde{f},W)\|_{L^{\infty}(\varOmega_t)}^2
 \   \textrm{for $k+|\beta'|\leq s$}.
\end{align}
Plugging \eqref{Wnc.est1a}--\eqref{Wnc.est1b}, \eqref{Wnc.est1c},
and \eqref{sigma.est} into \eqref{Wnc.est1} implies
\begin{align}
\notag
&\sum_{|\beta|\leq s-1 }\| \p_1 \mathrm{D}_{\rm tan}^{\beta} W_{\rm nc}(t) \|_{L^2(\varOmega)}^2  \\
&\qquad \lesssim  \VERT W(t)\VERT_{{\rm tan},s}^2  + \|  ( \tilde{f},W)\|_{H^s(\varOmega_t)}^2
+ \mathring{\rm C}_{s+2} \|    ( \tilde{f},W)\|_{L^{\infty}(\varOmega_t)}^2 .
\label{Wnc.est}
\end{align}
Moreover, from \eqref{Wnc.est1} with $\beta=0$, \eqref{Wnc.est1e}, and \eqref{sigma.est'},
we have
\begin{align}
\| \p_1   W_{\rm nc}(t) \|_{L^2(\varOmega)}^2
\lesssim  \VERT W(t)\VERT_{{\rm tan},1}^2 +\|  ( \tilde{f},W)\|_{H^1(\varOmega_t)}^2.
\label{Wnc.est'}
\end{align}

\subsection{Estimate of the Characteristic Variables $S^{\pm}$}\label{sec.entropy}

The next lemma gives the estimate of the characteristic variables $W_{d^2+d+2}^{\pm}$
that are entropies $S^{\pm}$.

\begin{lemma}\label{lem.S}
 If the assumptions in Theorem {\rm \ref{thm2}} are satisfied, then
 \begin{align}
 &\VERT S^{\pm}(t)\VERT_{s}^2 \lesssim \|  ( \tilde{f},W)\|_{H^s(\varOmega_t)}^2
 + \mathring{\rm C}_{s+2} \|    ( \tilde{f},W)\|_{L^{\infty}(\varOmega_t)}^2,
  \label{S.est}
\\
&\VERT S^{\pm}(t)\VERT_{1}  \lesssim \|  ( \tilde{f},W)\|_{H^1(\varOmega_t)} .
\label{S.est'}
\end{align}
\end{lemma}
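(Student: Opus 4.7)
The plan is to exploit that $S^{\pm} = W_{d^2+d+2}^{\pm}$ satisfies a scalar transport equation whose normal characteristic velocity vanishes on $\partial\varOmega$. Isolating the last component of system \eqref{ELP3.a} (which comes from linearizing \eqref{sym.form.4}) and rewriting the streamline derivative via the notation in item (vi), one obtains
\begin{align*}
(\partial_t + \mathring{w}_{\ell}^{\pm}\partial_{\ell}) S^{\pm} = \mathring{\rm c}_0 \tilde{f}^{\pm} + \underline{\mathring{\rm c}}_1 W^{\pm} \qquad \textrm{in } \varOmega_T,
\end{align*}
where the characteristic field $\mathring{w}^{\pm}$ is defined by \eqref{w.ring} and, crucially, satisfies $\mathring{w}_1^{\pm}|_{\partial\varOmega} = 0$ by virtue of \eqref{w1.ring.bdy}. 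Consequently, $\partial\varOmega$ is characteristic for the $S^{\pm}$-equation and no boundary condition is required.

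For each multi-index $\alpha\in\mathbb{N}^{d+1}$ with $|\alpha|\leq s$ (including those containing $\partial_1$), I would apply $\mathrm{D}^{\alpha}$ to the transport equation, multiply by $\mathrm{D}^{\alpha}S^{\pm}$, and integrate over $\varOmega_t$. After integration by parts in the spatial variables, the only boundary contribution is
\begin{align*}
\frac{1}{2}\int_{\omega_t}\mathring{w}_1^{\pm}\,(\mathrm{D}^{\alpha}S^{\pm})^2 = 0,
\end{align*}
thanks to \eqref{w1.ring.bdy}. The interior terms arising from $\mathrm{div}\,\mathring{w}^{\pm}$ are absorbed by the smallness \eqref{bas.bound}, while the commutators $[\mathrm{D}^{\alpha},\mathring{w}_{\ell}^{\pm}\partial_{\ell}]S^{\pm}$ and the derivatives of $\mathring{\rm c}_0 \tilde{f}^{\pm}$ and $\underline{\mathring{\rm c}}_1 W^{\pm}$ are bounded by the Moser-type calculus inequalities \eqref{Moser3}--\eqref{Moser4}. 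At top order this yields a right-hand side controlled by $\|(\tilde{f},W)\|_{H^s(\varOmega_t)}^2 + \mathring{\rm C}_{s+2}\|(\tilde{f},W)\|_{L^{\infty}(\varOmega_t)}^2 + \|S^{\pm}\|_{H^{s-1}(\varOmega_t)}^2$.

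Since $S^{\pm}$ vanishes in the past by \eqref{ELP2.3}, summing over $|\alpha|\leq s$ and applying Gronwall's inequality in $t$ yields \eqref{S.est}; an induction on $s$ handles the lower-order remainder $\|S^{\pm}\|_{H^{s-1}}$. For \eqref{S.est'}, the same computation with $s=1$ avoids the tame factor $\mathring{\rm C}_{s+2}$, because the commutator at this order involves only $\|\mathring{w}^{\pm}\|_{W^{1,\infty}}\lesssim K$, controlled by \eqref{bas.bound}. The single point requiring care is the vanishing of the boundary integral when $\alpha$ contains $\partial_1$: what appears in the boundary term is $\mathring{w}_1^{\pm}$ multiplied by $(\mathrm{D}^{\alpha}S^{\pm})^2|_{x_1=0}$, and the former vanishes identically on $\partial\varOmega$; the commutators $[\mathrm{D}^{\alpha},\mathring{w}_1^{\pm}\partial_1]$ generate only interior error terms handled by Moser calculus.
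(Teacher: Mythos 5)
Your proposal is correct and follows essentially the same route as the paper: isolate the scalar transport equation for $S^{\pm}$ with characteristic speed $\mathring{w}^{\pm}$, note $\mathring{w}_1^{\pm}|_{x_1=0}=0$ so the boundary term vanishes for every $\mathrm{D}^{\alpha}$ with $|\alpha|\leq s$, and close with the Moser-type commutator estimates \eqref{Moser3}--\eqref{Moser4}. The only (harmless) difference is that your final Gr\"onwall/induction step is superfluous, since $S^{\pm}$ is a component of $W$ and the term $\|S^{\pm}\|_{H^{s-1}(\varOmega_t)}$ is already absorbed into $\|(\tilde f,W)\|_{H^s(\varOmega_t)}$ on the right-hand side.
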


\begin{proof}
Since matrices $\mathcal{C}(\mathring{U}^{\pm},\mathring{\varPhi}^{\pm})$
are $C^{\infty}$--functions of $(\mathring{V},\mathrm{D}\mathring{V}, \mathrm{D}\mathring{\varPsi})$ vanishing at the origin,	
 we can write the equations for $S^{\pm}$ in \eqref{ELP2.1} as
 \begin{align*}
 (\p_t+\mathring{w}^{\pm}_{\ell}\p_{\ell})S^{\pm}
 = \mathring{\rm c}_0 \tilde{f}+  \underline{\mathring{\rm c}}_1 W \qquad\,\, \textrm{in }\ \varOmega,
 \end{align*}
 where $\mathring{w}^{\pm}_{\ell}$, $\ell=1,\ldots,d$, are given in \eqref{w.ring}.
 Let $\alpha:=(\alpha_0,\alpha_1,\ldots,\alpha_d)\in\mathbb{N}^{d+1}$ be any multi-index
 with $|\alpha|:=\alpha_0+\alpha_1+\cdots+\alpha_d\leq s$.
 Apply operator $\mathrm{D}^{\alpha}:=\p_t^{\alpha_0}\p_1^{\alpha_1}\cdots\p_d^{\alpha_d}$ to the  equations above
 and multiply the resulting identities by $\mathrm{D}^{\alpha} S^{\pm}$ respectively to find
 \begin{align*}
 &\p_t \big|\mathrm{D}^{\alpha} S^{\pm}\big|^2
 +\p_{\ell} \big(\mathring{w}^{\pm}_{\ell} \big|\mathrm{D}^{\alpha}S^{\pm}\big|^2\big)
 -\p_{\ell} \mathring{w}^{\pm}_{\ell} \big|\mathrm{D}^{\alpha}S^{\pm}\big|^2\\
 &\qquad =2\mathrm{D}^{\alpha} S^{\pm}\big(
 \mathrm{D}^{\alpha} (\mathring{\rm c}_0 \tilde{f})
 +  \mathrm{D}^{\alpha}(\mathring{\rm c}_1 W)
 -[\mathrm{D}^{\alpha},\, \mathring{w}^{\pm}_{\ell}]\p_{\ell} S^{\pm} \big).
 \end{align*}
Note that the $W^{2,\infty}(\varOmega_T)$--norm of  $(\mathring{V},\,\mathring{\varPsi})$
is bounded by $CK$ for some positive constant $C$ depending only on $\chi$.
 By virtue of \eqref{w1.ring.bdy},
 we can obtain \eqref{S.est}--\eqref{S.est'} by integrating the last identities
 over $\varOmega_t$ and applying the Moser-type
 calculus inequalities \eqref{Moser3}--\eqref{Moser4}.
\qed\end{proof}

\subsection{Estimate of the Characteristic Variables $W_1^{\pm}$}\label{sec.W1}
To compensate the loss of the normal derivatives of the characteristic variables $W^{\pm}_1=p^{\pm}$,
inspired by involutions \eqref{inv2},  we introduce linearized divergences $\varsigma^{\pm}$ by
\begin{align} \label{varsigma}
\varsigma^{\pm}:=
\p_{i }^{\mathring{\varPhi}^{\pm}}\big(
\mathring{c}_{\pm}^{-2}\mathring{{F}}_{i 1}^{\pm}p^{\pm}
+\mathring{\rho}^{\pm}{F}_{i 1}^{\pm}\big),
\end{align}
where $\p_{i}^{\mathring{\varPhi}^{\pm}}$, $i=1,\ldots,d,$ are defined by \eqref{differential},
and $\mathring{c}_{\pm}:= p_{\rho}(\mathring{\rho}^{\pm},\mathring{ S }^{\pm})^{1/2}$ are the basic speeds of sound.
See {\sc Trakhinin} \cite{T18MR3721411} for a slightly different definition of the linearized divergences.

Then we obtain the following estimate for $\varsigma^{\pm}$.

\begin{lemma}\label{lem.varsigma}
 If the assumptions in Theorem {\rm \ref{thm2}} are satisfied, then
 \begin{align} \label{varsigma.est}
& \VERT \varsigma^{\pm}(t) \VERT_{s-1}^2
\lesssim
 \|(\tilde{f} ,W)\|_{H^{s}(\varOmega_t) }^2+ \mathring{\rm C}_{s+2}\|(\tilde{f} ,W)\|_{L^{\infty} (\varOmega_t)}^2,
\\
 \label{varsigma.est'}
 &\| \varsigma^{\pm}(t) \|_{L^2(\varOmega)}
\lesssim
  \|(\tilde{f} ,W)\|_{H^{1}(\varOmega_t) }.
\end{align}
\end{lemma}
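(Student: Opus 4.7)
The quantity $\varsigma^{\pm}$ is nothing but the natural linearization of the divergence constraint $\p_\ell^{\varPhi^{\pm}}(\rho^{\pm}F_{\ell 1}^{\pm})=0$, which by Proposition~\ref{pro1} is an involution for the nonlinear problem. Since involutions are propagated by the dynamics, their linearization must satisfy a pure transport equation driven only by the source terms $\tilde{f}^{\pm}$ and lower-order contributions in $W$. The proof should therefore parallel that of Lemma~\ref{lem.S} for the entropy, with the extra derivative appearing in \eqref{varsigma} accounting for the shift from $H^{s}$ to $H^{s-1}$ on the left-hand side of \eqref{varsigma.est}.

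\textbf{Step 1: derivation of a transport equation for $\varsigma^{\pm}$.} Starting from the linearized system \eqref{ELP2.1}, I would isolate the equations coming from the $p^{\pm}$-row \eqref{sym.form.1} and the $F_{i1}^{\pm}$-rows \eqref{sym.form.3}, multiply the former by $\mathring{c}_{\pm}^{-2}\mathring{F}_{i1}^{\pm}$ and the latter by $\mathring{\rho}^{\pm}$, sum in $i$, and apply $\p_i^{\mathring{\varPhi}^{\pm}}$. The principal-symbol contributions (second derivatives of $v^{\pm}$) must cancel identically by the same algebraic mechanism that makes $\dive^{\varPhi}(\rho F_1)$ an involution for the nonlinear problem. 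What survives, after collecting commutators between $\p_i^{\mathring{\varPhi}^{\pm}}$ and the coefficients (which involve $\mathrm{D}\mathring{V}$ and $\mathrm{D}^{2}\mathring{\varPsi}$), is an equation of the schematic form
\begin{equation*}
(\p_t+\mathring{w}_{\ell}^{\pm}\p_{\ell})\varsigma^{\pm}=\mathring{\mathrm c}_{2}\,\mathrm{D}\tilde{f}+\mathring{\mathrm c}_{2}\,\mathrm{D}W+\underline{\mathring{\mathrm c}}_{3}W,
\end{equation*}
where $\mathring{w}_{\ell}^{\pm}$ are as in \eqref{w.ring} so that $\mathring{w}_{1}^{\pm}|_{\p\varOmega}=0$ by \eqref{w1.ring.bdy}.

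\textbf{Step 2: energy estimate.} With the transport equation in hand, the rest is a direct imitation of the proof of Lemma~\ref{lem.S}. For \eqref{varsigma.est'} I multiply the equation by $\varsigma^{\pm}$ and integrate over $\varOmega_t$; the convective boundary contribution on $\p\varOmega$ vanishes thanks to $\mathring{w}_{1}^{\pm}|_{\p\varOmega}=0$, and the right-hand side is bounded directly by $\|(\tilde{f},W)\|_{H^{1}(\varOmega_t)}^{2}$, so Gronwall yields \eqref{varsigma.est'}. For \eqref{varsigma.est} I apply $\mathrm{D}^{\alpha}$ with $|\alpha|\le s-1$, multiply by $\mathrm{D}^{\alpha}\varsigma^{\pm}$, and integrate by parts as in Lemma~\ref{lem.S}. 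The commutator terms $[\mathrm{D}^{\alpha},\mathring{w}_{\ell}^{\pm}]\p_{\ell}\varsigma^{\pm}$ and the commutators inside the right-hand side are estimated by the Moser-type inequalities \eqref{Moser3}--\eqref{Moser4}, which account for the $\mathring{\mathrm C}_{s+2}\|(\tilde f,W)\|_{L^{\infty}}^{2}$ contribution. Because the source in the transport equation already carries one spatial derivative of $(\tilde{f},W)$, $s-1$ derivatives of $\varsigma^{\pm}$ are controlled by $\|(\tilde{f},W)\|_{H^{s}(\varOmega_t)}^{2}$, giving precisely \eqref{varsigma.est}.

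\textbf{Main obstacle.} The real work lies in Step~1: one must verify that the top-order cancellation persists in the linearized setting, even though $(\mathring{U}^{\pm},\mathring{\varPhi}^{\pm})$ is merely a perturbation of the background rather than an exact solution. The computation is essentially the linearization at $(\mathring U,\mathring\varPhi)$ of the identity underlying Proposition~\ref{pro1}, and the careful bookkeeping of the commutators between $\p_{i}^{\mathring{\varPhi}^{\pm}}$ and the coefficients $\mathring{c}_{\pm}^{-2}\mathring{F}_{i1}^{\pm}$, $\mathring{\rho}^{\pm}$, together with the good-unknown correction implicit in \eqref{good}, is where the argument becomes technically involved. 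Once the transport equation is established, Step~2 is routine.
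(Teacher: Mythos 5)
Your proposal follows essentially the same route as the paper: combine the linearized $p^{\pm}$- and $F_{i1}^{\pm}$-equations with weights $\mathring{c}_{\pm}^{-2}\mathring{F}_{i1}^{\pm}$ and $\mathring{\rho}^{\pm}$, apply $\p_i^{\mathring{\varPhi}^{\pm}}$ so that the second-order terms in $v^{\pm}$ reduce (not vanish identically, but collapse to the commutator $\mathring{\rho}^{\pm}\mathring{F}_{i1}^{\pm}[\p_{\ell}^{\mathring{\varPhi}^{\pm}},\p_{i}^{\mathring{\varPhi}^{\pm}}]v_{\ell}^{\pm}$, which is exactly the lower-order remainder $\mathring{\rm c}_2\mathrm{D}W+\mathring{\rm c}_2 W$ you allow for), and then run the transport-equation energy argument of Lemma~\ref{lem.S} using $\mathring{w}_1^{\pm}|_{\p\varOmega}=0$ and the Moser inequalities, with the decomposition $\varsigma^{\pm}=\mathring{\rm c}_1 W+\mathring{\rm c}_1\mathrm{D}W$ converting the Gr\"onwall/commutator terms in $\varsigma^{\pm}$ into norms of $W$. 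This matches the paper's proof in both structure and estimates, so the proposal is correct.
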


\begin{proof}
The equations for $\bm{F}^{\pm}$ and  $p^{\pm}$  in \eqref{ELP2.1} read
\begin{align}
\label{F.equ}&
(\p_t+\mathring{w}_{\ell}^{\pm}\p_{\ell}){F}_{ij}^{\pm}
- \mathring{{F}}_{\ell j}^{\pm}\p_{\ell}^{\mathring{\varPhi}^{\pm}} v_i^{\pm}
=\mathring{\rm c}_0  \tilde{f}+\underline{\mathring{\rm c}}_1 W,\\
\label{p.equ}&
(\p_t+\mathring{w}_{\ell}^{\pm}\p_{\ell})p^{\pm}
+\mathring{\rho}^{\pm}\mathring{c}_{\pm}^2\p_{\ell}^{\mathring{\varPhi}^{\pm}} v_{\ell}^{\pm}
=\mathring{\rm c}_0\tilde{f}+\underline{\mathring{\rm c}}_1 W.
\end{align}
 In view of these last equations, we compute
 \begin{align} \notag
 &(\p_t+\mathring{w}_{\ell}^{\pm}\p_{\ell})\left(
 \mathring{c}_{\pm}^{-2}\mathring{ {F}}_{i1}^{\pm} p ^{\pm}
 +  \mathring{\rho}^{\pm} {F}_{i1}^{\pm}  \right)
 =\mathring{\rho}^{\pm} \mathring{{F}}_{\ell 1}^{\pm} \p_{\ell}^{\mathring{\varPhi}^{\pm} }v_i^{\pm}
 -\mathring{\rho}^{\pm} \mathring{{F}}_{i1}^{\pm} \p_{\ell}^{\mathring{\varPhi}^{\pm} }v_{\ell}^{\pm}
 +\mathring{\rm c}_0  \tilde{f}+\mathring{\rm c}_1 W.
 \end{align}
 Performing operators $\p_{i }^{\mathring{\varPhi}^{\pm}}$ to the  identities above
 and using
 \begin{align*}
 &\mathring{\rho}^{\pm} \mathring{{F}}_{\ell 1}^{\pm} \p_{i}^{\mathring{\varPhi}^{\pm} }\p_{\ell}^{\mathring{\varPhi}^{\pm} }v_i^{\pm}
 -\mathring{\rho}^{\pm} \mathring{{F}}_{i1}^{\pm}\p_{i}^{\mathring{\varPhi}^{\pm} } \p_{\ell}^{\mathring{\varPhi}^{\pm} }v_{\ell}^{\pm}\\
 &\quad =\mathring{\rho}^{\pm} \mathring{{F}}_{i 1}^{\pm} \big[\p_{\ell}^{\mathring{\varPhi}^{\pm} },\p_{i}^{\mathring{\varPhi}^{\pm} }\big]v_{\ell}^{\pm}
 =\mathring{\rm c}_2 \mathrm{D}V=\mathring{\rm c}_2 \mathrm{D} (\mathring{J} W)
=\mathring{\rm c}_2 \mathrm{D}W +\mathring{\rm c}_2 W,
\end{align*}
we have
\begin{align} \label{varsigma.eq}
 (\p_t+\mathring{w}_{\ell}^{\pm}\p_{\ell})\varsigma ^{\pm}=\mathring{\rm c}_1 \mathrm{D}\tilde{f}+ \mathring{\rm c}_1 \tilde{f}+\mathring{\rm c}_2 \mathrm{D}W + \mathring{\rm c}_2 W.
\end{align}
 Apply operator $\mathrm{D}^{\alpha}$ with $|\alpha|\leq s-1$ to equations \eqref{varsigma.eq},
 multiply the resulting identities by $\mathrm{D}^{\alpha} \varsigma^{\pm}$ respectively,
 and take the integration over $\varOmega_t$ to obtain
\begin{align}
 \notag \|\mathrm{D}^{\alpha} \varsigma^{\pm}(t)\|_{L^2 (\varOmega)}^2
  \lesssim\,&
    \left(1+\|\mathrm{D}\mathring{w}\|_{L^{\infty}(\varOmega_t) }\right)
  \|\mathrm{D}^{\alpha}\varsigma^{\pm}\|_{L^{2}(\varOmega_t)}^2
 +\|[\mathrm{D}^{\alpha},\, \mathring{w}_{\ell}^{\pm} ]\p_{\ell} \varsigma^{\pm} \|_{L^{2}(\varOmega_t)}^2\\
 \label{varsigma.est1}&
 +\big\|\mathrm{D}^{\alpha}\big(
  \mathring{\rm c}_1 \mathrm{D}\tilde{f} + \mathring{\rm c}_1 \tilde{f}
  +\mathring{\rm c}_2 \mathrm{D}W + \mathring{\rm c}_2 W\big) \big\|_{L^{2}(\varOmega_t)}^2.
 \end{align}
 Since
 \begin{align}\label{varsigma.dec}
\varsigma^{\pm}=\mathring{\rm c}_1 W+\mathring{\rm c}_1  \mathrm{D}W,
 \end{align}
we have
\begin{align*}
&\|\mathrm{D}^{\alpha}\varsigma^{\pm}\|_{L^{2}(\varOmega_t)}\leq
 \|\mathrm{D}^{\alpha}(\mathring{\rm c}_2 \mathrm{D}W + \mathring{\rm c}_2 W)\|_{L^{2}(\varOmega_t)},\\
&\|[\mathrm{D}^{\alpha},\, \mathring{w}_{\ell}^{\pm} ]\p_{\ell} \varsigma^{\pm} \|_{L^{2}(\varOmega_t)}
 \lesssim
 \big\|\!\left([\mathrm{D}^{\alpha},\, \mathring{\rm c}_2 ] W,\,
 [\mathrm{D}^{\alpha},\, \mathring{\rm c}_2 ] \mathrm{D}W,\,
 [\mathrm{D}^{\alpha},\, \mathring{\rm c}_1 ] \mathrm{D}^2 W \right)\!\big\|_{L^{2}(\varOmega_t)}.
\end{align*}
Estimate \eqref{varsigma.est'} follows by plugging these last inequalities into \eqref{varsigma.est1}
with $\alpha=0$.
Apply the Moser-type calculus inequality \eqref{Moser3} and use $|\alpha|\leq s-1$ to derive
 \begin{align}
 \notag &\| (\mathrm{D}^{\alpha} (\mathring{\rm c}_1 \mathrm{D}\tilde{f} ),\mathrm{D}^{\alpha} (\mathring{\rm c}_1  \tilde{f} ) ) \|_{L^{2}(\varOmega_t)}^2\\
 \notag &\quad \lesssim \| (\mathring{\rm c}_1 \mathrm{D}^{\alpha} \mathrm{D}\tilde{f},\mathring{\rm c}_1 \mathrm{D}^{\alpha}  \tilde{f}   )  \|_{L^{2}(\varOmega_t)}^2
 +\| ([\mathrm{D}^{\alpha} ,\mathring{\rm c}_1 ]\mathrm{D}\tilde{f},[\mathrm{D}^{\alpha} ,\mathring{\rm c}_1 ] \tilde{f}   )  \|_{L^{2}(\varOmega_t)}^2\\
 \notag
 &\quad \lesssim \|\tilde{f} \|_{H^s(\varOmega_t) }^2+ \mathring{\rm C}_{s+1}\|\tilde{f} \|_{L^{\infty} (\varOmega_t)}^2.
 \end{align}
 By virtue of \eqref{Moser3}--\eqref{Moser4}, we obtain
 \begin{align}
 &\notag
 \|([\mathrm{D}^{\alpha},\, \mathring{\rm c}_2 ] W,\,
 [\mathrm{D}^{\alpha},\, \mathring{\rm c}_2 ] \mathrm{D}W  )\|_{L^{2}(\varOmega_t)}^2
 +\| [\mathrm{D}^{\alpha} ,\,\mathring{\rm c}_1 ] \mathrm{D}^2 W   \|_{L^{2}(\varOmega_t)}^2\\
&\notag
 \qquad \lesssim \|W\|_{H^s(\varOmega_t) }^2+ \mathring{\rm C}_{s+2}\|W\|_{L^{\infty} (\varOmega_t)}^2.
 \end{align}
Inserting the  estimates above into \eqref{varsigma.est1} yields \eqref{varsigma.est}.
 This completes the proof.
\qed\end{proof}

Thanks to \eqref{varsigma.est}, we can obtain the estimate of the characteristic variables $W_{1}^{\pm}=p^{\pm}$.
More precisely, according to \eqref{W.def.d} and \eqref{W.nc}, we have
\begin{align*}
\mathring{N}^{\pm}\cdot {F}_{1}^{\pm}
=\frac{| \mathring{N}^{\pm}|^2}{\mathring{\rho}^{\pm} \mathring{{F}}_{1N}^{\pm}  }
(W_1^{\pm}-W_{d+2}^{\pm})-\sum_{j=2}^d \p_j\mathring{\varPhi}^{\pm} W_{d+j+1}^{\pm}
=\frac{| \mathring{N}^{\pm}|^2}{\mathring{\rho}^{\pm} \mathring{{F}}_{1N}^{\pm}  }
W_1^{\pm}+\mathring{\rm c}_1   W_{\rm nc}.
\end{align*}
Combining the last identity with \eqref{varsigma} and recalling \eqref{differential}, we calculate
 \begin{align}
\notag \p_1 \mathring{\varPhi}^{\pm}\varsigma^{\pm}
=\;&
 \begin{matrix}
\p_1\left( \mathring{c}_{\pm}^{-2}\mathring{{F}}_{1N}^{\pm}p^{\pm}
+\mathring{\rho}^{\pm}   \mathring{N}^{\pm}\cdot {F}_{1}^{\pm}\right)
+\sum\limits_{i=2}^d\p_i\left(\p_1\mathring{\varPhi}^{\pm}
\big( \mathring{c}_{\pm}^{-2}\mathring{{F}}_{i1}^{\pm}p^{\pm}
+\mathring{\rho}^{\pm}   {F}_{i1}^{\pm}\big)\right)
\end{matrix}  \\[1mm]
\notag
=\;&
 \frac{  \mathring{c}_{\pm}^{-2} |\mathring{{F}}_{1N}^{\pm}|^2
 +| \mathring{N}^{\pm}|^2}{\mathring{{F}}_{1N}^{\pm}} \p_1 W_1^{\pm}
+\mathring{\rm c}_1 \p_1 W_{\rm nc}+\mathring{\rm c}_1 \mathrm{D}_{\rm tan} W+\mathring{\rm c}_2 W,
\end{align}
which implies
\begin{align} \label{varsigma.id2}
\p_1 W_1^{\pm} = \mathring{\rm c}_1 \varsigma^{\pm}
 +\mathring{\rm c}_1 \p_1 W_{\rm nc}+\mathring{\rm c}_1 \mathrm{D}_{\rm tan} W+\mathring{\rm c}_2 W.
\end{align}
In light of \eqref{varsigma.id2},
we utilize  \eqref{MTT.inequ}, \eqref{Wnc.est}, \eqref{varsigma.est},  \eqref{varsigma.dec},
and the Moser-type calculus inequalities \eqref{Moser3}--\eqref{Moser4} to obtain
 \begin{align}
 \notag &
 \sum_{|\beta|\leq s-1 }\| \p_1 \mathrm{D}_{\rm tan}^{\beta} W_{1}(t) \|_{L^2(\varOmega)}^2 \\
\notag &\quad \lesssim \sum_{|\beta|\leq s-1 }
\|(\mathrm{D}_{\rm tan}^{\beta}\varsigma, \mathrm{D}_{\rm tan}^{\beta}\p_1 W_{\rm nc},
\mathrm{D}_{\rm tan}^{\beta}\mathrm{D}_{\rm tan} W)\|_{L^2(\varOmega)}^2 \\
\notag & \quad\quad\,\,
+ \sum_{|\beta|\leq s-1 }
\big\|\big( [\mathrm{D}_{\rm tan}^{\beta} ,\mathring{\rm c}_1 ]W, [\mathrm{D}_{\rm tan}^{\beta} ,\mathring{\rm c}_1] \mathrm{D}W,
\mathrm{D}_{\rm tan}^{\beta}  (\mathring{\rm c}_2 W)\big)\big\|_{H^1(\varOmega_t)}^2 \\
&\quad\lesssim \VERT W(t)\VERT_{{\rm tan},s}^2
+\|  ( \tilde{f},W)\|_{H^s(\varOmega_t)}^2
 + \mathring{\rm C}_{s+2} \|    ( \tilde{f},W)\|_{L^{\infty}(\varOmega_t)}^2 .
 \label{W1.est}
 \end{align}
Furthermore, we plug \eqref{Wnc.est'} and \eqref{varsigma.est'}
into \eqref{varsigma.id2} to obtain
 \begin{align}
  \| \p_1   W_{1}(t) \|_{L^2(\varOmega)}^2
 \lesssim
\VERT W(t)\VERT_{{\rm tan},1}^2      +  \|  ( \tilde{f},W)\|_{H^1(\varOmega_t)}^2 .
\label{W1.est'}
\end{align}

\subsection{Estimate of the Remaining Characteristic Variables}\label{sec.normal4}

To recover the normal derivatives of the characteristic variables
$W_{jd+i+1}^{\pm}=F_{ij}^{\pm}$ for $i=1,\ldots,d$ and $j=2,\ldots,d$,
motivated by constraints \eqref{inv1},
we introduce quantities $\eta^{\pm}:=(\eta_1^{\pm},\ldots,\eta_d^{\pm})$ with
\begin{align} \label{eta}
&\eta_i^{\pm}:=\mathring{{F}}_{k 1}^{\pm}\p_{k}^{\mathring{\varPhi}^{\pm}} {F}_{i2}^{\pm}
-\mathring{{F}}_{k 2}^{\pm}\p_{k}^{\mathring{\varPhi}^{\pm}} {F}_{i1}^{\pm}.
\end{align}
In addition, for $d=3$, we introduce quantities
$\zeta^{\pm}:=(\zeta_1^{\pm},\zeta_2^{\pm},\zeta_3^{\pm})$ with
\begin{align}
\label{zeta}
&\zeta_i^{\pm}:=\mathring{{F}}_{k 1}^{\pm}\p_{k}^{\mathring{\varPhi}^{\pm}} {F}_{i3}^{\pm}
-\mathring{{F}}_{k 3}^{\pm}\p_{k}^{\mathring{\varPhi}^{\pm}} {F}_{i1}^{\pm}.
\end{align}

We have the following estimates for $\eta^{\pm}$ and $\zeta^{\pm}$.

\begin{lemma}
 If the assumptions in Theorem {\rm \ref{thm2}} are satisfied, then
 \begin{align} \label{eta.est}
& \VERT( \eta^{\pm}, \zeta^{\pm})\VERT_{s-1}^2
  \lesssim
 \|(\tilde{f} ,W)\|_{H^{s}(\varOmega_t) }^2+ \mathring{\rm C}_{s+2}\|(\tilde{f} ,W)\|_{L^{\infty} (\varOmega_t)}^2,
\\ \label{eta.est'}
& \|( \eta^{\pm}, \zeta^{\pm})\|_{L^2(\varOmega)}
 \lesssim\|(\tilde{f} ,W)\|_{H^{1}(\varOmega_t) }.
 \end{align}
\end{lemma}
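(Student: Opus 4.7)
The plan is to imitate the proof of Lemma \ref{lem.varsigma} for the divergence quantities $\varsigma^{\pm}$. I will first derive transport-type equations for $\eta_i^{\pm}$ and $\zeta_i^{\pm}$ whose right-hand sides have the schematic form $\mathring{\rm c}_1 \mathrm{D}\tilde{f} + \mathring{\rm c}_1 \tilde{f} + \mathring{\rm c}_2 \mathrm{D}W + \mathring{\rm c}_2 W$, and then close the $L^2$ energy estimate along the same lines as \eqref{varsigma.est1}.

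To produce the transport equations, I would apply $\mathring{F}_{k1}^{\pm}\p_k^{\mathring{\varPhi}^{\pm}}$ to the $(i,2)$-equation in \eqref{F.equ} and subtract the result of applying $\mathring{F}_{k2}^{\pm}\p_k^{\mathring{\varPhi}^{\pm}}$ to the $(i,1)$-equation (analogously, with index pair $(1,3)$ in place of $(1,2)$ for $\zeta_i^{\pm}$). Commuting $\mathring{F}_{kj}^{\pm}\p_k^{\mathring{\varPhi}^{\pm}}$ past the convective operator $\p_t+\mathring{w}_{\ell}^{\pm}\p_{\ell}$ produces only remainders of the required schematic type. The main obstacle will be to show that the combination of second-order derivatives of $v_i^{\pm}$,
\[
\mathring{F}_{k1}^{\pm}\mathring{F}_{\ell 2}^{\pm}\p_k^{\mathring{\varPhi}^{\pm}}\p_{\ell}^{\mathring{\varPhi}^{\pm}}v_i^{\pm}
 -\mathring{F}_{k2}^{\pm}\mathring{F}_{\ell 1}^{\pm}\p_k^{\mathring{\varPhi}^{\pm}}\p_{\ell}^{\mathring{\varPhi}^{\pm}}v_i^{\pm},
\]
collapses to a first-order quantity. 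Swapping dummy indices in the second term and using that the commutator $[\p_k^{\mathring{\varPhi}^{\pm}},\p_{\ell}^{\mathring{\varPhi}^{\pm}}]$ is a first-order operator with coefficients in $\mathring{\rm c}_2$, this expression should reduce to an expression of type $\mathring{\rm c}_2\mathrm{D}W$. This cancellation is the linearized counterpart of the involution preservation stated in Proposition~\ref{pro1} and parallels the mechanism behind \eqref{varsigma.eq}.

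With the transport equations in hand, I would apply $\mathrm{D}^{\alpha}$ with $|\alpha|\leq s-1$, multiply by $\mathrm{D}^{\alpha}\eta_i^{\pm}$ (respectively $\mathrm{D}^{\alpha}\zeta_i^{\pm}$), and integrate over $\varOmega_t$; the boundary contribution in the integration by parts vanishes thanks to $\mathring{w}_1^{\pm}|_{x_1=0}=0$ from \eqref{w1.ring.bdy}. The commutator terms $[\mathrm{D}^{\alpha},\mathring{w}_{\ell}^{\pm}]\p_{\ell}(\eta^{\pm},\zeta^{\pm})$ are handled by observing that $(\eta^{\pm},\zeta^{\pm})=\mathring{\rm c}_1 W+\mathring{\rm c}_1\mathrm{D}W$, the direct analogue of \eqref{varsigma.dec}, and then invoking the Moser-type inequalities \eqref{Moser3}--\eqref{Moser4} exactly as in the treatment of the commutator and source terms of \eqref{varsigma.est1}. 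Taking $\alpha=0$ immediately yields \eqref{eta.est'} with no commutator contributions, while summing over $|\alpha|\leq s-1$ and applying Gronwall gives \eqref{eta.est}. Apart from the algebraic cancellation just described, every ingredient is a faithful transcription of the proof of Lemma~\ref{lem.varsigma}, so I expect no further obstacles.
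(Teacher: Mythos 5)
Your proposal is correct and follows essentially the same route as the paper: derive transport equations for $\eta^{\pm}$ and $\zeta^{\pm}$ from \eqref{F.equ}, with the second-order terms in $v^{\pm}$ collapsing via the dummy-index swap and the commutator $\mathring{F}_{\ell i}^{\pm}\mathring{F}_{kj}^{\pm}[\p_{\ell}^{\mathring{\varPhi}^{\pm}},\p_{k}^{\mathring{\varPhi}^{\pm}}]=\mathring{\rm c}_2\mathrm{D}$, and then repeat the energy argument of Lemma~\ref{lem.varsigma} using the decomposition $(\eta^{\pm},\zeta^{\pm})=\mathring{\rm c}_1\mathrm{D}V$ together with \eqref{w1.ring.bdy} and the Moser-type inequalities \eqref{Moser3}--\eqref{Moser4}. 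The only cosmetic difference is that the paper absorbs the $\|\mathrm{D}^{\alpha}(\eta,\zeta)\|_{L^2(\varOmega_t)}$ term directly through this decomposition rather than via Gr\"onwall, which is an equally valid closing step.
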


\begin{proof}
Thanks to \eqref{F.equ}, we deduce the equations for $\eta^{\pm}$ and $\zeta^{\pm}$:
 \begin{align}
 \label{eta.eq}
 &(\p_t+\mathring{w}_{\ell}^{\pm}\p_{\ell})\eta ^{\pm}
 =\mathring{\rm c}_1 \mathrm{D}\tilde{f}+ \mathring{\rm c}_1 \tilde{f}+\mathring{\rm c}_2 \mathrm{D}W + \mathring{\rm c}_2 W,\\[1mm]
 & \label{zeta.eq}
 (\p_t+\mathring{w}_{\ell}^{\pm}\p_{\ell})\zeta ^{\pm}
 =\mathring{\rm c}_1 \mathrm{D}\tilde{f}+ \mathring{\rm c}_1 \tilde{f}+\mathring{\rm c}_2 \mathrm{D}W + \mathring{\rm c}_2 W,
 \end{align}
 where we have used
 \begin{align*}
 \mathring{{F}}_{k i}^{\pm} \mathring{{F}}_{\ell j}^{\pm}\p_{k}^{\mathring{\varPhi}^{\pm}} \p_{\ell}^{\mathring{\varPhi}^{\pm}}
 -\mathring{{F}}_{k j}^{\pm}\mathring{{F}}_{\ell i}^{\pm}\p_{k}^{\mathring{\varPhi}^{\pm}} \p_{\ell}^{\mathring{\varPhi}^{\pm}}
 =\mathring{{F}}_{\ell i}^{\pm} \mathring{{F}}_{k j}^{\pm}
 	\big[\p_{\ell}^{\mathring{\varPhi}^{\pm}}, \p_{k}^{\mathring{\varPhi}^{\pm}}  \big]
 =\mathring{\rm c}_2 \mathrm{D}.
 \end{align*}
 Noting that $\eta^{\pm}=\mathring{\rm c}_1 \mathrm{D}V$ and $\zeta^{\pm}=\mathring{\rm c}_1 \mathrm{D}V$,
 we perform the same analysis as $\varsigma^{\pm}$ in Lemma \ref{lem.varsigma} to deduce \eqref{eta.est}--\eqref{eta.est'}.
This completes the proof.
\qed\end{proof}

According to \eqref{differential},  we compute
\begin{align}
\label{eta.id0}
\eta_i^{\pm}&=
\frac{1}{\p_1 \mathring{\varPhi}^{\pm}}
\big( \mathring{ {F}}_{1N}^{\pm}\p_1 F_{i2}^{\pm}  -\mathring{ {F}}_{2N}^{\pm}\p_1 F_{i1}^{\pm}   \big)
+\sum_{\ell=2}^d
\big(  \mathring{ {F}}_{\ell 1}^{\pm}\p_{\ell} F_{i2}^{\pm}  -\mathring{ {F}}_{\ell 2}^{\pm}\p_{\ell} F_{i1}^{\pm}   \big),
\\
\label{zeta.id0}
\zeta_i^{\pm}&=
\frac{1}{\p_1 \mathring{\varPhi}^{\pm}}
\big( \mathring{ {F}}_{1N}^{\pm}\p_1 F_{i3}^{\pm}  -\mathring{ {F}}_{3N}^{\pm}\p_1 F_{i1}^{\pm}   \big)
+\sum_{\ell=2}^d
\big(  \mathring{ {F}}_{\ell 1}^{\pm}\p_{\ell} F_{i3}^{\pm}  -\mathring{ {F}}_{\ell 3}^{\pm}\p_{\ell} F_{i1}^{\pm}   \big),
\end{align}
which, combined with \eqref{W.def.d} and \eqref{W.nc}, imply
\begin{align}
\label{eta.id1} \p_1 F_{i2}^{\pm}
=\mathring{\rm c}_1 \eta_i^{\pm}
+\mathring{\rm c}_1 \p_1 W_{\rm nc}+\mathring{\rm c}_1 \p_1 W_{1}
+\mathring{\rm c}_1 \mathrm{D}_{\rm tan} W+\mathring{\rm c}_2 W,\\[1mm]
\label{zeta.id1}  \p_1 F_{i3}^{\pm}
=\mathring{\rm c}_1 \zeta_i^{\pm}
+\mathring{\rm c}_1 \p_1 W_{\rm nc}+\mathring{\rm c}_1 \p_1 W_{1}
+\mathring{\rm c}_1 \mathrm{D}_{\rm tan} W+\mathring{\rm c}_2 W.
\end{align}
In view of \eqref{eta.id1}--\eqref{zeta.id1},
we utilize \eqref{Wnc.est}--\eqref{Wnc.est'}, \eqref{W1.est}--\eqref{W1.est'}, \eqref{eta.est}--\eqref{eta.est'},
the Moser-type calculus inequalities  \eqref{Moser3}--\eqref{Moser4},  and \eqref{MTT.inequ}
 to obtain
\begin{align}\label{Wc.est'}
\| \p_1  W_{jd+i+1}(t) \|_{L^2(\varOmega)}^2
 \lesssim  \VERT  W(t)\VERT_{{\rm tan},1}^2+\|  ( \tilde{f},W)\|_{H^1(\varOmega_t)}^2,
\end{align}
and
\begin{align}
&\sum_{|\beta|\leq s-1 }\| \p_1 \mathrm{D}_{\rm tan}^{\beta} W_{jd+i+1}(t) \|_{L^2(\varOmega)}^2\notag\\
 & \quad \lesssim\VERT  W(t)\VERT_{{\rm tan},s}^2   +   \|  ( \tilde{f},W)\|_{H^s(\varOmega_t)}^2
+ \mathring{\rm C}_{s+2} \|    ( \tilde{f},W)\|_{L^{\infty}(\varOmega_t)}^2 ,
\label{Wc.est}
\end{align}
for $i=1,\ldots,d$, and $j=2,\ldots,d$.

\subsection{Proof of Proposition {\rm \ref{lem.normal}}}\label{sec.normal5}

Estimate \eqref{normal.est'} follows by applying \eqref{MTT.inequ} and combining estimates
\eqref{Wnc.est'}, \eqref{S.est'}, \eqref{W1.est'}, and \eqref{Wc.est'}.
Thanks to \eqref{Wnc.id}, \eqref{varsigma.id2}, and \eqref{eta.id1}--\eqref{zeta.id1},
we can combine estimates \eqref{sigma.est}, \eqref{S.est}, \eqref{varsigma.est}, \eqref{eta.est},
and \eqref{Wc.est} to prove  by
induction in $\ell=1,\ldots,s$ that
\begin{align}
&\notag \sum_{k=1}^{\ell}\sum_{|\beta |\leq s-k}
\|\p_1^k\mathrm{D}_{\rm tan}^{\beta} W (t)\|_{L^2(\varOmega)}^2 \\
&\quad \lesssim\VERT  W(t)\VERT_{{\rm tan},s}^2
+ \|  ( \tilde{f},W)\|_{H^s(\varOmega_t)}^2
+ \mathring{\rm C}_{s+2} \|    ( \tilde{f},W)\|_{L^{\infty}(\varOmega_t)}^2 .
\label{normal.est2}
\end{align}
Estimate \eqref{normal.est} follows from \eqref{normal.est2} with $\ell=s$.
Then the proof of Proposition \ref{lem.normal} is complete.

\section{Estimate of the Tangential Derivatives}\label{sec.Tangential}

In this section, we establish the estimate for the tangential derivatives of solutions
of the linearized problem \eqref{ELP2}.

\begin{proposition} \label{lem.tan}
If the assumptions in Theorem {\rm \ref{thm2}} are satisfied, then
\begin{align}
\label{tan.est}
\VERT W(t)\VERT_{{\rm tan}, s}^2
\lesssim   \mathcal{M}_s(t)
+ \|\underline{\mathring{\rm c}}_1\|_{H^{3}(\varOmega_T)}   \VERT W(t)\VERT_{s}^2
\end{align}
for any  constant $\epsilon>0$, where $\varPsi$ is given in \eqref{varPsi.def} and
\begin{align} \label{M.cal}
\mathcal{M}_s(t):=
\left\{
\begin{aligned}
&\| (W,\varPsi,\tilde{f})\|_{H^{1}({\varOmega_t})}^2 &\ \textrm{if } s=1,\\
&\| (W,\varPsi,\tilde{f})\|_{H^{s}({\varOmega_t})}^2 +
\mathring{\rm C}_{s+2}\|(W,\,\varPsi,\,\tilde{f}) \|^2_{H^{3}(\varOmega_t)}&\ \textrm{if } s\geq 2.
\end{aligned}
\right.
\end{align}
\end{proposition}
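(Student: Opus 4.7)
The plan is to carry out a tangential $L^2$ energy estimate on the symmetric hyperbolic system \eqref{ELP3.a}. For each multi-index $\beta$ with $|\beta|\leq s$, I would apply $\mathrm{D}_{\rm tan}^{\beta}$ to \eqref{ELP3.a}, pair the result with $\mathrm{D}_{\rm tan}^{\beta}W^{\pm}$ in $L^2(\varOmega_t)$, and integrate by parts. The symmetry of the $\mathring{\mathcal{A}}_{j}^{\pm}$ and positive definiteness of $\mathring{\mathcal{A}}_{0}^{\pm}$ produce an instant tangential energy of the form $\|\mathrm{D}_{\rm tan}^{\beta}W(t)\|_{L^2(\varOmega)}^2$ together with a boundary quadratic form $Q$ arising from $-\mathring{\mathcal{A}}_{1a}^{\pm}|_{x_1=0}$ (note $\mathring{\mathcal{A}}_{1b}^{\pm}|_{x_1=0}=0$ from \eqref{A1.cal.decom}). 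The commutator remainders $[\mathrm{D}_{\rm tan}^{\beta},\mathring{\mathcal{A}}_{i}^{\pm}]\partial_i W^{\pm}$ and zero order contributions $\mathrm{D}_{\rm tan}^{\beta}(\mathring{\mathcal{A}}_{4}^{\pm}W^{\pm})$ are absorbed into $\mathcal{M}_s(t)$ via the Moser inequalities \eqref{Moser3}--\eqref{Moser4}, using the interpolation $\mathring{\rm C}_{s+2}\|(W,\varPsi,\tilde f)\|_{H^3}^2$ to handle the tame loss; the source term $\mathrm{D}_{\rm tan}^{\beta}(\mathring{J}^{\mathsf{T}}\tilde f)$ feeds directly into $\mathcal{M}_s(t)$.

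The crux is controlling $Q$ uniformly in the strength $[\widebar F_{11}]$ by the interior energy alone. From \eqref{A.1a}--\eqref{A.bm}, $Q$ couples traces of noncharacteristic variables $W_2^{\pm},W_{d+j+1}^{\pm}$ on $\partial\varOmega$. Inserting the boundary conditions \eqref{ELP3.b.1}--\eqref{ELP3.b.3} for velocity and for the jump $[W_{d+2}]$, and combining with the restriction \eqref{bas.c6} of the interior equations for $F_{j}^{\pm}$ ($j\geq 2$) on the boundary, an intrinsic cancellation appears between leading pieces of the two boundary contributions. This reduces $Q$ to the sum $\mathcal{R}_2+\mathcal{R}_3$, where $\mathcal{R}_2$ is a lower-order error and $\mathcal{R}_3$ is an instantaneous boundary integral at time $t$. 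Crucially, I would avoid directly differentiating \eqref{ELP3.b.4} with respect to tangential variables, since the coefficient $[\mathring F_{11}]$ would otherwise spoil the strength-uniform control. Instead, following Trakhinin's interpolation idea, I would bound the tangential derivatives of $\psi$ via interpolation between the trace estimate from \eqref{ELP3.b.1} (which gives $\partial_0\psi$ in terms of $W_2^+$) and the interior energy, yielding identities and estimates for $\mathrm{D}^\beta\psi$ independent of $[\mathring F_{11}]$.

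The hardest step, which I expect to be the main obstacle, is the strength-uniform treatment of $\mathcal{R}_3$. I would split according to whether $\mathrm{D}_{\rm tan}^{\beta}$ contains a $\partial_t$. In the time-derivative case, use \eqref{bas.c6} once more to convert the offending time derivative of $F_{ij}^{\pm}$ ($j\geq 2$) on the boundary into purely tangential space derivatives of the velocity; then the trace Lemma \ref{lem.trace2} rewrites the resulting instantaneous boundary integral as a volume pairing bounded by $\VERT W(t)\VERT_{{\rm tan},s}$ multiplied by coefficients that can be absorbed into the left side via Young's inequality, while the noncharacteristic normal derivative estimates of \S\ref{sec.normal1} allow the reduction to one fewer time derivative and one extra tangential space derivative. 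Iterating this reduction brings us to the pure-space case, where I apply Lemma \ref{lem.trace2} after using \eqref{eta}--\eqref{zeta} to express the boundary normal derivatives of the characteristic variables $F_{i2}^{\pm},F_{i3}^{\pm}$ in terms of $\eta^{\pm},\zeta^{\pm}$, whose bounds \eqref{eta.est}--\eqref{eta.est'} are already in hand. The resulting quadratic boundary form becomes coercive (or at worst absorbable) precisely when the background stability condition \eqref{thm.H1} holds — the scalar inequality in \eqref{thm.H1} is exactly the positivity threshold for this quadratic form in the two- and three-dimensional cases respectively — which closes the finite induction and yields \eqref{tan.est} modulo the remainder $\|\underline{\mathring{\rm c}}_1\|_{H^3(\varOmega_T)}\VERT W(t)\VERT_s^2$.
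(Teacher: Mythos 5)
Your proposal follows essentially the same route as the paper's proof: the tangential energy identity with boundary quadratic form $Q$, the cancellation obtained by combining the boundary conditions with the interior equations restricted to $\p\varOmega$ reducing $\int Q$ to $\mathcal{R}_2+\mathcal{R}_3$, the Trakhinin-style quantities with interpolation to express $\mathrm{D}_{\rm tan}\psi$ without the degenerate coefficient $[\mathring{F}_{11}]^{-1}$, the split of $\mathcal{R}_3$ into time-derivative and space-derivative cases handled via Lemma~\ref{lem.trace2} and the quantities $\eta^{\pm},\zeta^{\pm}$, and absorption under the stability condition \eqref{thm.H1}. One minor correction: the conversion of $\p_t F_{ij}^{+}$ on the boundary into tangential velocity derivatives comes from the restriction of the linearized equations \eqref{F.equ} on $\p\varOmega$ (Lemma~\ref{lem.key2}, identity \eqref{key2b}), not from \eqref{bas.c6}, which enters only through the basic state in the proof of Lemma~\ref{lem.key}.
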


The rest of this section is concerned with the proof of Proposition \ref{lem.tan}.

\subsection{Prelude}\label{sec.tan1}
Applying operator $\mathrm{D}_{\rm tan}^{\beta}:=\p_t^{\beta_0}\p_2^{\beta_2}\cdots\p_d^{\beta_d}$
with $|\beta|\leq s$  to system \eqref{ELP3.a}, we obtain
\begin{align} \label{tan.id1}
\mathring{\mathcal{A}}_0^{\pm}\p_t\mathrm{D}_{\rm tan}^{\beta}W^{\pm}+\mathring{\mathcal{A}}_j^{\pm}\p_j \mathrm{D}_{\rm tan}^{\beta}W^{\pm}=R^{\pm},
\end{align}
where
\begin{align} \notag
R^{\pm}:=\mathrm{D}_{\rm tan}^{\beta}(\mathring{J}_{\pm}^{\mathsf{T}}\tilde{f}^{\pm})
-\mathrm{D}_{\rm tan}^{\beta}(\mathring{\mathcal{A}}_4^{\pm}W^{\pm})
-[\mathrm{D}_{\rm tan}^{\beta}, \mathring{\mathcal{A}}_0^{\pm}]\p_t W^{\pm}
-[\mathrm{D}_{\rm tan}^{\beta},  \mathring{\mathcal{A}}_j^{\pm}]\p_j W^{\pm}.
\end{align}
Take the scalar product of \eqref{tan.id1} with $\mathrm{D}_{\rm tan}^{\beta} W^{\pm}$ to obtain
\begin{align}
\sum_{\pm}\int_{\varOmega}\mathring{\mathcal{A}}_0^{\pm}\mathrm{D}_{\rm tan}^{\beta}W^{\pm}\cdot \mathrm{D}_{\rm tan}^{\beta}W^{\pm}
=\mathcal{R}_1+\int_{\omega_t} Q,
\label{tan.id2}
\end{align}
where
\begin{align}
\notag \mathcal{R}_1
:=&\sum_{\pm}\int_{\varOmega_t}\mathrm{D}_{\rm tan}^{\beta}
 W^{\pm}\cdot \Big(2R^{\pm}+\big(\p_t \mathring{\mathcal{A}}_0^{\pm}+\p_j\mathring{\mathcal{A}}_j^{\pm} \big)\mathrm{D}_{\rm tan}^{\beta} W^{\pm}\Big),\\
\label{Q.def}
Q:=&\sum_{\pm}\mathring{\mathcal{A}}_{1a}^{\pm}\mathrm{D}_{\rm tan}^{\beta} W^{\pm}\cdot \mathrm{D}_{\rm tan}^{\beta} W^{\pm}
=\underbrace{2[\mathrm{D}_{\rm tan}^{\beta} W_2 \mathrm{D}_{\rm tan}^{\beta} W_{d+2}]}_{Q_1}+Q_2,
\end{align}
with
\begin{align}
\label{Q2.def}
Q_2:=
\left\{
\begin{aligned}
&-2\mathring{\rho}^+\mathring{{F}}_{1N}^+\big[\mathrm{D}_{\rm tan}^{\beta}W_3 \mathrm{D}_{\rm tan}^{\beta} W_5\big]
\quad &&\textrm{if }d=2,\\[1.5mm]
& -2\mathring{\rho}^+\mathring{{F}}_{1N}^+
\big[\mathrm{D}_{\rm tan}^{\beta}W_3 \mathrm{D}_{\rm tan}^{\beta} W_6
+ \mathrm{D}_{\rm tan}^{\beta} W_4 \mathrm{D}_{\rm tan}^{\beta} W_7\big]\quad &&\textrm{if }d=3.
\end{aligned}
\right.
\end{align}
Here and hereafter, for simplicity, we omit the differential symbol of the variables of integration 
when no confusion arises.

A standard computation with an application of the Moser-type calculus inequalities \eqref{Moser3}--\eqref{Moser4}
and the Sobolev embedding
$H^3(\varOmega_t)\hookrightarrow L^{\infty}(\varOmega_t)$ yields
\begin{align}
\mathcal{R}_1\lesssim   \mathcal{M}_s(t).
\label{R1.est}
\end{align}

We introduce the instant tangential energy $\mathcal{E}_{\rm tan}^{\beta}(t)$ as
\begin{align}
\notag \mathcal{E}_{\rm tan}^{\beta}(t):=&
\sum_{\pm}\int_{\varOmega}{\mathcal{A}}_0(\widebar{U}^{\pm},\widebar{\varPhi}^{\pm}) \mathrm{D}_{\rm tan}^{\beta}W^{\pm}\cdot \mathrm{D}_{\rm tan}^{\beta}W^{\pm},
\end{align}
where  $\mathcal{A}_0$ is given in \eqref{A.cal}. Thanks to \eqref{A0.cal}, we have
\begin{align}
\notag \mathcal{E}_{\rm tan}^{\beta}(t)  =&
\sum_{\pm} \Bigg\{\frac{1}{\bar{\rho}^{\pm}\bar{c}_{\pm}^2 }\|\mathrm{D}_{\rm tan}^{\beta}W_1^{\pm} \|_{L^2(\varOmega)}^2
+\frac{1}{\bar{\rho}^{\pm}(\widebar{F}_{11}^{\pm})^2 } \|\mathrm{D}_{\rm tan}^{\beta}(W_1^{\pm}-W_{d+2}^{\pm}) \|_{L^2(\varOmega)}^2\\
&\ \ \qquad  +\sum_{j=2,\,j\neq d+2}^{d^2+d+1}\bar{\rho}^{\pm} \|\mathrm{D}_{\rm tan}^{\beta}W_j^{\pm} \|_{L^2(\varOmega)}^2
+\|\mathrm{D}_{\rm tan}^{\beta}S^{\pm} \|_{L^2(\varOmega)}^2\Bigg\},
\label{E.tan.def}
\end{align}
where $\bar{\rho}^{\pm}:=(\det \widebar{\bm{F}}^{\pm})^{-1}$
and $\bar{c}_{\pm}:= p_{\rho}(\bar{\rho}^{\pm},\widebar{ S }^{\pm})^{1/2}$.

Since $\mathring{\mathcal{A}}_0^{\pm}-\mathcal{A}_0(\widebar{U}^{\pm},\widebar{\varPhi}^{\pm})
$ are smooth functions of $\{(\mathrm{D}^{\alpha}\mathring{V},\, \mathrm{D}^{\alpha}\mathring{\varPsi}): |\alpha|\leq 1 \}$ and vanish at the origin,
we plug \eqref{R1.est} into \eqref{tan.id2} to infer
\begin{align}
\mathcal{E}_{\rm tan}^{\beta}(t)\leq
\,&C \mathcal{M}_s(t)
+C \|\underline{\mathring{\rm c}}_1\|_{L^{\infty}(\varOmega_T)}\VERT W(t)\VERT_{s}^2
+\int_{\omega_t} Q,
\label{tan.est2}
\end{align}
where $\mathcal{M}_s(t)$ and $Q$  are defined by \eqref{M.cal} and \eqref{Q.def}, respectively.

\subsection{Cancellation}\label{sec.tan2}
We are going to show a cancellation for the last term in \eqref{tan.est2}.
By virtue of the boundary conditions \eqref{ELP3.b.2}--\eqref{ELP3.b.3}, we find
\begin{align}
\notag Q_1=\,&2 \mathrm{D}_{\rm tan}^{\beta} [W_{d+2}]\mathrm{D}_{\rm tan}^{\beta}  W_2^++2\mathrm{D}_{\rm tan}^{\beta} [W_2]\mathrm{D}_{\rm tan}^{\beta} W_{d+2}^-\\
\label{Q1} =\,&Q_{1a}+[\mathrm{D}_{\rm tan}^{\beta} ,\,\mathring{\rm c}_0]W \mathrm{D}_{\rm tan}^{\beta}  W_2^++\mathrm{D}_{\rm tan}^{\beta}(\mathring{\rm c}_1\psi )\mathrm{D}_{\rm tan}^{\beta} W
\qquad \textrm{on $\p\varOmega$}
\end{align}
with
\begin{align} \notag
Q_{1a}:=2[\mathring{F}_{11}] \p_{F_{ij}} \varrho(\mathring{\bm{F}}^+)\mathrm{D}_{\rm tan}^{\beta} F_{ij}^+\mathrm{D}_{\rm tan}^{\beta} W_2^+.
\end{align}
Similarly, it follows from \eqref{key1.bas}, \eqref{ELP3.b.2}, and \eqref{ELP3.b.4} that
\begin{align}
\notag Q_2=\,&-2\varrho(\mathring{\bm{F}}^+)\sum_{j=2}^{d} \left(\mathrm{D}_{\rm tan}^{\beta}[W_{d+j+1}]\mathrm{D}_{\rm tan}^{\beta}  W_{j+1}^+
  +\mathrm{D}_{\rm tan}^{\beta} [W_{j+1}]\mathrm{D}_{\rm tan}^{\beta} W_{d+j+1}^-\right)\\
 =\,&Q_{2a}+\mathring{\rm c}_0 [\mathrm{D}_{\rm tan}^{\beta} ,\,\mathring{\rm c}_0] \mathrm{D}_{\rm tan} \psi \mathrm{D}_{\rm tan}^{\beta}  W
+\mathring{\rm c}_0   \mathrm{D}_{\rm tan}^{\beta}(\mathring{\rm c}_1\psi )\mathrm{D}_{\rm tan}^{\beta} W
\ \  \textrm{on $\p\varOmega$} \label{Q2}
\end{align}
with
\begin{align}\notag
Q_{2a}:=
2 \varrho(\mathring{\bm{F}}^+) [\mathring{F}_{11}] \sum_{j=2}^d \mathrm{D}_{\rm tan}^{\beta}  \p_j \psi \mathrm{D}_{\rm tan}^{\beta}  W_{j+1}^+ .
\end{align}
We decompose $Q_{2a}$ further as
\begin{align}\label{Q2a}
Q_{2a}
=\,&Q_{2b}+\sum_{j=2}^d\p_j\big(2\varrho(\mathring{\bm{F}}^+)[\mathring{F}_{11}]\mathrm{D}_{\rm tan}^{\beta}\psi \mathrm{D}_{\rm tan}^{\beta}  W_{j+1}^+\big)
+\mathring{\rm c}_1 \mathrm{D}_{\rm tan}^{\beta} W \mathrm{D}_{\rm tan}^{\beta} \psi
\end{align}
with
\begin{align}\notag
Q_{2b}:=
-2\varrho(\mathring{\bm{F}}^+) [\mathring{F}_{11}]
\sum_{j=2}^d  \mathrm{D}_{\rm tan}^{\beta}\psi \mathrm{D}_{\rm tan}^{\beta} \p_j W_{j+1}^+ .
\end{align}

In order to deduce the cancellation between terms $Q_{1a}$ and $Q_{2b}$,
we need the following lemma.

\begin{lemma}\label{lem.key2}
If $i=1,\ldots,d$, and $j=2,\ldots,d$, then
 \begin{alignat}{3}
 \label{key2}
& \p_0 F_{ij}^{\pm} =\sum_{k=2}^d \mathring{ {F}}_{kj}^{\pm} \p_k v_i^{\pm} +\mathring{\rm c}_0 \tilde{f} +\underline{\mathring{\rm c}}_1 W
 \qquad &&\textrm{ on $\p\varOmega$},
\\ \label{key3b}
&\sum_{j=2}^d\p_j W_{j+1}^+
=-\varrho(\mathring{\bm{F}}^+)^{-1} \p_{F_{ij}}\varrho(\mathring{\bm{F}}^+) \p_0 F_{ij}^+
+\mathring{\rm c}_0 \tilde{f} +\underline{\mathring{\rm c}}_1 W
 \qquad &&\textrm{ on $\p\varOmega$},
\end{alignat}
where $\p_0$ is defined by \eqref{partial0}.
\end{lemma}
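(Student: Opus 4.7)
The two identities are proved by restricting the linearized interior equations to $\partial\varOmega$ and performing a direct algebraic manipulation.

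For \eqref{key2}, the plan is to start from the evolution equation \eqref{F.equ} for $F_{ij}^\pm$ and take its trace on $\partial\varOmega$. Two simplifications occur there. First, by \eqref{w1.ring.bdy} we have $\mathring{w}_1^\pm = 0$ on $\partial\varOmega$, and the boundary condition $[\mathring{v}] = 0$ from \eqref{bas.c2} ensures (see note (vi)) that $\p_t + \mathring{w}_\ell^\pm \p_\ell$ coincides on $\partial\varOmega$ with the operator $\p_0$ defined in \eqref{partial0}. Second, expanding the term $\mathring{F}_{\ell j}^\pm \p_\ell^{\mathring{\varPhi}^\pm} v_i^\pm$ via the definition \eqref{differential} yields
\begin{align*}
\mathring{F}_{\ell j}^\pm \p_\ell^{\mathring{\varPhi}^\pm} v_i^\pm
= \frac{\mathring{F}_{jN}^\pm}{\p_1 \mathring{\varPhi}^\pm}\,\p_1 v_i^\pm
+ \sum_{\ell=2}^d \mathring{F}_{\ell j}^\pm \p_\ell v_i^\pm.
\end{align*}
For $j = 2, \ldots, d$, the basic-state constraint \eqref{bas.c3b} gives $\mathring{F}_{jN}^\pm = 0$ on $\partial\varOmega$, so the first term drops out. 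Substituting these two observations back into \eqref{F.equ} delivers \eqref{key2} at once.

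For \eqref{key3b}, the plan is to combine \eqref{key2} with a Jacobi-type formula for $\varrho$. The definition \eqref{varrho} identifies $\varrho(\bm{F})$ with $(\det \bm{F}_\star)^{-1}$, where $\bm{F}_\star := (F_{ij})_{i,j=2,\ldots,d}$ is the tangential $(d-1)\times(d-1)$ block of $\bm{F}$. A direct computation then yields
\begin{align*}
\p_{F_{ij}} \varrho(\mathring{\bm F}^+) = -\varrho(\mathring{\bm F}^+) (\mathring{\bm F}_\star^{-1})_{ji}
\qquad \text{for } i,j \in \{2,\ldots,d\},
\end{align*}
and zero otherwise. Consequently,
\begin{align*}
-\varrho(\mathring{\bm F}^+)^{-1}\p_{F_{ij}}\varrho(\mathring{\bm F}^+)\,\p_0 F_{ij}^+
= \sum_{i,j=2}^d (\mathring{\bm F}_\star^{-1})_{ji}\,\p_0 F_{ij}^+.
\end{align*}
Plugging in \eqref{key2} and using the inverse identity $\sum_{j=2}^d (\mathring{\bm F}_\star^{-1})_{ji}\,\mathring{F}_{kj}^+ = \delta_{ik}$ collapses the double sum to $\sum_{i=2}^d \p_i v_i^+$, which by \eqref{W.def.d} is precisely $\sum_{j=2}^d \p_j W_{j+1}^+$. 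All multiplicative factors that appear in the process are smooth functions of the basic state, so the error terms retain the schematic form $\mathring{\rm c}_0 \tilde{f} + \underline{\mathring{\rm c}}_1 W$, yielding \eqref{key3b}.

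I do not anticipate any serious obstacle: both identities are algebraic consequences of the restriction of the interior system to $\partial\varOmega$ combined with the basic-state constraints \eqref{bas.c2}, \eqref{w1.ring.bdy}, and \eqref{bas.c3b}. The most delicate bookkeeping step is simply keeping the generic remainder notation consistent when substituting \eqref{key2} into the contracted expression involving $\p_{F_{ij}}\varrho$.
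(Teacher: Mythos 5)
Your proposal is correct and follows essentially the same route as the paper: \eqref{key2} is obtained exactly as there, by restricting \eqref{F.equ} to $\p\varOmega$ and invoking \eqref{w1.ring.bdy} together with $\mathring{F}_{jN}^{\pm}=0$ from \eqref{bas.c3b}. For \eqref{key3b} the paper splits into $d=2$ (immediate) and $d=3$ (explicit cofactor identity for $\varrho=(F_{22}F_{33}-F_{23}F_{32})^{-1}$), whereas you run the same algebra uniformly via Jacobi's formula $\p_{F_{ij}}\varrho(\mathring{\bm{F}}^+)=-\varrho(\mathring{\bm{F}}^+)\,(\mathring{\bm{F}}_{\star}^{-1})_{ji}$ on the tangential block (invertible for $K$ small), which is only a cosmetic repackaging of the paper's computation and is sound, including the bookkeeping of the $\mathring{\rm c}_0\tilde f+\underline{\mathring{\rm c}}_1W$ remainders.
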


\begin{proof}
Considering the restriction of equations \eqref{F.equ} on boundary $\p\varOmega$,
we  utilize \eqref{w1.ring.bdy} and \eqref{bas.c3b} to deduce identities \eqref{key2}.
In the two-dimensional case ($d=2$), relation \eqref{key3b} follows directly from \eqref{key2}.
If $d=3$, then we obtain from \eqref{key2} that
\begin{align}
\notag
\begin{pmatrix}
\p_2 v_2^{\pm} & \p_3 v_2^{\pm} \\[1.5mm]
\p_2 v_3^{\pm} & \p_3 v_3^{\pm}
\end{pmatrix}
\begin{pmatrix}
\mathring{ {F}}_{22}^{\pm} &\mathring{ {F}}_{23}^{\pm}\\[1.5mm]
 \mathring{ {F}}_{32}^{\pm} &\mathring{ {F}}_{33}^{\pm}
\end{pmatrix}
=\begin{pmatrix}
\p_0  { {F}}_{22}^{\pm} &\p_0 { {F}}_{23}^{\pm}\\[1.5mm]
\p_0 { {F}}_{32}^{\pm} &\p_0  { {F}}_{33}^{\pm}
\end{pmatrix}
+\mathring{\rm c}_0 \tilde{f} +\underline{\mathring{\rm c}}_1 W
\qquad \textrm{ on $\p\varOmega$}.
\end{align}
Then we can deduce \eqref{key3b} by virtue of $W_3^+=v_2^+$, $W_4^+=v_3^+$, and
\begin{align*}
-\frac{\p_{F_{ij}}\varrho(\mathring{\bm{F}}^+)\p_0 F_{ij}^+}{\varrho(\mathring{\bm{F}}^+)^2}
=\mathring{ {F}}_{22}^+\p_0 F_{33}^+-\mathring{ {F}}_{23}^+\p_0 F_{32}^+-\mathring{ {F}}_{32}^+\p_0 F_{23}^++\mathring{ {F}}_{33}^+\p_0 F_{22}^+.
\end{align*}
This completes the proof.
\qed\end{proof}

Thanks to identity \eqref{key3b}, we find
\begin{align}
\notag Q_{2b}=\,&
\underbrace{2\varrho(\mathring{\bm{F}}^+)  [\mathring{F}_{11}]
 \mathrm{D}_{\rm tan}^{\beta}   \psi
 \mathrm{D}_{\rm tan}^{\beta} \big(\varrho(\mathring{\bm{F}}^+)^{-1} \p_{F_{ij}}\varrho(\mathring{\bm{F}}^+) \p_0 F_{ij}^+\big)
}_{\large Q_{2c}} \\[2.5mm]
\label{Q2b}&-2\varrho(\mathring{\bm{F}}^+) [\mathring{F}_{11}]
\mathrm{D}_{\rm tan}^{\beta}\psi
\mathrm{D}_{\rm tan}^{\beta}\big(\mathring{\rm c}_0 \tilde{f} +\mathring{\rm c}_1 W\big)
\qquad \quad
\textrm{on $\p\varOmega$}.
\end{align}
Term $Q_{2c}$ can be decomposed   further as
\begin{align}
\notag
Q_{2c}&=
2 [\mathring{F}_{11}] \p_{F_{ij}}\varrho(\mathring{\bm{F}}^+)  \mathrm{D}_{\rm tan}^{\beta}   \psi \, \p_0 \mathrm{D}_{\rm tan}^{\beta}  F_{ij}^+
+\mathring{\rm c}_0 \mathrm{D}_{\rm tan}^{\beta}   \psi \, [\mathrm{D}_{\rm tan}^{\beta},\, \mathring{\rm c}_0 ] \mathrm{D}_{\rm tan} W\\
\notag& =\mathring{\rm c}_0 \mathrm{D}_{\rm tan}^{\beta} \psi [\mathrm{D}_{\rm tan}^{\beta}, \mathring{\rm c}_0 ] \mathrm{D}_{\rm tan} W
+\sum_{j=2}^d\p_j\left\{2 [\mathring{F}_{11}]  \p_{F_{ij}}\varrho(\mathring{\bm{F}}^+) \mathring{v}_j^+ \mathrm{D}_{\rm tan}^{\beta}\psi \mathrm{D}_{\rm tan}^{\beta}  F_{ij}^+
\right\}\\
\notag &\quad
+ \p_t\left\{ 2[\mathring{F}_{11}] \p_{F_{ij}}\varrho(\mathring{\bm{F}}^+)\mathrm{D}_{\rm tan}^{\beta} \psi \mathrm{D}_{\rm tan}^{\beta}  F_{ij}^+\right\}
+ \mathring{\rm c}_1  \mathrm{D}_{\rm tan}^{\beta}   \psi  \,\mathrm{D}_{\rm tan} ^{\beta}W
\\[3mm]
\label{Q2c} &\quad
\underbrace{-2 [\mathring{F}_{11}]  \p_{F_{ij}}\varrho(\mathring{\bm{F}}^+)   \mathrm{D}_{\rm tan}^{\beta}  F_{ij}^+ \mathrm{D}_{\rm tan}^{\beta}   \p_0 \psi}_{Q_{2d}}
+\mathring{\rm c}_0 \mathrm{D}_{\rm tan} ^{\beta}W [\mathrm{D}_{\rm tan}^{\beta}  ,\mathring{\rm c}_0]\mathrm{D}_{\rm tan} \psi .
\end{align}
In view of condition \eqref{ELP3.b.1}, we derive the following desired cancellation:
\begin{align}
\label{cancellation}
Q_{1a}+Q_{2d}
=2 [\mathring{F}_{11}]  \p_{F_{ij}}\varrho(\mathring{\bm{F}}^+)
\mathrm{D}_{\rm tan}^{\beta}  F_{ij}^+ \mathrm{D}_{\rm tan}^{\beta}  (\mathring{\rm c}_1 \psi)
\qquad \textrm{on $\p\varOmega$}.
\end{align}
Combine \eqref{Q1}--\eqref{Q2a} and \eqref{Q2b}--\eqref{cancellation} to obtain
\begin{align} \label{Q.est1}
\int_{\omega_t} Q\,
=\mathcal{R}_2+
\underbrace{2 \int_{\p\varOmega } [\mathring{F}_{11}]  \p_{F_{ij}}\varrho(\mathring{\bm{F}}^+)  \mathrm{D}_{\rm tan}^{\beta}   \psi  \mathrm{D}_{\rm tan}^{\beta}  F_{ij}^+\, }_{\mathcal{R}_3},
\end{align}
where
\begin{align}
\notag \mathcal{R}_2:=&\int_{\omega_t}
[\mathrm{D}_{\rm tan}^{\beta} ,\,\mathring{\rm c}_0]W \mathrm{D}_{\rm tan}^{\beta}  W
+ \int_{\omega_t}  \mathring{\rm c}_0 \mathrm{D}_{\rm tan}^{\beta}(\mathring{\rm c}_1\psi )\mathrm{D}_{\rm tan}^{\beta} W  \\
\notag &+\int_{\omega_t} \mathring{\rm c}_0 [\mathrm{D}_{\rm tan}^{\beta} ,\,\mathring{\rm c}_0] \mathrm{D}_{\rm tan} \psi \mathrm{D}_{\rm tan}^{\beta}  W
+\int_{\omega_t} \mathring{\rm c}_0 \mathrm{D}_{\rm tan}^{\beta} \psi [\mathrm{D}_{\rm tan}^{\beta}, \mathring{\rm c}_0 ] \mathrm{D}_{\rm tan} W\\
\label{R2.def} &
+\int_{\omega_t} \mathring{\rm c}_1   \mathrm{D}_{\rm tan}^{\beta} \psi \mathrm{D}_{\rm tan}^{\beta} W
+\int_{\omega_t} \mathring{\rm c}_0  \mathrm{D}_{\rm tan}^{\beta}   \psi
\mathrm{D}_{\rm tan}^{\beta}\big(\mathring{\rm c}_0 \tilde{f} +\mathring{\rm c}_1 W\big).
\end{align}

\subsection{Estimate of Term $\mathcal{R}_2$}\label{sec.R2}
In this subsection, we deduce the estimate of term $\mathcal{R}_2$ defined by \eqref{R2.def}.

By virtue of assumption \eqref{bas.bound} and the Sobolev embedding,
there exists some positive constant $K_1$ depending on $[\widebar{F}_{11}]$ such that,
if $K\leq K_1$, then
\begin{align*}
[\mathring{ {F}}_{11}]\geq \frac{[\widebar{F}_{11}]}{2}>0\qquad\,\, \textrm{on $\p\varOmega$}.
\end{align*}
It follows from the boundary condition \eqref{ELP3.b.4} that
\begin{align}
\label{key4}
\p_j \psi= -\frac{1}{[\mathring{ {F}}_{11}]} [W_{d+j+1}]+
\underline{\mathring{\rm c}}_1 \psi \qquad\,
\textrm{on $\p\varOmega$, for $j=2,\ldots,d$}.
\end{align}
If we utilize \eqref{key4} to control terms $\mathcal{R}_2$ and $\mathcal{R}_3$, then
the energy estimates break down when $[\widebar{F}_{11}]$ tends to zero.
Hence, identity \eqref{key4} cannot be used in the subsequent analysis for the proof of Theorem \ref{thm2}.
Then we need to exploit new identities and estimates for $\mathrm{D}_{x'} \psi$.
For this purpose, we apply the interpolation argument to
deduce the following lemma, which is motivated by \cite[Proposition 5.2]{T18MR3721411}.
\begin{lemma} \label{lem.key}
	If the assumptions in Theorem {\rm \ref{thm2}} are satisfied, then
	\begin{align} \label{Rj.def}
	R_j:=F_{j}^+\cdot \mathring{N} -\sum_{i=2}^d\mathring{ {F}}_{ij}^+\p_i\psi
	\qquad\,\, \textrm{defined on $\omega_T$, for $j=2,\ldots,d$},
	\end{align}
satisfies
	\begin{align} \label{Rj.est}
	\|\mathrm{D}_{\rm tan}^{\gamma} R_j(t) \|_{H^{s-|\gamma|-1/2}(\p\varOmega)}^2\lesssim  \mathcal{M}_s(t),
	\end{align}
	for all $t\in [0,T]$ and $\gamma \in \mathbb{N}^d$ with $|\gamma|\leq s-1$,
	where $\mathcal{M}_s(t)$ is defined by \eqref{M.cal}.
\end{lemma}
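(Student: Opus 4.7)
The quantity $R_j$ is the boundary restriction of the linearization, with respect to $(F^+,\varphi)$, of the involution $F_{jN}^+=0$ that holds for the basic state by \eqref{bas.c3b}. Since the unperturbed involution is preserved along the flow, it is natural to expect that $R_j$ satisfies a first-order evolution on $\omega_T$ with controllable right-hand side; combined with $R_j \equiv 0$ for $t<0$, this will yield \eqref{Rj.est}. The plan has three steps: derive the evolution equation exploiting cancellations, run the standard transport-type energy estimate, and apply the trace theorem in time.

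Step 1 (evolution equation). I would apply the tangential transport operator $\partial_0$ defined in \eqref{partial0} to $R_j$. Substituting \eqref{key2} for $\partial_0 F_{ij}^+$ produces the main term $\sum_{k\geq 2}\mathring{F}_{kj}^+\partial_k v_i^+$ plus lower-order contributions in $W$ and $\tilde f$. Contracting against $\mathring{N}$ and invoking $v^+\cdot\mathring{N}=W_2^+=\partial_0\psi-\underline{\mathring{\rm c}}_1\psi$ from \eqref{ELP3.b.1} generates the second-order term $\sum_{k\geq 2}\mathring{F}_{kj}^+\partial_k\partial_0\psi$, which cancels exactly against $-\sum_{i\geq 2}\mathring{F}_{ij}^+\partial_i\partial_0\psi$ coming from commuting $\partial_0$ past $\partial_i$ in the last summand of $R_j$. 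The cross terms $\sum_i\mathring{F}_{ij}^+\sum_\ell(\partial_i\mathring{v}_\ell^+)\partial_\ell\psi$ produced by this commutation then cancel (after swapping indices $i\leftrightarrow\ell$) against $-\sum_{i\geq 2}(\partial_0\mathring{F}_{ij}^+)\partial_i\psi$, once $\partial_0\mathring{F}_{ij}^+$ is rewritten by means of the basic-state constraint \eqref{bas.c6}. The residual satisfies
\begin{equation*}
\partial_0 R_j = G_j \quad\textrm{on } \omega_T, \qquad G_j=\mathring{\rm c}_2\mathrm{D}_{\rm tan}\psi+\underline{\mathring{\rm c}}_2\psi+\mathring{\rm c}_2 W|_{\omega_T}+\mathring{\rm c}_1\tilde f|_{\omega_T},
\end{equation*}
a strictly first-order expression in $(\psi,W,\tilde f)$ with coefficients that are smooth functions of the basic state.

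Step 2 (transport estimate) and Step 3 (time trace). Since $R_j$ vanishes in the past and $\partial_0$ is a tangential derivative on $\omega_T$, a standard $L^2$ energy argument for the transport equation, commuted with $\mathrm{D}_{\rm tan}^\alpha$ for $|\alpha|\leq s$ and augmented with the Moser-type inequalities \eqref{Moser3}--\eqref{Moser4} to absorb coefficient derivatives into $\|(\mathring V,\mathring\varPsi)\|_{H^{s+2}}$, yields $\|R_j\|_{H^s(\omega_t)}^2\lesssim\|G_j\|_{H^{s-1}(\omega_t)}^2$. The right-hand side is bounded by $\mathcal{M}_s(t)$: the $\psi$-contribution uses $\|\psi\|_{H^s(\omega_t)}\sim\|\varPsi\|_{H^s(\varOmega_t)}$ by the tensor-product structure \eqref{varPsi.def}, while the $W$- and $\tilde f$-contributions follow from the trace inequality $\|u|_{\omega_t}\|_{H^{s-1}(\omega_t)}\lesssim\|u\|_{H^s(\varOmega_t)}$ of Lemma \ref{lem.trace1}. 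Finally, for any $\gamma$ with $|\gamma|\leq s-1$, $\mathrm{D}_{\rm tan}^\gamma R_j\in H^{s-|\gamma|}(\omega_t)$ and the trace theorem in the $t$-variable applied to the slab $\omega_t$ delivers $\mathrm{D}_{\rm tan}^\gamma R_j(t,\cdot)\in H^{s-|\gamma|-1/2}(\partial\varOmega)$ with norm $\lesssim\|R_j\|_{H^s(\omega_t)}$, proving \eqref{Rj.est}.

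The hard part is Step 1: the combined cancellation of the second-order $\partial_0\partial_i\psi$ terms and of the cross terms in $\partial_i\mathring{v}^+$ is what makes the entire strategy work. It is precisely this mechanism that forces the extra compatibility condition \eqref{bas.c6} on the basic state, and without it one cannot avoid losing control of $\partial_0 R_j$ at the same regularity as $R_j$ itself, which would spoil the subsequent substitution of $\mathrm{D}_{x'}\psi$ via $R_j$ used in the estimate of $\mathcal{R}_2$ and $\mathcal{R}_3$.
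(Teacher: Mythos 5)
Your Step 1 is essentially the paper's own derivation: applying $\p_0$ to $R_j$, using \eqref{key2} on the boundary, converting between $\p_0\psi$ and $v^+\cdot\mathring N$ via \eqref{ELP3.b.1} together with $\p_0\p_k\mathring\varphi=\p_k\mathring v^+\cdot\mathring N$, and cancelling the cross terms in $\p\mathring v^+$ against $\p_0\mathring F_{ij}^+$ rewritten through \eqref{bas.c6}. One bookkeeping remark: the leftover first-order term is $-\p_1\mathring v_N^+\sum_i\mathring F_{ij}^+\p_i\psi$, a scalar multiple of $F_j^+\cdot\mathring N-R_j$, so it should be absorbed into $\underline{\mathring{\rm c}}_1R_j+\mathring{\rm c}_1W$; keeping a genuine $\mathring{\rm c}_2\mathrm D_{\rm tan}\psi$ in your source $G_j$ leaves it half a derivative too rough for the correct functional step below, since within the a priori argument $\mathrm D_{\rm tan}\psi$ is only controlled in $H^{s-1}(\omega_t)$.

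The genuine gap is in Steps 2--3. The claimed bound $\|R_j\|_{H^s(\omega_t)}^2\lesssim\|G_j\|_{H^{s-1}(\omega_t)}^2$ asserts that the transport equation $\p_0R_j=G_j$ gains a full derivative, which is false: $\p_0$ controls only the derivative along the flow, and derivatives transverse to it are not gained (take $\p_0=\p_t$ and $G_j=G(x')\in H^{s-1}\setminus H^{s}$ switched on at $t=0$; then $R_j=t_+G(x')$ has no extra spatial regularity). The correct energy estimate is at the same order, $\|R_j\|_{H^s(\omega_t)}\lesssim\|G_j\|_{H^{s}(\omega_t)}$, and that right-hand side is \emph{not} controlled by $\mathcal M_s(t)$ of \eqref{M.cal}: when $W,\tilde f\in H^s(\varOmega_t)$ their traces on $\omega_t$ lie only in $H^{s-1/2}(\omega_t)$. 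Hence the half derivative you subsequently give away by taking the trace in $t$ is precisely the half derivative you never had, and the route ``spacetime $H^s(\omega_t)$ bound, then trace in time'' cannot close. The paper avoids this entirely: from the boundary transport equation it derives \emph{fixed-time} estimates $\|\mathrm D_{\rm tan}^{\gamma}R_j(t)\|_{H^{m}(\p\varOmega)}\lesssim\|\mathring{\rm c}_1\tilde f+\mathring{\rm c}_1W+\mathring{\rm c}_2\psi\|_{H^{m+|\gamma|}(\omega_t)}$ for integer $m$, interpolates in $m$ to reach the half-integer index $s-|\gamma|-1/2$ on $\p\varOmega$ while the source is needed only in $H^{s-1/2}(\omega_t)$, and this is bounded by the interior norm $\|\mathring{\rm c}_1\tilde f+\mathring{\rm c}_1W+\mathring{\rm c}_2\varPsi\|_{H^{s}(\varOmega_t)}\lesssim\sqrt{\mathcal M_s(t)}$ using the trace theorem in the favorable direction plus Moser estimates. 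Your proof needs this interpolation-at-fixed-time mechanism (or an equivalent fractional energy estimate) in place of the smoothing claim; otherwise \eqref{Rj.est} does not follow.
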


\begin{proof}
	Thanks to  \eqref{bas.c3b} and \eqref{key2}, we have
	\begin{align*}
	\p_0 F_j^+\cdot  \mathring{N}
	&=\sum_{k=2}^d \mathring{ {F}}_{k j}^+ \p_{k} (v^{+}\cdot\mathring{N}  )
	+\sum_{k,i=2}^d v^+_{i} \mathring{ {F}}_{k j}^+ \p_{i} \p_k \mathring{\varphi}
	+\mathring{\rm c}_1 \tilde{f} +\underline{\mathring{\rm c}}_1 W\\
	&=\sum_{i=2}^d \mathring{ {F}}_{i j}^+ \p_{i} (v^{+}\cdot\mathring{N}  )
	+\mathring{\rm c}_1 \tilde{f} +{\mathring{\rm c}}_1 W.
	\end{align*}
	Since, for $k=2,\ldots,d$,
	\begin{align}
	\p_0\p_{k}\mathring{\varphi} =
	\p_{k} (\mathring{v}^+\cdot \mathring{N})+\mathring{v}^+_i\p_i\p_{k} \mathring{\varphi}
	=\p_{k} \mathring{v}^+\cdot \mathring{N}\qquad
	\textrm{on $\p\varOmega$},
	\label{bas.id2}
	\end{align}
we have
\begin{align*}
\p_0 (F_j^+\cdot  \mathring{N} )
=\sum_{i=2}^d \mathring{ {F}}_{i j}^+ \p_{i} (v^{+}\cdot\mathring{N}  )
+\mathring{\rm c}_1 \tilde{f} +{\mathring{\rm c}}_1 W.
\end{align*}
It follows from \eqref{bas.c6} and  \eqref{ELP3.b.1} that
	\begin{align*}
	-\sum_{i=2}^d \p_0 (\mathring{ {F}}_{i j}^+\p_{i} \psi)
	&=
-\sum_{i=2}^d  \mathring{ {F}}_{i j}^+
 \Big(\p_{i}\p_0 \psi  -\sum_{\ell =2}^d \p_i \mathring{v}_{\ell}^+ \p_{\ell} \psi  \Big)
	-\sum_{i,\ell=2}^d \mathring{ {F}}_{\ell  j}^+  \p_{\ell}   \mathring{v}_i^+\p_{i} \psi
	\\
	&=-\sum_{i=2}^d  \mathring{ {F}}_{i j}^+
	\big(\p_{i}(v^{+}\cdot\mathring{N} ) +\mathring{\rm c}_1 \p_i \psi\big)
	+ \mathring{\rm c}_2 \psi .
	\end{align*}
Thanks to \eqref{bas.id2}, we have
\begin{align} \notag
	\p_0 R_j+ \mathring{\rm c}_1 R_j=\mathring{\rm c}_1 \tilde{f} +{\mathring{\rm c}}_1 W
	+\mathring{\rm c}_2 \psi\qquad \textrm{on $\p\varOmega$}.
\end{align}
Using the standard arguments of the energy method yields
	\begin{align*}
	\|\mathrm{D}_{\rm tan}^{\gamma} R_j(t)\|_{H^{m}(\p\varOmega)}\lesssim
	\|\mathring{\rm c}_1 \tilde{f} +{\mathring{\rm c}}_1 W
	+\mathring{\rm c}_2 \psi\|_{H^{m+|\gamma|}(\omega_t)}
	\qquad\, \textrm{for $ m\in \mathbb{N}$}.
	\end{align*}
Applying the interpolation property (see \cite[Lemma 22.3]{T07MR2328004}),
the trace theorem, and the Moser-type calculus inequality, we have
	\begin{align*}
	\|\mathrm{D}_{\rm tan}^{\gamma} R_j(t)\|_{H^{s-|\gamma|-1/2}(\p\varOmega)}&\lesssim
	\|\mathring{\rm c}_1 \tilde{f} +{\mathring{\rm c}}_1 W
	+\mathring{\rm c}_2 \psi\|_{H^{s-1/2}(\omega_t)}\\
	&\lesssim \|\mathring{\rm c}_1 \tilde{f} +{\mathring{\rm c}}_1 W
	+\mathring{\rm c}_2 \varPsi\|_{H^{s}(\varOmega_t)}\lesssim \sqrt{\mathcal{M}_s(t)},
	\end{align*}
where we utilize
$\|\mathring{\rm c}_2\|_{W^{1,\infty}(\varOmega_t) }\lesssim K$
and
$\|(W,\,\varPsi,\,\tilde{f}) \|_{L^{\infty}(\varOmega_t)}\lesssim
\|(W,\,\varPsi,\,\tilde{f}) \|_{H^{3}(\varOmega_t)}$
by the Sobolev embedding theorem.
This completes the proof.
\qed\end{proof}

By virtue of \eqref{Rj.def}, from \eqref{background} and \eqref{varrho.bar},
we obtain the following assertions:
\begin{itemize}
\item If $d=2$, then
\begin{align}
\p_2\psi=(\mathring{ {F}}_{22}^+)^{-1}(F_{12}^+ - \p_2 \mathring{\varphi} F_{22}^+-R_2)
= \varrho(\widebar{\bm{F}}^+)F_{12}^+
 +\underline{\mathring{\rm c}}_1 W +\mathring{\rm c}_0 R_2.\label{psi.d2.id.2D}
\end{align}

\item  If $d=3$, then
\begin{align*}
\begin{pmatrix}
\p_2 \psi\\[1mm] \p_3 \psi
\end{pmatrix}
=\varrho (\mathring{\bm{F}}^+)
\begin{pmatrix}
\mathring{{F}}_{33}^+ &-\mathring{{F}}_{32}^+\\[1mm]
-\mathring{{F}}_{23}^+&\mathring{{F}}_{22}^+
\end{pmatrix}
\begin{pmatrix}
F_{2}^+\cdot \mathring{N}-R_2\\[1mm]
F_{3}^+\cdot \mathring{N}-R_3
\end{pmatrix},
\end{align*}
which implies
	\begin{align}
	\label{psi.d2.id}&\p_2\psi= \varrho(\widebar{\bm{F}}^+)\widebar{F}_{33} F_{12}^++\underline{\mathring{\rm c}}_1 W +\mathring{\rm c}_0 R_2+\mathring{\rm c}_0 R_3,\\
	\label{psi.d3.id}&\p_3\psi= \varrho(\widebar{\bm{F}}^+)\widebar{F}_{22} F_{13}^++\underline{\mathring{\rm c}}_1 W +\mathring{\rm c}_0 R_2+\mathring{\rm c}_0 R_3.
	\end{align}
\end{itemize}
Identities \eqref{psi.d2.id.2D}--\eqref{psi.d3.id} and estimate \eqref{Rj.est} enable us to control term
$\mathcal{R}_2$. More precisely, from \eqref{psi.d2.id.2D}--\eqref{psi.d3.id} and \eqref{ELP3.b.1},
we have
\begin{align}
\label{key4'}
\mathrm{D}_{\rm tan} \psi= \mathring{\rm c}_1 W+\sum_{j=2}^d \mathring{\rm c}_0 R_j \qquad \textrm{on $\p\varOmega$},
\end{align}
where coefficients $\mathring{\rm c}_1$ and $\mathring{\rm c}_0$ are
 independent of $[\widebar{F}_{11}]$.
Assume without loss of generality that $0<\beta'\leq \beta$, $|\beta'|=1$, and $|\beta|\leq s$.
For the last term in $\mathcal{R}_2$,
we employ \eqref{key4'} to obtain
\begin{align}
\notag &\int_{\omega_t} \mathring{\rm c}_0  \mathrm{D}_{\rm tan}^{\beta}   \psi
\mathrm{D}_{\rm tan}^{\beta}   \big(\mathring{\rm c}_0 \tilde{f} +\mathring{\rm c}_1 W\big)\\[-1mm]
\notag &\quad \lesssim
\Big\| \mathring{\rm c}_0  \mathrm{D}_{\rm tan}^{\beta-\beta'}
\big( \mathring{\rm c}_1 W+\sum_{j=2}^d \mathring{\rm c}_0 R_j \big)\Big\|_{H^{1/2}(\omega_t)}
\Big\|\mathrm{D}_{\rm tan}^{\beta}   \big(\mathring{\rm c}_0 \tilde{f} +\mathring{\rm c}_1 W\big)\Big\|_{H^{-1/2}(\omega_t)}\\[-1mm]
\label{R2.est1}&\quad \lesssim
\Big\|  \mathring{\rm c}_1 W
+\sum_{j=2}^d \mathring{\rm c}_0 \widetilde{R}_j  \Big\|_{H^{s}(\varOmega_t)}
\Big\| \mathring{\rm c}_0 \tilde{f} +\mathring{\rm c}_1 W \Big\|_{H^{s}(\varOmega_t)},
\end{align}
where $\widetilde{R}_j$ is the extension of $R_j$ from $\omega_T$ to $\varOmega_T$ satisfying
\begin{align} \label{Rj.est2}
\|\widetilde{R}_j\|_{H^{m}(\varOmega_t)}\lesssim \|{R}_j\|_{H^{m-1/2}(\omega_t)}
\qquad \textrm{for $m=1,\ldots,s$}.
\end{align}
Applying the Moser-type calculus inequality to \eqref{R2.est1}
and using estimates \eqref{Rj.est} and \eqref{Rj.est2}, we obtain
\begin{align}
\notag  \int_{\omega_t} \mathring{\rm c}_0  \mathrm{D}_{\rm tan}^{\beta}   \psi
\mathrm{D}_{\rm tan}^{\beta}   \big(\mathring{\rm c}_0 \tilde{f} +\mathring{\rm c}_1 W\big)
 \lesssim \mathcal{M}_s(t).
\end{align}
As the other terms in \eqref{R2.def}
can be handled similarly, we omit the details and conclude
\begin{align}
\mathcal{R}_2  \lesssim \mathcal{M}_s(t).
\label{R2.est}
\end{align}

\subsection{Estimate of Term $\mathcal{R}_3$ with the Time Derivative 
}\label{sec.tan4.a}
This subsection is devoted to deriving the estimate of term $\mathcal{R}_3$ given in \eqref{Q.est1}
for $\beta=(\beta_0,\beta_2,\ldots,\beta_d)$ satisfying $\beta_0\geq 1$
and $|\beta|\leq s$.

Recalling the definition of background state $(\widebar{U}^{\pm},\widebar{\varPhi}^{\pm})$
in \eqref{background}
and using identity \eqref{key2}, we have
\begin{align} \label{key2b}
\p_t F_{ij}^+&
=  \widebar{F}_{jj} \p_j v_i^+
+\underline{\mathring{\rm c}}_0 \mathrm{D}_{x'} W +\underline{\mathring{\rm c}}_1 W+\mathring{\rm c}_0 \tilde{f}
\quad \textrm{on $\p\varOmega$, for $i,j=2,\ldots,d$},
\end{align}
where $\mathrm{D}_{x'}:=(\p_2,\ldots,\p_d)$.
In light of \eqref{key2b}, we compute
\begin{align}
\notag &[\mathring{F}_{11}] \p_{F_{ij}}\varrho(\mathring{\bm{F}}^+)   \mathrm{D}_{\rm tan}^{\beta}  F_{ij}^+\\[2mm]
&\quad =\notag  [\mathring{F}_{11}] \p_{F_{ij}}\varrho(\mathring{\bm{F}}^+)
\mathrm{D}_{\rm tan}^{\beta-\bm{e}_1}
\Big(\widebar{F}_{jj} \p_j v_i^+
+\underline{\mathring{\rm c}}_0 \mathrm{D}_{x'} W +\underline{\mathring{\rm c}}_1 W+\mathring{\rm c}_0 \tilde{f} \Big)\\
\notag &\quad =-\sum_{j=2}^d\, [\widebar{F}_{11}] \varrho(\widebar{\bm{F}}^+)\mathrm{D}_{\rm tan}^{\beta-\bm{e}_1}  \p_j v_j^+
+\underline{\mathring{\rm c}}_0 \mathrm{D}_{\rm tan}^{\beta-\bm{e}_1} \left( \mathring{\rm c}_0 \mathrm{D}_{x'} W\right) \\
\label{R3.id1}&\quad  \quad
+\mathring{\rm c}_0 \mathrm{D}_{\rm tan}^{\beta-\bm{e}_1}
\big(\underline{\mathring{\rm c}}_0 \mathrm{D}_{x'}   W
+\underline{\mathring{\rm c}}_1 W+\mathring{\rm c}_0 \tilde{f} \big)
\qquad\, \textrm{on $\p\varOmega$}.
\end{align}
Noting from \eqref{ELP3.b.1} that
\begin{align}
\label{psi.t.id}
\p_t \psi=W_2^++\underline{\mathring{\rm c}}_0  \mathrm{D}_{x'}\psi +\underline{\mathring{\rm c}}_1\psi \qquad
\textrm{on $\p\varOmega$},
\end{align}
we have
\begin{align} \label{R3.est1}
\mathcal{R}_3=
2 \int_{\p\varOmega } [\mathring{F}_{11}]  \p_{F_{ij}}\varrho(\mathring{\bm{F}}^+)
  \mathrm{D}_{\rm tan}^{\beta}  F_{ij}^+\mathrm{D}_{\rm tan}^{\beta}   \psi
=\sum_{i=1}^5 \mathcal{R}_{3i},
\end{align}
where
\begin{align*}
\mathcal{R}_{31} &:=-\sum_{j=2}^d\, 2 [\widebar{F}_{11}] \varrho(\widebar{\bm{F}}^+)\int_{\p\varOmega }    \mathrm{D}_{\rm tan}^{\beta-\bm{e}_1} W_2^+  \mathrm{D}_{\rm tan}^{\beta-\bm{e}_1} \p_j v_j^+ ,\\
\mathcal{R}_{32} &:=-\sum_{j=2}^d\, 2 [\widebar{F}_{11}] \varrho(\widebar{\bm{F}}^+)
\int_{\p\varOmega }    \mathrm{D}_{\rm tan}^{\beta-\bm{e}_1}
\left(\underline{\mathring{\rm c}}_0  \mathrm{D}_{x'}\psi \right) \mathrm{D}_{\rm tan}^{\beta-\bm{e}_1} \p_j v_j^+,\\
\mathcal{R}_{33} &:=-\sum_{j=2}^d\, 2 [\widebar{F}_{11}] \varrho(\widebar{\bm{F}}^+)
\int_{\p\varOmega }    \mathrm{D}_{\rm tan}^{\beta-\bm{e}_1}
\left(\underline{\mathring{\rm c}}_1\psi\right) \mathrm{D}_{\rm tan}^{\beta-\bm{e}_1} \p_j v_j^+,\\
\mathcal{R}_{34} &:=\int_{\p\varOmega }    \left(  \underline{\mathring{\rm c}}_0 \mathrm{D}_{\rm tan}^{\beta-\bm{e}_1} \left( \mathring{\rm c}_0 \mathrm{D}_{x'} W\right)
+\mathring{\rm c}_0 \mathrm{D}_{\rm tan}^{\beta-\bm{e}_1}  \mathrm{D}_{x'} (\underline{\mathring{\rm c}}_0  W)  \right)\mathrm{D}_{\rm tan}^{\beta} \psi ,\\[1.5mm]
\mathcal{R}_{35} &:=\int_{\p\varOmega }    \mathring{\rm c}_0 \mathrm{D}_{\rm tan}^{\beta-\bm{e}_1} \big( \underline{\mathring{\rm c}}_1 W+\mathring{\rm c}_0 \tilde{f} \,\big) \mathrm{D}_{\rm tan}^{\beta}\psi.
\end{align*}

Let us first estimate $\mathcal{R}_{32}$ as
\begin{align}
|\mathcal{R}_{32}|
 &\lesssim
 \bigg| \underbrace{\int_{\p\varOmega }
\underline{\mathring{\rm c}}_0  \mathrm{D}_{\rm tan}^{\beta-\bm{e}_1}    \mathrm{D}_{\rm \tan}\psi
 \mathrm{D}_{\rm tan}^{\beta-\bm{e}_1} \mathrm{D}_{x'} W}_{\mathcal{R}_{32}^a}
+ \underbrace{ \int_{\p\varOmega }
[\mathrm{D}_{\rm tan}^{\beta-\bm{e}_1}, \underline{\mathring{\rm c}}_0]    \mathrm{D}_{x'}\psi
\mathrm{D}_{\rm tan}^{\beta-\bm{e}_1}\mathrm{D}_{x'}   W }_{\mathcal{R}_{32}^b} \bigg|.
 \notag
\end{align}
In view of \eqref{key4'}, we employ the classical product estimate
$$
\|uv\|_{H^{1/2}(\mathbb{R}^{d-1})}\lesssim \|u\|_{H^{3/2}(\mathbb{R}^{d-1})}
\|v\|_{H^{1/2}(\mathbb{R}^{d-1})}
$$
to obtain
\begin{align}
\notag |\mathcal{R}_{32}^a| &\lesssim
\Big\|
\underline{\mathring{\rm c}}_0  \mathrm{D}_{\rm tan}^{\beta-\bm{e}_1}\big(
\mathring{\rm c}_1 W+\sum_{j=2}^d\mathring{\rm c}_0  R_j  \big)
 \Big\|_{H^{1/2}(\p\varOmega)}
\left\| \mathrm{D}_{\rm tan}^{\beta-\bm{e}_1} \mathrm{D}_{x'} W\right\|_{H^{-1/2}(\p\varOmega)}\\
&\lesssim
\left\|
\underline{\mathring{\rm c}}_0  \right\|_{H^{3}(\varOmega_t)}
\Big\| \mathrm{D}_{\rm tan}^{\beta-\bm{e}_1}\big(
\mathring{\rm c}_1 W+\sum_{j=2}^d \mathring{\rm c}_0 R_j  \big)
\Big\|_{H^{1/2}(\p\varOmega)}
\left\| \mathrm{D}_{\rm tan}^{\beta-\bm{e}_1}   W\right\|_{H^{1/2}(\p\varOmega)}.
\label{R32a.e1}
\end{align}
Utilize the trace theorem, \eqref{MTT.inequ}, and the Moser-type calculus inequality \eqref{Moser4} to
obtain
\begin{align}
\notag  \left\| \mathrm{D}_{\rm tan}^{\beta-\bm{e}_1}\left(
\mathring{\rm c}_1 W \right)
\right\|_{H^{1/2}(\p\varOmega)}^2 
&\lesssim
\left\| \mathring{\rm c}_1  \mathrm{D}_{\rm tan}^{\beta-\bm{e}_1} W
\right\|_{H^{1}(\varOmega)} ^2
+\left\|   [\mathrm{D}_{\rm tan}^{\beta-\bm{e}_1},\mathring{\rm c}_1] W
\right\|_{H^{2}(\varOmega_t)}^2  \\
&\lesssim
\VERT W(t)\VERT_s^2
+\mathcal{M}_s(t).
\label{R32a.e2}
\end{align}
It follows from \eqref{Rj.est}, the trace theorem, \eqref{MTT.inequ},
and \eqref{Rj.est2} that
\begin{align}
\notag
\left\| \mathrm{D}_{\rm tan}^{\beta-\bm{e}_1}\left(
\mathring{\rm c}_0 R_j  \right)
\right\|_{H^{1/2}(\p\varOmega)}^2
&\lesssim
\left\| \mathring{\rm c}_0 \mathrm{D}_{\rm tan}^{\beta-\bm{e}_1}   R_j
\right\|_{H^{1/2}(\p\varOmega)}^2
+\left\|  [\mathrm{D}_{\rm tan}^{\beta-\bm{e}_1}, \mathring{\rm c}_0]   R_j
\right\|_{H^{1/2}(\p\varOmega)}^2\\
&\lesssim
\mathcal{M}_s(t)
+\left\|  [\mathrm{D}_{\rm tan}^{\beta-\bm{e}_1}, \mathring{\rm c}_0]   \widetilde{R}_j
\right\|_{H^{2}(\varOmega_t)}^2
\lesssim \mathcal{M}_s(t).
\label{R32a.e3}
\end{align}
Plugging \eqref{R32a.e2}--\eqref{R32a.e3} into \eqref{R32a.e1} yields
\begin{align}
 |\mathcal{R}_{32}^a|  \lesssim
\left\|
\underline{\mathring{\rm c}}_0 \right\|_{H^{3}(\varOmega_t)} \VERT W(t)\VERT_s^2
+\mathcal{M}_s(t).
\label{R32a.e0}
\end{align}
For $\mathcal{R}_{32}^b$, we find
\begin{align}
\notag \mathcal{R}_{32}^b=
\,& -\int_{\p\varOmega}
\mathrm{D}_{x'} [\mathrm{D}_{\rm tan}^{\beta-\bm{e}_1}, \underline{\mathring{\rm c}}_0]    \mathrm{D}_{x'}\psi
\mathrm{D}_{\rm tan}^{\beta-\bm{e}_1}  W  \\
  =\,& -\int_{\omega_t }
\p_t\left\{\mathrm{D}_{x'} [\mathrm{D}_{\rm tan}^{\beta-\bm{e}_1}, \underline{\mathring{\rm c}}_0]    \mathrm{D}_{x'}\psi
\mathrm{D}_{\rm tan}^{\beta-\bm{e}_1}  W\right\}  .
\label{R32b.e1}
\end{align}
Hence, it follows from \eqref{Rj.est}, \eqref{key4'}, and \eqref{Rj.est2} that
\begin{align}
\notag |\mathcal{R}_{32}^b| \lesssim \,&
\left\| \p_t \mathrm{D}_{x'} [\mathrm{D}_{\rm tan}^{\beta-\bm{e}_1}, \underline{\mathring{\rm c}}_0]   \mathrm{D}_{x'}\psi
\right\|_{H^{-1/2}(\omega_t)}
\left\|\mathrm{D}_{\rm tan}^{\beta-\bm{e}_1}  W \right\|_{H^{1/2}(\omega_t)} \\[1mm]
\notag &+
\left\|  \mathrm{D}_{x'} [\mathrm{D}_{\rm tan}^{\beta-\bm{e}_1}, \underline{\mathring{\rm c}}_0]   \mathrm{D}_{x'}\psi
\right\|_{H^{1/2}(\omega_t)}
\left\|\p_t\mathrm{D}_{\rm tan}^{\beta-\bm{e}_1}  W \right\|_{H^{-1/2}(\omega_t)} \\
\notag \lesssim \,&
\left\|  W \right\|_{H^{s}(\varOmega_t)}
\Big\|  \mathrm{D}[\mathrm{D}_{\rm tan}^{\beta-\bm{e}_1}, \underline{\mathring{\rm c}}_0]
\Big( \mathring{\rm c}_1 W+  \sum_{j=2}^d\mathring{\rm c}_0 \widetilde{R}_j \Big)  \Big\|_{H^{1}(\varOmega_t)}\\
\lesssim \, &\mathcal{M}_s(t).
\label{R32b.e0}
\end{align}
We decompose $\mathcal{R}_{34}$ as
\begin{align*}
&\int_{\p\varOmega }  \underline{\mathring{\rm c}}_0   \mathrm{D}_{\rm tan}^{\beta} \psi
\mathrm{D}_{\rm tan}^{\beta-\bm{e}_1} \mathrm{D}_{x'}   W\\
&+\int_{\p\varOmega }
\mathrm{D}_{\rm tan}^{\beta} \psi
\left( \mathring{\rm c}_0[\mathrm{D}_{\rm tan}^{\beta-\bm{e}_1}, \mathring{\rm c}_0 ]  \mathrm{D}_{x'} W
 +\mathring{\rm c}_0[\mathrm{D}_{\rm tan}^{\beta-\bm{e}_1}\mathrm{D}_{x'}, \mathring{\rm c}_0 ]   W
 \right).
\end{align*}
The first term in this decomposition can be estimated in the same way as $\mathcal{R}_{32}^a$,
and the second term in this decomposition along with
terms $\mathcal{R}_{33}$ and $\mathcal{R}_{35}$ can be controlled as  $\mathcal{R}_{32}^b$.
In conclusion, we arrive at
\begin{align}
 \sum_{i=2}^5 |\mathcal{R}_{3i}|
\label{R3.est1b} \lesssim  \left\|
\underline{\mathring{\rm c}}_0 \right\|_{H^{3}(\varOmega_t)} \VERT W(t)\VERT_s^2
+\mathcal{M}_s(t).
\end{align}

Let us deduce the estimate of term $\mathcal{R}_{31}$.
In view of \eqref{trace.est2},  we infer
\begin{align}
\notag |\mathcal{R}_{31}|
\leq\,&  2  [\widebar{F}_{11}]  \varrho(\widebar{\bm{F}}^+)
 \|\mathrm{D}_{\rm tan}^{\beta-\bm{e}_1}  W_2^+\|_{H^1(\varOmega)}
 \sum_{j=2}^d  \|\mathrm{D}_{\rm tan}^{\beta-\bm{e}_1}   v_j^+\|_{H^1(\varOmega)} \\
\leq\,& \!
\left\{
\begin{aligned}
&[\widebar{F}_{11}]  \varrho(\widebar{\bm{F}}^+) \|\mathrm{D}_{\rm tan}^{\beta-\bm{e}_1}   (W_2^+,\,W_3^+)\|_{H^1(\varOmega)}^2\ \  &&\textrm{if } d=2,\\[1.5mm]
&\sqrt{2}  [\widebar{F}_{11}]  \varrho(\widebar{\bm{F}}^+) \|\mathrm{D}_{\rm tan}^{\beta-\bm{e}_1}   (W_2^+,\,W_3^+,\,W_4^+)\|_{H^1(\varOmega)}^2\ \  &&\textrm{if } d=3.
\end{aligned}
\right.
\label{R3a.est1}
\end{align}
We now make the estimate for the term on the right-hand side of \eqref{R3a.est1}.
Since $|\beta|\leq s$,
we apply inequality \eqref{MTT.inequ} to obtain
\begin{align}
\sum_{j=2}^{d+1} \|\mathrm{D}_{\rm tan}^{\beta-\bm{e}_1}   W_j^+\|_{L^2(\varOmega)}^2
\lesssim \|W\|_{H^s(\varOmega_t)}^2.
\label{R3a.est1a}
\end{align}
According to definition \eqref{E.tan.def} for the instant tangential energy
$\mathcal{E}_{\rm tan}^{\beta}(t)$, we have
\begin{align}
 \sum_{\ell=2}^d  \sum_{j=2}^{d+1}
\|\p_{\ell} \mathrm{D}_{\rm tan}^{\beta-\bm{e}_1}   W_j^+\|_{L^2(\varOmega)}^2
 \lesssim
\left\{
\begin{aligned}
&\mathcal{E}_{\rm tan}^{\beta-\bm{e}_1+\bm{e}_2}(t)  &  &\textrm{if } d=2,\\[1mm]
&\mathcal{E}_{\rm tan}^{\beta-\bm{e}_1+\bm{e}_2}(t)+\mathcal{E}_{\rm tan}^{\beta-\bm{e}_1+\bm{e}_3}(t) &  &\textrm{if }  d=3.
\end{aligned}
\right.
\label{R3a.est1b}
\end{align}
As for the normal derivatives in \eqref{R3a.est1},
we utilize \eqref{Wnc.id}  to derive
\begin{align}
\notag
\begin{pmatrix}
0\\
\p_1 W_{\rm nc}^{\pm}\\ 0
\end{pmatrix}=\,&
\mp\bm{B} \mathcal{A}_0(\widebar{U}^{\pm},\,\widebar{\varPhi}^{\pm}) \partial_t W^{\pm}
\mp\sum_{j=2}^d \bm{B} \mathcal{A}_j(\widebar{U}^{\pm},\,\widebar{\varPhi}^{\pm})  \partial_j W^{\pm}
+\underline{\mathring{\rm c}}_1 \mathrm{D}_{\rm tan} W\\
&
-\mathring{\bm{B}}^{\pm}\mathring{\mathcal{A}}_{1b}^{\pm}\partial_1 W^{\pm}
-\mathring{\bm{B}}^{\pm}\mathring{\mathcal{A}}_4^{\pm}W^{\pm}
+\mathring{\bm{B}}^{\pm}\mathring{J}_{\pm}^{\mathsf{T}}\tilde{f}^{\pm},
\label{normal.id1}
\end{align}
where $\bm{B}(U,\varPhi)$ is defined by \eqref{B.bm}.
By virtue of identities \eqref{A2.cal}--\eqref{A3.cal}, we can compute the following assertions:
\begin{itemize}
 \item For $d=2$, the second and third components of
 $$\bm{B} \mathcal{A}_0(\widebar{U}^{+},\,\widebar{\varPhi}^{+}) \partial_t W^+
 +\bm{B} \mathcal{A}_2(\widebar{U}^{+},\,\widebar{\varPhi}^{+})  \partial_2 W^+$$
 are
$ \frac{1}{\bar{\rho}^+(\widebar{F}_{11}^+)^2 }\p_t(W_4^+-W_1^+)$  and
 $-\frac{1}{\widebar{F}_{11}^+ }\p_t W_5^+$, respectively.
\item For $d=3$, the second, third, and fourth components of
$$\bm{B} \mathcal{A}_0(\widebar{U}^{+},\,\widebar{\varPhi}^{+}) \partial_t W^+
+\bm{B} \mathcal{A}_2(\widebar{U}^{+},\,\widebar{\varPhi}^{+})  \partial_2 W^+
+\bm{B} \mathcal{A}_3(\widebar{U}^{+},\,\widebar{\varPhi}^{+})  \partial_3 W^+$$
are
$\frac{1}{\bar{\rho}^+(\widebar{F}_{11}^+)^2 }\p_t(W_5^+-W_1^+),$
$-\frac{1}{\widebar{F}_{11}^+ }\p_t W_6^+,$  and
$-\frac{1}{\widebar{F}_{11}^+ }\p_t W_7^+$, respectively.
\end{itemize}
Using \eqref{E.tan.def}, \eqref{normal.id1}, and the assertions above, we conclude
\begin{align}
 \sum_{j=2}^{d+1}
\|\p_{1} \mathrm{D}_{\rm tan}^{\beta-\bm{e}_1}   W_j^+\|_{L^2(\varOmega)}^2
\notag \leq \,&
   \big\|\mathrm{D}_{\rm tan}^{\beta-\bm{e}_1}\big(
-\mathring{\bm{B}}\mathring{\mathcal{A}}_{1b}\partial_1 W+\mathring{\bm{B}} \mathring{\mathcal{A}}_4 W
+\mathring{\bm{B}} \mathring{J}^{\mathsf{T}} \tilde{f}\big)\big\|_{L^2(\varOmega)}^2\\[-1.5mm]
&+\|\mathrm{D}_{\rm tan}^{\beta-\bm{e}_1}(\underline{\mathring{\rm c}}_1 \mathrm{D}_{\rm tan} W)\|_{L^2(\varOmega)}^2
+\frac{\mathcal{E}_{\rm tan}^{\beta}(t)}{\bar{\rho}^+(\widebar{F}_{11}^+)^2}.
\label{R3a.est1c}
\end{align}
Employ \eqref{MTT.inequ} and the Moser-type calculus inequality \eqref{Moser3} to derive
\begin{align}
\label{R3a.est1d} \|\mathrm{D}_{\rm tan}^{\beta-\bm{e}_1}(\underline{\mathring{\rm c}}_1 \mathrm{D}_{\rm tan} W)(t)\|_{L^2(\varOmega)}^2
\lesssim \|\underline{\mathring{\rm c}}_1  \|_{L^{\infty}(\varOmega_T)} \VERT W(t)\VERT_{s}^2
+ \mathcal{M}_s(t) .
\end{align}
Plug \eqref{Wnc.est1b}--\eqref{Wnc.est1e}, \eqref{Wnc.est1c}--\eqref{sigma.est},
 and \eqref{R3a.est1d} into \eqref{R3a.est1c},
insert the resulting estimate and \eqref{R3a.est1a}--\eqref{R3a.est1b} into \eqref{R3a.est1},
and use \eqref{tan.est2}, \eqref{Q.est1}, \eqref{R2.est}, \eqref{R3.est1}, and
\eqref{R3.est1b} to obtain
\begin{align}
\notag \mathcal{E}_{\rm tan}^{\beta}(t)\leq
\;&
 C\mathcal{M}_s(t)+
C\|\underline{\mathring{\rm c}}_1 \|_{H^{3}(\varOmega_T)} \VERT W(t)\VERT_{s}^2\\[1mm]
&
+\left\{
\begin{aligned}
&\frac{[\widebar{F}_{11}]}{\widebar{F}_{11}^+} \mathcal{E}_{\rm tan}^{\beta}(t)
+C\mathcal{E}_{\rm tan}^{\beta-\bm{e}_1+\bm{e}_2}(t)
&\ & \textrm{if } d=2,\\
&\sqrt{2}\,\frac{[\widebar{F}_{11}]}{\widebar{F}_{11}^+} \mathcal{E}_{\rm tan}^{\beta}(t)
+C\mathcal{E}_{\rm tan}^{\beta-\bm{e}_1+\bm{e}_2}(t)
+C\mathcal{E}_{\rm tan}^{\beta-\bm{e}_1+\bm{e}_3}(t)&\  & \textrm{if } d=3.
\end{aligned}\right.
\notag
\end{align}
Since $\widebar{F}_{11}^+>\widebar{F}_{11}^->0$, we always see that
$[\widebar{F}_{11}]/\widebar{F}_{11}^+<1$.
Moreover, it follows from \eqref{thm.H1} that $[\widebar{F}_{11}]/\widebar{F}_{11}^+<\frac{1}{2}$
for dimension $d=3$.
Thus, we can obtain
\begin{align}
\notag \mathcal{E}_{\rm tan}^{\beta}(t)\lesssim
\;& \mathcal{M}_s(t)+
\|\underline{\mathring{\rm c}}_1 \|_{H^{3}(\varOmega_T)}\VERT W(t)\VERT_{s}^2\\[1mm]
& +\left\{
\begin{aligned}
& \mathcal{E}_{\rm tan}^{\beta-\bm{e}_1+\bm{e}_2}(t)
&& \textrm{if } d=2,\\[1mm]
& \mathcal{E}_{\rm tan}^{\beta-\bm{e}_1+\bm{e}_2}(t)
+  \mathcal{E}_{\rm tan}^{\beta-\bm{e}_1+\bm{e}_3}(t)&\qquad\quad  & \textrm{if } d=3,
\end{aligned}\right.
\label{E.tan.est3}
\end{align}
for all $\beta=(\beta_0,\beta_2,\ldots,\beta_d)\in\mathbb{N}^d$ with $|\beta|\leq s$ and $\beta_0\geq 1$.
Inequality \eqref{E.tan.est3} reduces the estimate
of each instant tangential energy
 to that with one less time derivative.
Therefore, we are led to estimate $\mathcal{R}_3$ for the case containing at least one space derivative.

\subsection{Estimate of Term $\mathcal{R}_3$ with the $x_2$-Derivative}\label{sec.tan4.b}

In this subsection, we make the estimate of $\mathcal{R}_3$ defined in \eqref{Q.est1}
 for the case  when
$\beta_2\geq 1$ and $|\beta|\leq s$.

Computing from \eqref{varrho} that
\begin{align}
\notag
&\p_{F_{ij}} \varrho (\widebar{\bm{F}}^+ )\mathrm{D}_{\rm tan}^{\beta} F_{ij}^+\\
\label{varrho.id2}&\quad=
\left\{
\begin{aligned}
&-\varrho(\widebar{\bm{F}}^+ )^2  \mathrm{D}_{\rm tan}^{\beta} F_{22}^+       &\ & \textrm{if } d=2, \\[1mm]
&-\varrho(\widebar{\bm{F}}^+ )^2
\left( \widebar{F}_{33} \mathrm{D}_{\rm tan}^{\beta} F_{22}^+
+\widebar{F}_{22} \mathrm{D}_{\rm tan}^{\beta} F_{33}^+
 \right)     &\ & \textrm{if } d=3,
\end{aligned}
\right.
\end{align}
and using \eqref{psi.d2.id.2D}--\eqref{psi.d2.id}, we deduce
\begin{align}
\mathcal{R}_3=
\underbrace{2 \int_{\p\varOmega } [\widebar{F}_{11}]  \p_{F_{ij}}\varrho(\widebar{\bm{F}}^+)
\mathrm{D}_{\rm tan}^{\beta}  F_{ij}^+\mathrm{D}_{\rm tan}^{\beta}   \psi
}_{\widetilde{\mathcal{R}}_{31} +\widetilde{\mathcal{R}}_{32} }
+\underbrace{\int_{\p\varOmega }\underline{\mathring{\rm c}}_0
\mathrm{D}_{\rm tan}^{\beta} F_{ij}^{+} \mathrm{D}_{\rm tan}^{\beta} \psi
}_{\widetilde{\mathcal{R}}_{33} }  ,
\label{R3.est2}
\end{align}
where
\begin{align}
\notag  \widetilde{\mathcal{R}}_{31}&:=
\left\{
\begin{aligned}
&{    -2  [\widebar{F}_{11}]   \varrho(\widebar{\bm{F}}^+)^3  \int_{\p\varOmega }\mathrm{D}_{\rm tan}^{\beta-\bm{e}_2} F_{12}^+ \mathrm{D}_{\rm tan}^{\beta}  F_{22}^+}
\  && \textrm{if } d=2, \\
&-2  [\widebar{F}_{11}]   \varrho(\widebar{\bm{F}}^+)^2   \int_{\p\varOmega }\mathrm{D}_{\rm tan}^{\beta-\bm{e}_2} F_{12}^+
\Big(\mathrm{D}_{\rm tan}^{\beta}  F_{33}^+ + \frac{\widebar{F}_{33}}{\widebar{F}_{22}}\mathrm{D}_{\rm tan}^{\beta}  F_{22}^+\Big)
\  && \textrm{if } d=3,
\end{aligned}
\right.\\
\notag \widetilde{\mathcal{R}}_{32}&:=
 \int_{\p\varOmega }
\mathring{\rm c}_0  \mathrm{D}_{\rm tan}^{\beta-\bm{e}_2}
 \Big( \underline{\mathring{\rm c}}_1 W +\sum_{\ell=2}^d\mathring{\rm c}_0 R_{\ell}\Big)
 \mathrm{D}_{\rm tan}^{\beta}  F_{ij}^+   .
\end{align}
Similar to the derivation of estimates \eqref{R32a.e1}--\eqref{R32a.e0}, we can obtain
\begin{align}
  | \widetilde{\mathcal{R}}_{32}|+| \widetilde{\mathcal{R}}_{33}| \lesssim
  \big\|\underline{\mathring{\rm c}}_1\big\|_{H^{3}(\varOmega_t)}
\VERT  W\VERT_{s}^2+ \mathcal{M}_s(t).
\label{R3.est2b}
\end{align}
Utilizing inequality \eqref{trace.est2} leads to
\begin{align}
\notag |\widetilde{\mathcal{R}}_{31}|
\leq \;&2  [\widebar{F}_{11}]   \varrho(\widebar{\bm{F}}^+)^3
\| \mathrm{D}_{\rm tan}^{\beta-\bm{e}_2} F_{12}^+\|_{H^{1}(\varOmega) }
\| \mathrm{D}_{\rm tan}^{\beta-\bm{e}_2} F_{22}^+\|_{H^{1}(\varOmega) }\\
\leq \;&  [\widebar{F}_{11}]   \varrho(\widebar{\bm{F}}^+)^3
\| \mathrm{D}_{\rm tan}^{\beta-\bm{e}_2} (F_{12}^+,F_{22}^+)\|_{H^{1}(\varOmega) }^2
\qquad\qquad  \textrm{if } d=2.
\label{R3.est2c.2D}
\end{align}
Moreover, for $d=3$, we have
\begin{align}
\notag |\widetilde{\mathcal{R}}_{31}|
\leq \;&2  [\widebar{F}_{11}]   \varrho(\widebar{\bm{F}}^+)^2
\| \mathrm{D}_{\rm tan}^{\beta-\bm{e}_2} F_{12}^+\|_{H^{1}(\varOmega) }\\[1mm]
\notag &\times \Big( \| \mathrm{D}_{\rm tan}^{\beta-\bm{e}_2} F_{33}^+\|_{H^{1}(\varOmega) }
+\frac{\widebar{F}_{33}}{\widebar{F}_{22}} \| \mathrm{D}_{\rm tan}^{\beta-\bm{e}_2} F_{22}^+\|_{H^{1}(\varOmega) }  \Big) \\
\leq \;& [\widebar{F}_{11}]   \varrho(\widebar{\bm{F}}^+)^2
\Big(1+\frac{\widebar{F}_{33}^2}{\widebar{F}_{22}^2}\Big)^{1/2}\,
\| \mathrm{D}_{\rm tan}^{\beta-\bm{e}_2} (F_{12}^+,F_{22}^+,F_{33}^+)\|_{H^{1}(\varOmega) }^2.
\label{R3.est2c}
\end{align}

To estimate the terms on the right-hand side of \eqref{R3.est2c.2D}--\eqref{R3.est2c},
we compute from \eqref{eta.id0}--\eqref{zeta.id0} that
\begin{align}
\eta_i^{\pm}&=
\pm \widebar{{F}}_{11}^{\pm} \p_1 F_{i2}^{\pm}-\widebar{F}_{22}\p_2F_{i1}^{\pm} +\underline{\mathring{\rm c}}_1\mathrm{D}_xW
+\mathring{\rm c}_2W,
\label{eta.id2}\\[1mm]
 \zeta_i^{\pm}&=
\pm \widebar{{F}}_{11}^{\pm} \p_1 F_{i3}^{\pm}-\widebar{F}_{33}\p_3F_{i1}^{\pm} +\underline{\mathring{\rm c}}_1\mathrm{D}_xW
+\mathring{\rm c}_2W.
\label{zeta.id2}
\end{align}
By virtue of identities \eqref{eta.id2}--\eqref{zeta.id2}, estimates \eqref{eta.est}--\eqref{eta.est'}, and
\begin{align}
F_{11}^+=\frac{1}{\bar{\rho}^{+} \widebar{F}_{11}^+ }(W_1^+-W_{d+2}^+)+ \underline{\mathring{\rm c}}_1 W,
\label{F11.id}
\end{align}
 we obtain the following two assertions:
\begin{itemize}
\item If $d=2$, then
\begin{align*}
&\|\mathrm{D}_{\rm tan}^{\beta-\bm{e}_2} \mathrm{D}_x (F_{12}^+,\,F_{22}^+)\|_{L^2(\varOmega)}^2\\[1.5mm]
& \quad \leq \|\mathrm{D}_{\rm tan}^{\beta}  (F_{12}^+,\,F_{22}^+)\|_{L^2(\varOmega)}^2
+\frac{\widebar{F}_{22}^2}{(\bar{\rho}^{+})^2(\widebar{F}_{11}^+)^4}
\|\mathrm{D}_{\rm tan}^{\beta}  (W_1^+-W_4^+)\|_{L^2(\varOmega)}^2\\[0.5mm]
& \quad \quad\; +\frac{\widebar{F}_{22} ^2}{ (\widebar{F}_{11}^+)^2}
\|\mathrm{D}_{\rm tan}^{\beta}  F_{21}^+\|_{L^2(\varOmega)}^2
+C   \|\underline{\mathring{\rm c}}_1  \|_{L^{\infty}(\varOmega_T)} \VERT W\VERT_{s}^2
+ C \mathcal{M}_s(t) ,
\end{align*}
which, combined with \eqref{R3.est2c.2D}, leads to
\begin{align}
|\widetilde{\mathcal{R}}_{31}|\leq
\widebar{C}_0 \mathcal{E}_{\rm tan}^{\beta}(t)
+C   \|\underline{\mathring{\rm c}}_1  \|_{L^{\infty}(\varOmega_T)} \VERT W\VERT_{s}^2
+C  \mathcal{M}_s(t) ,
\label{R3.est2d.2D}
\end{align}
where
\begin{align}
\label{C0.def}&\widebar{C}_0:=
\begin{aligned}
\max( 1,\, \frac{ (\widebar{F}_{11}^+)^2}{\widebar{F}_{22}^2} )\frac{ [\widebar{F}_{11}]}{\widebar{F}_{11}^+}
\end{aligned}.
\end{align}
\item If $d=3$, then
\begin{align*}
&\|\mathrm{D}_{\rm tan}^{\beta-\bm{e}_2} \mathrm{D}_x (F_{12}^+,\,F_{22}^+,\,F_{33}^+)\|_{L^2(\varOmega)}^2\\[2mm]
& \leq \|\mathrm{D}_{\rm tan}^{\beta}  (F_{12}^+,\,F_{22}^+,\,F_{33}^+)\|_{L^2(\varOmega)}^2
+\|\mathrm{D}_{\rm tan}^{\beta-\bm{e}_2+\bm{e}_3}  (F_{12}^+,\,F_{22}^+,\,F_{33}^+)\|_{L^2(\varOmega)}^2
\\[1.5mm]
&\quad\; +\frac{\widebar{F}_{22}^2}{(\bar{\rho}^{+})^2(\widebar{F}_{11}^+)^4}
\|\mathrm{D}_{\rm tan}^{\beta}  (W_1^+-W_5^+)\|_{L^2(\varOmega)}^2
+\frac{\widebar{F}_{33}^2}{ (\widebar{F}_{11}^+)^2}
\|\mathrm{D}_{\rm tan}^{\beta-\bm{e}_2+\bm{e}_3}  F_{31}^+\|_{L^2(\varOmega)}^2\\[1mm]
&\quad\; +\frac{\widebar{F}_{22} ^2}{ (\widebar{F}_{11}^+)^2}
\|\mathrm{D}_{\rm tan}^{\beta}  F_{21}^+\|_{L^2(\varOmega)}^2
+C   \|\underline{\mathring{\rm c}}_1  \|_{L^{\infty}(\varOmega_T)} \VERT W\VERT_{s}^2
+ C \mathcal{M}_s(t) ,
\end{align*}
which, along with \eqref{R3.est2c}, yields
\begin{align}
\notag |\widetilde{\mathcal{R}}_{31}|\leq
\;& \widebar{C}_1 \mathcal{E}_{\rm tan}^{\beta}(t)
+\widebar{C}_2 \mathcal{E}_{\rm tan}^{\beta-\bm{e}_2+\bm{e}_3}(t) \\
&+C  \|\underline{\mathring{\rm c}}_1  \|_{L^{\infty}(\varOmega_T)} \VERT W\VERT_{s}^2
+ C \mathcal{M}_s(t) ,
\label{R3.est2d}
\end{align}
where
\begin{align}
\widebar{C}_1:= \,&
{ 
\begin{aligned}
\Big(1+\frac{\widebar{F}_{33}^2}{\widebar{F}_{22}^2}\Big)^{1/2}  
\max(1,\, \frac{\widebar{F}_{22}^2}{ (\widebar{F}_{11}^+)^2})
\frac{\widebar{F}_{11}^+[\widebar{F}_{11}]}{\widebar{F}_{22}\widebar{F}_{33}}
\end{aligned}
}
\label{C1.def}, \\[1mm]
\label{C2.def}
\widebar{C}_2:=\,&
{ 
\begin{aligned}
\Big( 1+\frac{\widebar{F}_{33}^2}{\widebar{F}_{22}^2}\Big)^{1/2}
\max( 1,\, \frac{\widebar{F}_{33}^2}{ (\widebar{F}_{11}^+)^2 })
\frac{ \widebar{F}_{11}^+[\widebar{F}_{11}]}{\widebar{F}_{22}\widebar{F}_{33}}
\end{aligned}
}.
\end{align}
\end{itemize}
Plugging estimates \eqref{R3.est2b}, \eqref{R3.est2d.2D}, and \eqref{R3.est2d} into \eqref{R3.est2},
and using \eqref{tan.est2}, \eqref{Q.est1}, and  \eqref{R2.est}, we deduce
\begin{align}
\notag \mathcal{E}_{\rm tan}^{\beta}(t)\leq
\,&
 C
\|\underline{\mathring{\rm c}}_1  \|_{H^{3}(\varOmega_T)}   \VERT W(t)\VERT_{s}^2
+C \mathcal{M}_s(t)  \\[2mm]
&+\left\{
\begin{aligned}
& \widebar{C}_0\, \mathcal{E}_{\rm tan}^{\beta}(t)  &\qquad\quad &\textrm{if } d=2,\\
&  \widebar{C}_1\, \mathcal{E}_{\rm tan}^{\beta}(t)+ \widebar{C}_2\, \mathcal{E}_{\rm tan}^{\beta-\bm{e}_2+\bm{e}_3}(t)  & &\textrm{if } d=3.
\end{aligned}
\right.
\label{E.tan.est4b}
\end{align}

For $d=3$,
it follows from \eqref{thm.H1} that $\widebar{C}_1<1$,
so that estimate \eqref{E.tan.est4b} implies
\begin{align}
  \mathcal{E}_{\rm tan}^{\beta}(t)\leq
C
\|\underline{\mathring{\rm c}}_1  \|_{H^{3}(\varOmega_T)}    \VERT W(t)\VERT_{s}^2
 +C \mathcal{M}_s(t) +\frac{\widebar{C}_2}{1-\widebar{C}_1} \mathcal{E}_{\rm tan}^{\beta-\bm{e}_2+\bm{e}_3}(t)
\label{E.tan.est4}
\end{align}
for all $\beta\in\mathbb{N}^3$ with $|\beta|\leq s$ and $\beta_2\geq 1$.

\vspace*{5mm}
\noindent {\bf Proof of Proposition \ref{lem.tan} for $d=2$}.\quad
In the two-dimensional case,
if  \eqref{thm.H1} holds, then $\widebar{C}_0<1$.
From \eqref{E.tan.est4b},  we have
\begin{align}
\mathcal{E}_{\rm tan}^{\beta}(t)\lesssim
\|\underline{\mathring{\rm c}}_1  \|_{H^{3}(\varOmega_T)}  \VERT W(t)\VERT_{s}^2
+\mathcal{M}_s(t),
\label{E.tan.est4.2D}
\end{align}
for  all $\beta\in\mathbb{N}^2$ with $|\beta|\leq s$ and $\beta_2\geq 1$.
Combining  \eqref{E.tan.est4.2D} and \eqref{E.tan.est3},
we can conclude \eqref{E.tan.est4.2D}  for all $\beta \in\mathbb{N}^2$ with $|\beta|\leq s$.
The proof
for case $d=2$ is complete.
\qed

\subsection{Estimate of Term $\mathcal{R}_3$ with the $x_3$-Derivative
}\label{sec.tan4.c}
For the three-dimensional case ($d=3$), 
in order to prove \eqref{tan.est}, it suffices to obtain the estimate of $\mathcal{R}_3$ defined in \eqref{Q.est1}  for $\beta_3\geq 1$ and $|\beta|\leq s$.
For this purpose,
we utilize \eqref{psi.d3.id} and \eqref{varrho.id2}  to  deduce
\begin{align}
\mathcal{R}_3=
\underbrace{2 \int_{\p\varOmega } [\widebar{F}_{11}]  \p_{F_{ij}}\varrho(\widebar{\bm{F}}^+)
	\mathrm{D}_{\rm tan}^{\beta}  F_{ij}^+\mathrm{D}_{\rm tan}^{\beta}   \psi
}_{\widehat{\mathcal{R}}_{31} +\widehat{\mathcal{R}}_{32} }
+\underbrace{\int_{\p\varOmega }\underline{\mathring{\rm c}}_0
	\mathrm{D}_{\rm tan}^{\beta} F_{ij}^{+} \mathrm{D}_{\rm tan}^{\beta} \psi
}_{\widehat{\mathcal{R}}_{33} }  ,
\label{R3.est3}
\end{align}
where
\begin{align}
\notag  \widehat{\mathcal{R}}_{31}&:=
-2  [\widebar{F}_{11}]   \varrho(\widebar{\bm{F}}^+)^2   \int_{\p\varOmega }\mathrm{D}_{\rm tan}^{\beta-\bm{e}_3} F_{13}^+
\Big(\frac{\widebar{F}_{22}}{\widebar{F}_{33}} \mathrm{D}_{\rm tan}^{\beta}  F_{33}^+ + \mathrm{D}_{\rm tan}^{\beta}  F_{22}^+\Big),\\
\notag \widehat{\mathcal{R}}_{32}&:=
 \int_{\p\varOmega }
\mathring{\rm c}_0  \mathrm{D}_{\rm tan}^{\beta-\bm{e}_3}
\Big( \underline{\mathring{\rm c}}_1 W +\sum_{\ell=2}^d\mathring{\rm c}_0 R_{\ell}\Big)
\mathrm{D}_{\rm tan}^{\beta}  F_{ij}^+   .
\end{align}
Similar to the derivation of estimates \eqref{R32a.e1}--\eqref{R32a.e0}, we can deduce
\begin{align}
| \widehat{\mathcal{R}}_{32}|+| \widehat{\mathcal{R}}_{33}|
\lesssim
  \big\|\underline{\mathring{\rm c}}_1\big\|_{H^{3}(\varOmega_t)}
\VERT W(t) \VERT_{s}^2+\mathcal{M}_s(t).
\label{R3.est3b}
\end{align}

In view of inequality \eqref{trace.est2}, we have
\begin{align}
\notag |\widehat{\mathcal{R}}_{31}|
\leq \;&2  [\widebar{F}_{11}]   \varrho(\widebar{\bm{F}}^+)^2
\| \mathrm{D}_{\rm tan}^{\beta-\bm{e}_3} F_{13}^+\|_{H^{1}(\varOmega) }\\
\notag &\times \Big( \frac{\widebar{F}_{22}}{\widebar{F}_{33}} \| \mathrm{D}_{\rm tan}^{\beta-\bm{e}_3} F_{33}^+\|_{H^{1}(\varOmega) }
+\| \mathrm{D}_{\rm tan}^{\beta-\bm{e}_3} F_{22}^+\|_{H^{1}(\varOmega) }  \Big)\\
\leq \;&[\widebar{F}_{11}]   \varrho(\widebar{\bm{F}}^+)^2
\Big( 1+\frac{\widebar{F}_{22}^2}{\widebar{F}_{33}^2}\Big)^{1/2}
\| \mathrm{D}_{\rm tan}^{\beta-\bm{e}_3} (F_{13}^+,F_{22}^+,F_{33}^+)\|_{H^{1}(\varOmega) }^2.
\label{R3.est3c}
\end{align}
Use identities \eqref{eta.id2}--\eqref{F11.id} and estimates \eqref{eta.est}--\eqref{eta.est'} to derive
\begin{align*}
&\|\mathrm{D}_{\rm tan}^{\beta-\bm{e}_3}  \mathrm{D}_x (F_{13}^+,\,F_{22}^+,\,F_{33}^+)\|_{L^2(\varOmega)}^2\\[3mm]
&\quad  \leq
\|\mathrm{D}_{\rm tan}^{\beta}   (F_{13}^+,\,F_{22}^+,\,F_{33}^+)\|_{L^2(\varOmega)}^2
+\|\mathrm{D}_{\rm tan}^{\beta-\bm{e}_3+\bm{e}_2}   (F_{13}^+,\,F_{22}^+,\,F_{33}^+)\|_{L^2(\varOmega)}^2\\[1.5mm]
&\quad\quad\,\, +\frac{\widebar{F}_{33}^2}{(\bar{\rho}^{+})^2(\widebar{F}_{11}^+)^4}
\|\mathrm{D}_{\rm tan}^{\beta}  (W_1^+-W_5^+)\|_{L^2(\varOmega)}^2
+\frac{\widebar{F}_{22} ^2}{ (\widebar{F}_{11}^+)^2}
\|\mathrm{D}_{\rm tan}^{\beta-\bm{e}_3+\bm{e}_2}  F_{21}^+\|_{L^2(\varOmega)}^2
\\[1mm]
&\quad\quad\,\, +\frac{\widebar{F}_{33}^2}{ (\widebar{F}_{11}^+)^2}
\|\mathrm{D}_{\rm tan}^{\beta}  F_{31}^+\|_{L^2(\varOmega)}^2
+ C \|\underline{\mathring{\rm c}}_1  \|_{L^{\infty}(\varOmega_t)} \VERT W\VERT_{s}^2
+C  \mathcal{M}_s(t),
\end{align*}
which, along with \eqref{tan.est2}, \eqref{Q.est1}, \eqref{R2.est}, \eqref{R3.est3}, 
and \eqref{R3.est3c}--\eqref{R3.est3d}, yields
\begin{align}
\notag \mathcal{E}_{\rm tan}^{\beta}(t)\leq
\;& \widebar{C}_3\, \mathcal{E}_{\rm tan}^{\beta}(t)
+\widebar{C}_4\, \mathcal{E}_{\rm tan}^{\beta-\bm{e}_3+\bm{e}_2}(t)
\\
&+  C  \|\underline{\mathring{\rm c}}_1  \|_{L^{\infty}(\varOmega_T)} \VERT W\VERT_{s}^2
+ C \mathcal{M}_s(t) ,
\label{R3.est3d}
\end{align}
where
\begin{align}
\widebar{C}_3:=\,&
{ 	\begin{aligned}
\Big( 1+\frac{\widebar{F}_{22}^2}{\widebar{F}_{33}^2}\Big)^{1/2}
\max( 1,\, \frac{\widebar{F}_{33}^2}{ (\widebar{F}_{11}^+)^2 } )
\frac{ \widebar{F}_{11}^+[\widebar{F}_{11}]}{\widebar{F}_{22}\widebar{F}_{33}}
\end{aligned}}
\label{C3.def}
,\\[1mm]
\label{C4.def}\widebar{C}_4:=\,&
{ 
\begin{aligned}
\Big( 1+\frac{\widebar{F}_{22}^2}{\widebar{F}_{33}^2}\Big)^{1/2}
\max( 1,\, \frac{\widebar{F}_{22}^2}{ (\widebar{F}_{11}^+)^2})
\frac{ \widebar{F}_{11}^+[\widebar{F}_{11}]}{\widebar{F}_{22}\widebar{F}_{33}}
\end{aligned}}
.
\end{align}
Noting from \eqref{thm.H1} that $\widebar{C}_3<1$,
we have
\begin{align}
 \mathcal{E}_{\rm tan}^{\beta}(t)\leq
C\|\underline{\mathring{\rm c}}_1\|_{H^{3}(\varOmega_T)} \VERT W(t)\VERT_{s}^2
+C  \mathcal{M}_s(t) +\frac{\widebar{C}_4}{1-\widebar{C}_3} \mathcal{E}_{\rm tan}^{\beta-\bm{e}_3+\bm{e}_2}(t),
\label{E.tan.est5}
\end{align}
for all  $\beta\in\mathbb{N}^3$ with $\beta_3\geq 1$ and $|\beta|\leq s$.

\vspace*{5mm}
\noindent {\bf Proof of Proposition \ref{lem.tan} for $d=3$}.\quad
Combine \eqref{E.tan.est4}  and \eqref{E.tan.est5} to infer
\begin{align}
\notag \mathcal{E}_{\rm tan}^{\beta}(t)\leq
\,&
C
 \|\underline{\mathring{\rm c}}_1\|_{H^{ 3}(\varOmega_T)} \VERT W(t)\VERT_{s}^2 +C \mathcal{M}_s(t) +\frac{\widebar{C}_2 \widebar{C}_4}{(1-\widebar{C}_1)(1-\widebar{C}_3)}\, \mathcal{E}_{\rm tan}^{\beta}(t),
\notag
\end{align}
which yields
\begin{align}
 \mathcal{E}_{\rm tan}^{\beta}(t)\lesssim
\|\underline{\mathring{\rm c}}_1\|_{H^{ 3}(\varOmega_T)}\VERT W(t)\VERT_{s}^2
+\mathcal{M}_s(t)
\label{E.tan.est6}
\end{align}
for all $\beta\in\mathbb{N}^3$ with $\beta_3\geq 1$ and $|\beta|\leq s$,
provided
\begin{align} \notag
\widebar{C}_2 \widebar{C}_4<(1-\widebar{C}_1)(1-\widebar{C}_3).
\end{align}
This last condition is equivalent to \eqref{thm.H1} because of $\widebar{C}_1\widebar{C}_3=\widebar{C}_2\widebar{C}_4$.
Combining \eqref{E.tan.est3}, \eqref{E.tan.est4},  and \eqref{E.tan.est6}, 
we  deduce  \eqref{E.tan.est6} for all $\beta\in\mathbb{N}^3$ with $|\beta|\leq s$.
Therefore, we complete the proof for $d=3$.
\qed


\section{Proof of Theorem \ref{thm2}}\label{sec.Proof}

This subsection is dedicated to the proof of the main theorem of this paper, Theorem \ref{thm2}.

Combine estimates \eqref{normal.est}--\eqref{normal.est'} and \eqref{tan.est} to obtain
\begin{align}\notag
\VERT W(t)\VERT_s^2
\lesssim
\|\underline{\mathring{\rm c}}_1\|_{H^{3}(\varOmega_T)}
\VERT W(t)\VERT_{s}^2 + \mathcal{M}_s(t),
\end{align}
where $\mathcal{M}_s(t)$ is defined by \eqref{M.cal}.
Thanks to \eqref{bas.bound},
we apply the Moser-type calculus inequality \eqref{Moser2}
and take ${K}>0$ sufficiently small to obtain
\begin{align} \label{W.est1}
\VERT W(t)\VERT_s^2
\lesssim   \mathcal{M}_s(t).
\end{align}

It follows from definitions \eqref{varPsi.def}--\eqref{VERT.norm} that
\begin{align} \notag
\VERT \varPsi(t)\VERT_s^2
=\sum_{k+|\beta|\leq s}\int_0^{\infty} |\p_1^k\chi(\pm x_1)|^2\mathrm{d}x_1  \int_{\mathbb{R}^{d-1} } | \mathrm{D}_{\rm tan}^{\beta} \psi(t,x')|^2\mathrm{d} x',
\end{align}
which, along with  \eqref{chi},  leads to
\begin{align} \label{varPsi.est0}
\VERT \varPsi(t)\VERT_s^2\sim \sum_{|\beta|\leq s }\|\mathrm{D}_{\rm tan}^{\beta} \psi(t)\|_{L^2(\p\varOmega)}^2.
\end{align}
Integrate \eqref{varPsi.est0} over $(-\infty,T)$ to obtain
\begin{align} \label{varPsi.est}
\|\varPsi\|_{H^{s}(\varOmega_T)} \sim \|\psi\|_{H^{s}(\omega_T)}.
\end{align}
Similarly, we see from \eqref{bas.relation} that
\begin{align}  \label{tame.est2b}
\|\mathring{\varPsi}\|_{H^{s}(\varOmega_T)} \sim \|\mathring{\varphi}\|_{H^{s}(\omega_T)}.
\end{align}
In view of \eqref{MTT.inequ}, \eqref{Rj.est}, \eqref{key4'}, and \eqref{W.est1},
we have
\begin{align}
\notag \sum_{|\beta|\leq s }\|\mathrm{D}_{\rm tan}^{\beta} \psi(t)\|_{L^2(\p\varOmega)}^2
&\lesssim  \| \psi\|_{H^s(\omega_t )}^2
+\sum_{|\beta|=s-1}\Big\| \mathrm{D}_{\rm tan}^{\beta} \Big(\mathring{\rm c}_1 W
+\sum_{j=2}^d\underline{\mathring{\rm c}}_0 R_j \Big)\Big\|_{L^2(\p\varOmega)}^2\\
\label{psi.est}
& \lesssim   \VERT W(t)\VERT_{s}^2
+ \mathcal{M}_s(t)\lesssim \mathcal{M}_s(t),
\end{align}
which, along with  \eqref{varPsi.est0}, yields
\begin{align}
\VERT (W,\varPsi)(t)\VERT_1^2
\lesssim \;&    \int_0^t \VERT (W,\varPsi)(\tau)\VERT_1^2\,\mathrm{d}\tau
+ \| \tilde{f}\|_{H^{1}({\varOmega_t})}^2 ,\notag \\
\notag
\VERT (W,\varPsi)(t)\VERT_s^2
\lesssim  \;&    \int_0^t \VERT (W,\varPsi)(\tau)\VERT_s^2\,\mathrm{d}\tau
+ \| \tilde{f}\|_{H^{s}({\varOmega_t})}^2\\
& +
\| (\mathring{V},  \mathring{\varPsi})\|_{H^{s+2}(\varOmega_T)}^2
\|(W,\,\varPsi,\,\tilde{f}) \|^2_{H^{3}(\varOmega_t)}\quad {\textrm{for }s\geq 3.}
\notag
\end{align}
Applying Gr\"{o}nwall's inequality to the estimates above implies
\begin{align}
 \label{tame.est3'} \VERT (W,\varPsi)(t)\VERT_1^2
\lesssim  \; &  
  \| \tilde{f}\|_{H^{1}({\varOmega_t})}^2,  \\[1mm]
  \VERT (W,\varPsi)(t)\VERT_s^2
\lesssim \; &   
\| \tilde{f}\|_{H^{s}({\varOmega_t})}^2
+\| (\mathring{V},  \mathring{\varPsi})\|_{H^{s+2}(\varOmega_T)}^2
\|(W,\,\varPsi,\,\tilde{f}) \|^2_{H^{3}(\varOmega_t)}
\ \ {\textrm{for }s\geq 3.}
\label{tame.est3}
\end{align}
Since $W$ and $\psi$ vanish in the past,
we integrate \eqref{tame.est3'}--\eqref{tame.est3} over $[0,T]$  to deduce
\begin{align}
\label{tame.est3b'}\|(W,\varPsi) \|_{H^1(\varOmega_T)}^2
\lesssim  \;&   
  \| \tilde{f}\|_{H^{1}({\varOmega_T})}^2,\\[1mm]
   \|(W,\varPsi) \|_{H^s(\varOmega_T)}^2
\lesssim  \;&  
\| \tilde{f}\|_{H^{s}({\varOmega_T})}^2
  +\| (\mathring{V},  \mathring{\varPsi})\|_{H^{s+2}(\varOmega_T)}^2
\|(W, \varPsi, \tilde{f}) \|^2_{H^{3}(\varOmega_T)}
\ \  {\textrm{for }s\geq 3.}
 \label{tame.est3b}
\end{align}
Utilizing \eqref{tame.est3b} with $s=3$ and \eqref{bas.bound},
we take ${K}>0$ sufficiently small to derive
\begin{align}\label{tame.est3c}
 \|(W,\varPsi) \|_{H^3(\varOmega_T)}^2
 \lesssim \| \tilde{f}\|_{H^{3}({\varOmega_T})} ^2.
\end{align}
Insert \eqref{tame.est3c} into \eqref{tame.est3b} to find
\begin{align}
 \|(W,\varPsi) \|_{H^s(\varOmega_T)}^2 \leq  C(K_0,T)
\Big\{\| \tilde{f}\|_{H^{s}({\varOmega_T})}^2 + \| (\mathring{V},  \mathring{\varPsi})\|_{H^{s+2}(\varOmega_T)}^2
\|\tilde{f} \|^2_{H^{3}(\varOmega_T)}\Big\}. \label{tame.est3d}
\end{align}

Recalling $V^{\pm}=\mathring{J}_{\pm} W^{\pm}$ ({\it cf.}\;\eqref{W.def.d}),
we employ the Moser-type calculus inequality \eqref{Moser3}, \eqref{MTT.inequ},
and the Sobolev embedding theorem
to obtain
\begin{align}
\|V \|_{H^s(\varOmega_T)}^2
\lesssim \; &\sum_{|\alpha|\leq s }\Big( \|\mathring{J} \mathrm{D}^{\alpha}W \|_{L^2(\varOmega_T)}^2
+ \|[ \mathrm{D}^{\alpha},\,\mathring{J}]W \|_{L^2(\varOmega_T)}^2
\Big)\notag \\
\lesssim \;&    \|W \|_{H^s(\varOmega_T)}^2
+ \| (\mathring{V},  \mathring{\varPsi})\|_{H^{s+1}(\varOmega_T)}^2  \|W \|_{H^{3}(\varOmega_T)}^2 .
\label{tame.est4}
\end{align}
Combining \eqref{varPsi.est} with \eqref{tame.est3c}--\eqref{tame.est4} yields
\begin{align}
\notag &  \|V\|_{H^s(\varOmega_T)}^2+ \|\psi \|_{H^s(\omega_T)}^2 \\&
\qquad \leq  C(K_0,T)
\Big\{ \| \tilde{f}\|_{H^{s}({\varOmega_T})}^2
+\| (\mathring{V},  \mathring{\varPsi})\|_{H^{s+2}(\varOmega_T)}^2
\|\tilde{f} \|^2_{H^{3}(\varOmega_T)}
\Big\}. \label{tame.est5}
\end{align}
Thanks to \eqref{Rj.est}, \eqref{key4'},  and \eqref{tame.est5},
we can obtain
\begin{align}
\notag &  \|V\|_{H^s(\varOmega_T)}^2+ \|\psi \|_{H^{s+1/2}(\omega_T)}^2 \\
&\qquad\leq  C(K_0,T)
\Big\{ \| \tilde{f}\|_{H^{s}({\varOmega_T})}^2
+\| (\mathring{V},  \mathring{\varPsi})\|_{H^{s+2}(\varOmega_T)}^2
\|\tilde{f} \|^2_{H^{3}(\varOmega_T)}
\Big\}. \label{tame.est6}
\end{align}

It follows from \eqref{f.tilde} that
\begin{align*}
 \| \tilde{f}\|_{H^{m}({\varOmega_T})}^2
 \lesssim  \| {f}\|_{H^{m}({\varOmega_T})}^2
 + \| \mathring{\rm c}_1 \mathrm{D}V_{\natural} \|_{H^{m}({\varOmega_T})}^2
  + \| \mathring{\rm c}_1 V_{\natural} \|_{H^{m}({\varOmega_T})}^2.
\end{align*}
By virtue of \eqref{V.natural.est},
we employ the Moser-type calculus inequality \eqref{Moser3} and the Sobolev embedding theorem to obtain
\begin{align*}
 \| \tilde{f}\|_{H^{m}({\varOmega_T})}^2
 \lesssim \;&  \| f\|_{H^{m}({\varOmega_T})}^2+
 \|g\|_{H^{m+1/2}({\omega_T})}^2  +  \| (\mathring{V},  \mathring{\varPsi})\|_{H^{m+1}(\varOmega_T)}^2\|g\|_{H^{7/2}({\omega_T})}^2.
\end{align*}
Insert the estimate with $m=s$ and $m=3$ above into \eqref{tame.est6}
and use \eqref{tame.est2b} to deduce the tame estimate \eqref{thm2.est}.
Moreover, we can easily derive \eqref{thm2.est2} from \eqref{tame.est3b'}.
This completes the proof of Theorem \ref{thm2}.


\medskip
 
\begin{appendices}
\section[Proof of Proposition \ref{pro1.1}]{Proof of Proposition \ref{pro1.1} } \label{App.A}
Assume that $[{ S }]=0$ on $\varGamma(t)$. Taking the scalar product of the last identity in \eqref{BC.E.general}  with $N$  and utilizing \eqref{inv3.E} yield
\begin{align*}
&|N|^2\left(p(\rho^+,S^+)-p(\rho^-,S^+)\right)\\
&\quad =|N|^2[p]= \rho^+ F_{\ell N}^+[F_{\ell N}]
 =[\rho F_{\ell N}F_{\ell N}]=\sum_{j=1}^{d}(\rho^+ F_{\ell N}^+)^2[ {\rho}^{-1}].
\end{align*}
Then we infer from \eqref{c.def} and \eqref{F.N} that
$$
[\rho]=[p]=0,
$$
which, combined with \eqref{BC.E.general}, gives
\begin{align} \label{p1.rem1}
  F_{\ell N}^+[F_{\ell}]=0.
\end{align}
Plug \eqref{inv3.E}  into \eqref{inv4.E} to obtain
\begin{align} \label{p3.rem1}
F_{kN}^{+}[F_{ij}]-F_{jN}^+[F_{ik}]=0 \qquad\,\,\textrm{for $i,j,k=1,\ldots,d$}.
\end{align}

{For $d=2$}, from \eqref{p1.rem1}--\eqref{p3.rem1}, we have
\begin{align*}
(F_{1N}^{+})^2[F_{i2}]+(F_{2N}^{+})^2[F_{i2}]
=F_{2N}^{+} \left( F_{1N}^{+}[F_{i1}]+F_{2N}^{+}[F_{i2}] \right)=0,
\end{align*}
which, along with \eqref{F.N}, yields $[F_{i2}]=0$ for $i=1,2$. Then we utilize \eqref{p3.rem1} again  to obtain
$[\bm{F}]=0$ on $\varGamma(t)$.

{For $d=3$}, relations \eqref{p3.rem1} are equivalent to
\begin{align*}
( F_{1N}^+,\; F_{2N}^+,\;  F_{3N}^+)^{\mathsf{T}}\times
([F_{i1}] ,\;  [F_{i2}] ,\;   [F_{i3}])^{\mathsf{T}}=0\qquad\,\textrm{for $i=1,2,3$},
\end{align*}
which implies
\begin{align}\label{p2.rem1}
[F_{ij}] =\omega_i   F_{jN}^+
\end{align}
for some scalar functions $\omega_i$ and for all $i,j=1,2,3$.
We plug \eqref{p2.rem1} into \eqref{p1.rem1} and utilize \eqref{F.N} to deduce that
$\omega_i\equiv 0$ for all $i=1,\ldots,d$.
Then it follows from \eqref{p2.rem1} that $[\bm{F}]=0$ on $\varGamma(t)$.

In view of the second condition in \eqref{BC.E.general},
we find that $[U]=0$ on $\varGamma(t)$, {\it i.e.}, solution $U$ is continuous across front $\varGamma(t)$.
Therefore, there is no thermoelastic contact discontinuity for the case $[S]=0$.
This completes the proof of Proposition \ref{pro1.1}.

 \section[Proof of Proposition \ref{pro1}]{Proof of Proposition \ref{pro1}} \label{App.B}

 We omit indices $\pm$ in several places below to avoid overloaded expressions.

 \vspace*{2mm}
 \noindent\textbf{1}:\;{\it Proof of \eqref{rho.relation}}.\ \
 In the original variables,
 we see from \eqref{sym.form.3} that
 \begin{align*}
 (\p_t+v_{\ell} \p_{\ell}) \det\bm{F}&=\frac{\p \det\bm{F}}{\p F_{ij}} (\p_t+v_{\ell} \p_{\ell})  F_{ij}
=\det\bm{F} (\bm{F}^{-1})_{ji} F_{\ell j}\p_{\ell} v_i 
\\ & =\det\bm{F} \delta_{\ell, i} \p_{\ell} v_i=\det\bm{F} \p_{i} v_i,
 \end{align*}
which, combined with the first equation in \eqref{conser.laws}, yields
$$
  (\p_t+v_{\ell} \p_{\ell}) (\rho\det\bm{F})=0.
$$
After transformation \eqref{transform}, we find
$$
(\p_t+w_{\ell} \p_{\ell})(\rho\det \bm{F})=0, 
$$
where
\begin{align*}
 w_1:=\frac{1}{\p_1 \varPhi}\Big(v_1-\p_t\varPhi -\sum_{j=2}^{d}v_j \p_j\varPhi \Big),
 \qquad
 w_i:=v_i  \quad\textrm{for $i=2,\cdots,d$}.
 \end{align*}
Since $w_1|_{x_1=0}=0$ resulting from \eqref{TCD0.b},
we can obtain  identity \eqref{rho.relation} by the standard energy method.

\vspace*{2mm}
\noindent\textbf{2}:\;{\it Proof of \eqref{inv1}}.\ \
A straightforward calculation shows that solutions of \eqref{vec.form} satisfy (see, e.g., the proof of {\sc Qian--Zhang} \cite[Proposition 1]{QZ10MR2729321})
 \begin{align*}
 (\partial_t+v_{\ell} \p_{\ell})( {F}_{\ell k}\partial_{\ell}{F}_{ij}-{F}_{\ell j}\partial_{\ell}{F}_{i k})
 =\partial_{m }v_i({F}_{\ell k}\partial_{\ell}{F}_{m  j}-{F}_{\ell j}\partial_{\ell}{F}_{m  k}).
 \end{align*}
 After transformation \eqref{transform}, we have
 \begin{align*}
 (\partial_t+w_{\ell} \p_{\ell})M_{k,i,j}=\partial_{m }^{\varPhi}v_i M_{k,m, j}
 \end{align*} 
 with $M_{k,i,j}:={F}_{\ell k}\partial_{\ell}^{\varPhi}{F}_{ij}-{F}_{\ell j}\partial_{\ell}^{\varPhi}{F}_{ik}$.
 Here we recall  the differentials with respect to \eqref{transform} from definition \eqref{differential}.
 Similar to the proof of {\sc Hu--Wang} \cite[Lemma\;A.2]{HW11MR2737829}, we can use integration by parts and $w_1|_{x_1=0}=0$ to obtain \eqref{inv1}.

\vspace*{3mm}
 \noindent\textbf{3}:\;{\it Proof of \eqref{inv3} and \eqref{inv5}}.\ \
In the original variables,  system \eqref{sym.form}  gives
 \begin{align}
 \label{app.A1}&(\partial_t+v_{\ell} \p_{\ell})(\rho {F}_{ij})+\rho {F}_{ij} {\p_{\ell} v_{\ell}}-\rho {F}_{\ell j}\p_{\ell} v_i=0.
 \end{align}
After transformation \eqref{transform}, equations \eqref{app.A1}  become
 \begin{align}
 \label{app.A2}(\partial_t+w_{\ell} \p_{\ell} )(\rho {F}_{ij})+\rho {F}_{ij} \partial_{\ell }^{\varPhi} v_{\ell}-\rho {F}_{\ell j} \partial_{\ell}^{\varPhi} v_i=0.
 \end{align}
 By virtue of \eqref{TCD0.b}, we have
 \begin{align}\notag
 (\partial_t+w_{\ell} \p_{\ell})\p_i\varphi=\p_i v\cdot N \qquad\,\, \textrm{on $\p\varOmega$, for $i=2,\ldots,d$}. 
 \end{align}
 Then it follows from the restriction of \eqref{app.A2}  on $\p\varOmega$ that
 \begin{align}\label{app.A3}
 (\partial_t+w_{\ell} \p_{\ell} )(\rho {F}_{jN})+\rho {F}_{jN} \sum_{\ell=2}^d \p_{\ell} v_{\ell}=0\qquad\, \textrm{on $\p\varOmega$}.
 \end{align}
 Since $w_1|_{x_1=0}=0$ and $[v]=0$,  we can derive \eqref{inv3} and \eqref{inv5} by employing the method of characteristics.

 \vspace*{3mm}
\noindent\textbf{4}:\;{\it Proof of \eqref{inv4}}.\ \
It follows from  \eqref{app.A3} that
\begin{align}
\notag &(\partial_t+w_{\ell} \p_{\ell} )(\rho {F}_{kN} F_{ij} -\rho F_{jN} F_{ik})
-\rho {F}_{kN} (\partial_t+w_{\ell} \p_{\ell} ) F_{ij}\\
\notag &\quad +\rho {F}_{jN} (\partial_t+w_{\ell} \p_{\ell} ) F_{ik}+
\sum_{\ell=2}^d \p_{\ell} v_{\ell} (\rho {F}_{kN}F_{ij} -\rho F_{jN}F_{ik})=0 
\qquad \textrm{on $\p \varOmega$}.
\end{align}
Since
 \begin{align*}
 (\partial_t+w_{\ell} \p_{\ell} ) F_{ij}=F_{\ell j}\p_{\ell}^{\varPhi}v_i=
 \frac{\p_1 v_i}{\p_1 \varPhi} F_{jN}+\sum_{\ell=2}^d F_{\ell j}\p_{\ell} v_i,
 \end{align*}
we have
 \begin{align}
 \notag (\partial_t+w_{\ell} ^+\p_{\ell} ) [I_{k,i,j}]+\sum_{\ell=2}^d \p_{\ell} v_i^+ [ I_{j,\ell,k}]
+  \sum_{\ell=2}^d \p_{\ell} v_{\ell}^+  [I_{k,i,j}]=0 \qquad \textrm{on $\p \varOmega$},
 \end{align}
 for $I_{k,i,j}:=\rho {F}_{kN} F_{ij} -\rho F_{jN}  F_{ik}$.
 Since \eqref{inv4} holds at the initial time, {\it i.e.}, $[I_{k,i,j}]=0$ at $t=0$ for $i,j,k=1,\ldots,d$,
 we employ the standard argument of the energy method to derive that \eqref{inv4} is satisfied for all $t\in[0,T]$.

 \vspace*{3mm}
\noindent\textbf{5}:\;{\it Proof of \eqref{inv2}}.\ \
It suffices to prove \eqref{inv2.E} in the original variables. 
We note that \eqref{rho.relation.E}--\eqref{inv1.E} hold  in virtue of \eqref{rho.relation}--\eqref{inv1}
so that
\begin{align*}
\p_{\ell}(\rho F_{\ell k})&= \p_{\ell}((\det \bm{F})^{-1} F_{\ell k})\\
&=(\det \bm{F})^{-1} \p_{\ell}  F_{\ell k}-  (\det \bm{F})^{-2} F_{\ell k}\frac{\p \det \bm{F}}{\p F_{i j}} \p_{\ell} F_{i j}\\
&=(\det \bm{F})^{-1} \left( \p_{\ell}  F_{\ell k}-  (\bm{F}^{-1})_{j i} F_{\ell k}  \p_{\ell} F_{i j} \right)\\[1mm]
&=(\det \bm{F})^{-1} \left( \p_{\ell}  F_{\ell k}-   (\bm{F}^{-1})_{ j i}  F_{\ell j}    \p_{\ell} F_{i k} \right)\\[1mm]
&=(\det \bm{F})^{-1} \left( \p_{\ell}  F_{\ell k}-   \delta_{\ell,i} \p_{\ell} F_{i k} \right)=0.
\end{align*}
This completes the proof of Proposition \ref{pro1}.
\end{appendices}



\bigskip
\noindent {\bf Compliance with ethical standards}

\vspace*{2mm}
\noindent {\bf Conflict of interest}  The authors declare that they have no conflict of interest.

{  \small

}
\end{document}